\documentclass{article}

\usepackage{amsfonts,amssymb, latexsym, amsmath, amsthm,mathrsfs, verbatim,geometry}

\usepackage{slashed}
\usepackage{amsfonts}
\usepackage{url,color}
\usepackage{enumerate}
\usepackage{esint}
\usepackage{pifont}
\usepackage{upgreek}
\usepackage{times}
\usepackage{calligra}
\usepackage{graphicx}
\usepackage{caption}
\usepackage{amscd}
\usepackage{amsthm}
\usepackage{amsmath,bm}
\usepackage{amssymb}
\usepackage{comment}
\usepackage{tikz}
\usepackage[utf8]{inputenc}
\usepackage{gensymb}
\usepackage{pifont}
\usepackage{autobreak}
\usepackage{color}
\usepackage{verbatim}
\usepackage{systeme}
\usepackage[font=scriptsize]{caption}
\usepackage[ngerman, english]{babel}
\usepackage[title,toc,page]{appendix}
\usepackage{subfig}
\usepackage{tikz}
\usetikzlibrary{calc}
\usetikzlibrary{calc}
\usetikzlibrary{arrows.meta}
\usetikzlibrary{hobby}
\usepackage{tkz-euclide}
\usepackage{bbm}

\newcommand{\norm}[1]{\left\lVert#1\right\rVert}

\newcommand{\R}{\mathbb{R}}
\newcommand{\ga}{\gamma}
\newcommand{\la}{\lambda}

\newcommand{\al}{\alpha}
\newcommand{\pa}{\partial}

\newcommand\eps{\varepsilon}

\def\be{\begin{equation}}
\def\ee{\end{equation}}
\def\beqa{\begin{equation}\begin{aligned}}
\def\eeqa{\end{aligned}\end{equation}}

\def\espz{Z}
\def\espw{W}
\def\mz{M_z}
\def\mw{M_w}
\def\lz{\mathcal L_z}
\def\lw{\mathcal L_w}
\def\nz{\mathcal N_z}
\def\nw{\mathcal N_w}
\def\nmz{\mathcal T_z}
\def\nmw{\mathcal T_w}
\def\epone{\mathtt{a_1}}
\def\eptwo{\mathtt{a_2}}
\def\epzero{\mathtt{a_0}}

\def\ssmu{\mathfrak S}
\def\ndmu{{\bf{n}_{\mu}}}

\numberwithin{equation}{section}

\usepackage{listings}

\allowdisplaybreaks

\usepackage{hyperref}
\hypersetup{
    colorlinks,
    citecolor=red,
    filecolor=green,
    linkcolor=blue,
    urlcolor=black
}
\definecolor{backcolour}{rgb}{0.95,0.95,0.92}
\lstdefinestyle{mystyle}{
    backgroundcolor=\color{backcolour},   
}

\usepackage{tabularx}
\usepackage{multirow}
\usepackage{blindtext} 
\usepackage{charter} 
\usepackage{microtype} 
\usepackage[english, ngerman]{babel} 
\usepackage{amsthm, amsmath, amssymb} 
\usepackage{float} 
\usepackage{graphicx, multicol} 
\usepackage{xcolor} 
\usepackage{marvosym, wasysym} 
\usepackage{rotating} 
\usepackage{censor} 
\usepackage{pseudocode} 
\usepackage{booktabs} 
\usepackage{csquotes} 
\newtheorem{theorem}{Theorem}[section]

\newtheorem{lemma}[theorem]{Lemma}
\newtheorem{remark}[theorem]{Remark}

\newtheorem{proposition}[theorem]{Proposition}

\newtheorem*{claim*}{Claim}
\newtheorem*{notation*}{Notation}

\title{Gradient blowup of smooth vacuum solutions to 1D compressible Euler equations}

\author{Juhi Jang\thanks{Department of Mathematics, University of Southern California, Los Angeles, CA 90089, USA, and Korea Institute for Advanced Study, Seoul, Korea.  Email: juhijang@usc.edu.}, \ Jiaqi Liu\thanks{Department of Mathematics, University of Southern California, Los Angeles, CA 90089, USA, Email: jiaqil@usc.edu}, \ and \ Nader Masmoudi\thanks{ NYUAD Research Institute, New York University Abu Dhabi, Abu Dhabi, UAE,
and Courant Institute of Mathematical Sciences, New York University, 251 Mercer St, New York,
NY 10012, USA, Email: nm30@nyu.edu}}

\date{}
\begin{document}

\maketitle

\abstract{We consider the isentropic compressible Euler equations in the half-line which govern the motion of gaseous fluids in contact with stationary vacuum boundary. 
We construct a large class of solutions that are initially smooth and square-integrable, and which, in finite time, transition to $C^{1-\mu}$ regularity for $\mu \in [1/2,1)$ near the boundary, leading to the gradient blowup at the boundary.  
It is based on stability analysis of self-similar waiting time solutions \cite{JLN2025} recently constructed by the authors.
}

\section{Introduction}

We consider one-dimensional polytropic gases governed by compressible Euler equations:
\be\label{Euler}
\begin{split}
\rho_t+ (\rho u)_x &=0, \\
(\rho u)_t+ (\rho u^2 +p)_x &= 0. 
\end{split}
\ee
Here $\rho:(t_0,T)\times \Omega \to \mathbb R_{\ge0}$, $u:(t_0,T)\times \Omega \to \mathbb R$, $p:(t_0,T)\times \Omega \to \mathbb R_{\ge0}$ represent the density, velocity, and pressure of the gas, and $\Omega$ is the spatial domain given by non-vacuum region. The pressure obeys the $\gamma$-law
\be\label{pressure}
p=\frac{\ga-1}{\ga} \rho^\gamma
\ee
where $\ga\in(1,3)$. 
The speed of the sound $c= \sqrt{\frac{dp}{d\rho}} $ is given by 
\[
c= \sqrt{(\gamma-1)}\rho^{\frac{\gamma-1}{2}}
\]
and in non-vacuum region, the Euler system \eqref{Euler} can be equivalently written as 
\be\label{Eulerh}
\begin{split}
u_t + u u_x + \frac{2}{\ga-1}cc_x&= 0,\\
c_t+uc_x+\frac{\ga-1}{2}cu_x &=0.
\end{split}
\ee
In the case of $\gamma=2$, the system \eqref{Euler} for $\rho$ and $u$ is known as the shallow water equations where $\rho$ represents the height of water level and $u$ the velocity. 

In this article, we are interested in classical 
solutions to \eqref{Euler} when they are continuously in contact with vacuum region. Specifically, we consider the Euler system \eqref{Euler} in the  spatial domain 
$$\Omega = \{ 0 < x <\infty \}$$ 
representing the support of the fluid. The left boundary $x=0$ is the first point of contact to vacuum state satisfying the following non-moving vacuum boundary conditions 
\be\label{BC1}
c (t,x) >0 \ \text{ for } \ 0<x<\infty, \quad c (t, 0)=0, \quad u (t,0)=0 . 
\ee
We consider initial data 
\be\label{IC}
c(t_0,x)= c_0(x), \ \  u(t_0,x)=u_0(x) \text{ for } x>0
\ee 
where 
\[
c_0(x)>0 \ \text{ for }x >0, \ c_0(0) =0, \ \text{ and } u_0(0)=0. 
\]

Well-posedness theory for strong solutions to \eqref{Eulerh}-\eqref{IC} depends on the behavior of  the sound speed near vacuum. Among others, if $ c_0(x) \sim x $ near $x=0$, Liu and Yang \cite{LY} showed the problem \eqref{Eulerh}-\eqref{IC} is  well-posed in Sobolev spaces at least for a short time. On the other hand, if $c_0(x) \sim x^\frac12$ near $x=0$, $(c_0^2)_x\sim 1$ near $x=0$. As it can be seen from the first equation of  \eqref{Eulerh}, it may trigger a non-trivial acceleration near the boundary; hence even if $u_0(0)=0$, the boundary may  move for $t>t_0$. In fact, well-posedness problem in such a case is best understood in the vacuum free boundary framework, where the domain is allowed to move: $\Omega =\{ b(t)<x<\infty\}$ and the left boundary is determined through the kinematic boundary condition $\dot b (t)= u(t, b(t))$. The initial condition $(c_0^2)_x\sim 1$ near $x=0$ is then propagated along the vacuum boundary such that 
\be\label{PVB}
0<\frac{\partial c^2}{ \partial x}(t,b(t)) <\infty . 
\ee
 This condition \eqref{PVB} is called  the physical vacuum boundary condition and it  arises  in various contexts ranging from gas dynamics to astrophysics when studying the dynamics of isolated compactly supported bodies \cite{CoSh2012, Jang2014, JM, LXZ, Sideris17}. There has been a lot of progress over the past decade on the local and global behavior of solutions for the vacuum free boundary problems for compressible fluids featuring \eqref{PVB}, for instance see  \cite{CoSh2012, GHJ2021a, HaJa2018-1, HaJa2016-2, HaJa2017,  IfTa2020, JaMa2009, JaMa2015}. 

In \cite{JM},  the first and third authors studied well and ill-posedness questions of various vacuum states satisfying $ c_0(x) \sim x^\alpha $ for a range of $\alpha$. 
In particular, it was shown that when $\alpha=1$ 
the boundary behavior $ c \sim x $ may not persist for long in general, and conjectured that after some time the boundary should start moving and attain the physical vacuum boundary \eqref{PVB} (so-called waiting time solutions), whereas for other ranges of $\alpha$ an instantaneous change of behavior into the physical vacuum might occur; see also \cite{LY1}. 
We note that waiting time solutions or instantaneous change of the boundary behavior are also of great interest in the context of the shallow water equation and the vacuum dam-break problem \cite{Camassa20,GB}. Since the Euler equation from gas dynamics covers the shallow water equation ($\gamma=2$), rigorous progress on the Euler system will have direct applications there. We also mention that the waiting
time behavior or the instantaneous change of behavior  of certain vacuum states have been thoroughly studied for nonlinear diffusion equations such
as the porous medium equation, for instance see \cite{CF1979,Knerr1977}.

The first existence of
waiting time solutions to the Euler system \eqref{Eulerh} undergoing the transition from  $\alpha=1$ to $\alpha=1/2$ with the physical vacuum boundary was recently given  by the authors \cite{JLN2025} inspired by recent progresses on rigorous construction of self-similar solutions in fluid and gas dynamics \cite{GHJ21,GHJ23,GHJS22,JLS23,JLS24,Jenssen18,Jenssen23,Merle22a}. The construction is based on the self-similar reduction thanks to the scaling symmetry of the Euler system and analysis of the self-similar ODE system. Moreover, an interesting recent  work  \cite{Jenssen} of Jenssen shows that some self-similar solutions to  \eqref{Euler} exhibit an instantaneous change of the vacuum boundary, namely the vacuum boundary moves immediately even if $u_0(0)=0$, while some other self-similar solutions are confined by the stationary interface $x=0$. Of course,  self-similar waiting time solutions themselves arise from carefully prepared initial data and they are unbounded in the far field. It is therefore natural and important to ask whether nonself-similar smooth and bounded initial data can also exhibit similar singular phenomena such as gradient blowup or a transient phase in which a static boundary evolves into a physical vacuum moving boundary, and also how generic they are.

The goal of this article is to construct a large class of smooth, square integrable, nonself-similar solutions to the Euler equation \eqref{Eulerh} with the stationary vacuum boundary \eqref{BC1}, which exhibit self-similar blowup \cite{JLN2025} near the blowup point. 
We are interested in vacuum solutions initially more regular than $C^2$. 
It is convenient to 
introduce 
\be\label{beta}
 \mu \in [\frac12,1) 
\ \text{ and } \  \beta= \frac{1}{1-\mu} \ge 2,  
\ee 
where $\mu$ is related to the scaling symmetry of the Euler system, and $\beta$ 
characterizes the regularity of our solutions. 
 We now state the informal statement of our main result. 

\begin{theorem}\label{thm 1.1}
    Let $\ga\in(1,3)$ and $\mu\in[\frac12,1)$.  Let $t_0<0$ with $|t_0|$ sufficiently small. There exists a finite-codimensional set of $C^{\lfloor\beta \rfloor, \beta- \lfloor\beta \rfloor }$ initial data at $t=t_0$ such that they launch strong  solutions to the vacuum boundary Euler system \eqref{Eulerh}-\eqref{IC} on $[t_0,0)$ transitioning to be $C^{1-\mu}$-H\"{o}lder continuous near $x=0$ at $t=0$. 
    The vacuum boundary remains at $x=0$ for $t\in [t_0,0)$ and the first gradient blow-up occurs at $(t,x)=(0,0)$.  The velocity and the speed of the sound are bounded in $L^2$ for each   $t\in [t_0,0)$  and the density remains positive $\rho>0$ for all $(t,x)\in[t_0,0)\times(0,\infty)$.   
\end{theorem}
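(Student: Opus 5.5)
The plan is a stability analysis around the self-similar waiting time solutions of \cite{JLN2025}, carried out in self-similar variables and closed by a finite-codimension (modulation/topological) argument.

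\textbf{Self-similar reduction.} The system \eqref{Eulerh} is invariant under $(t,x)\mapsto(\lambda t,\lambda^{1/(1-\mu)}x)$, with $u,c$ scaling like $x/t$. I introduce the variable $y=x/(-t)^{\beta}$ and the time $s=-\log(-t)$, and set
\[
u(t,x)=(-t)^{\beta-1}U(s,y),\qquad c(t,x)=(-t)^{\beta-1}C(s,y).
\]
The waiting time profile $(\bar U,\bar C)$ of \cite{JLN2025} is a stationary solution of the resulting autonomous system. It satisfies $\bar C\sim y$ and $\bar U\sim y$ near $y=0$, which matches the stationary boundary \eqref{BC1}. As $y\to\infty$ it grows like $y^{1-\mu}$, and this growth produces the $C^{1-\mu}$ behaviour at $t=0$.

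\textbf{Initial data and linearization.} Since the profile is unbounded, I cut it off at some $|x|\gtrsim 1$. Because $|t_0|$ is small, the cutoff sits at $y\sim |t_0|^{-\beta}\gg1$. I then add small perturbations that are $C^{\lfloor\beta\rfloor,\beta-\lfloor\beta\rfloor}$ and decay, so the data are smooth and lie in $L^2$. I write $(U,C)=(\bar U,\bar C)+(\tilde U,\tilde C)$ and derive the linearized operator $\mathcal L$ together with the nonlinear remainder. The main linear step is to show that on a weighted Sobolev space $X$ (weights adapted to $y\to0$, where the problem is characteristic with vacuum, and to $y\to\infty$), the operator $\mathcal L$ is a compact perturbation of a dissipative operator.

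\textbf{Dissipativity.} This comes from a weighted energy estimate. The damping comes from the scaling terms $\beta y\partial_y$ and the $(\beta-1)$ factors. Near $y=0$, I use the degenerate weight $\bar C^{2/(\gamma-1)}$, which acts as a density weight. Away from the origin, high derivatives gain damping from the sonic-point structure of the profile. Lumer–Phillips then gives maximal dissipativity. Truncating $\mathcal L$ to a compact perturbation on a large ball shows that the spectrum in $\{\Re\lambda>-\delta\}$ consists of finitely many eigenvalues, including the trivial ones generated by symmetries.

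\textbf{Nonlinear closure.} I decompose $X=X_{\mathrm{s}}\oplus X_{\mathrm{u}}$, with $X_{\mathrm{u}}$ finite-dimensional. A Brouwer-type shooting argument over the unstable coefficients of the initial data yields a global-in-$s$ solution whose stable part decays like $e^{-\delta s}$. This gives the finite-codimensional set of data. The nonlinear terms are controlled by Sobolev embedding in $X$, and the cutoff error is exponentially small in $s$ inside the relevant region. Solutions remain strong with $C>0$ for $y>0$ and $u=c=0$ at $y=0$. The latter holds because the perturbation is required to vanish to first order at $y=0$, and this vanishing is propagated along the boundary characteristic $y=0$. Undoing the scaling, $\partial_x c(t,0)\sim (-t)^{-1}\bar C'(0)$ blows up as $t\to0$, while for each fixed $x>0$ the limit profile behaves like $x^{1-\mu}$. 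Away from the origin, finite speed of propagation keeps the solution smooth, so the first gradient blowup occurs at $(0,0)$.

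\textbf{Main obstacle.} I expect the hardest step to be the coercive weighted estimate near the vacuum boundary $y=0$. There the system degenerates and the energy weight vanishes, yet the estimates must still control enough derivatives to close, and they must preserve the boundary conditions $c(t,0)=0$, $u(t,0)=0$ so that the boundary really stays at $x=0$. My first attempt would use derivatives of the form $\partial_y$ weighted by powers of $y$ (equivalently $y\partial_y$), combined with a Hardy inequality.
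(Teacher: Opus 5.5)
The gap is at the step you single out as the main obstacle, and the tool you propose for it points the wrong way.

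\textbf{Coercivity near $y=0$.} In Riemann invariants the system is diagonal transport, so there is no degenerate principal part that a density weight would compensate. The real issue is that $y=0$ is a stationary characteristic on which the transport speed vanishes linearly. Near $y=0$ the linearized operator is $\mathcal L_z\approx(1-\mu)y\partial_y-1$, so every localized power $y^s$ is an approximate eigenfunction with eigenvalue $(1-\mu)(s-\beta)$, which is unstable for $s<\beta$. Compare the eigenfunctions $\phi_a$ of Proposition~\ref{trivial mode}: they exist for every $a$ and behave like $y^{(1+a)/(1-\mu)}$ at $0$.

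Your two proposed tools both fail here:
\begin{itemize}
\item A weight vanishing at the origin, such as $\bar C^{2/(\gamma-1)}\sim y^{2/(\gamma-1)}$, makes the local energy anti-damped. For a weight $y^{2a}$ one gets $\frac12\partial_\tau\int y^{2a}Z^2\approx[(1-\mu)(a+\frac12)+1]\int y^{2a}Z^2$.
\item Derivatives of type $y\partial_y$ commute with $y\partial_y$ and gain no damping.
\end{itemize}
In such spaces the whole continuum $\{y^s:1<s<\beta\}$ is admissible. So the unstable spectrum is not a finite set of eigenvalues, $\mathcal L$ is not a compact perturbation of a dissipative operator, and your finite-dimensional shooting has nothing to act on.

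What works is the opposite. Subtract the Taylor polynomial of degree $n_\mu-1$ at $y=0$, with $n_\mu=\lfloor\beta+\frac12\rfloor+1$. Then estimate the remainder in $\dot H^{n_\mu}$ using plain $\partial_y$ derivatives, each of which gains at least $1-\mu$ from the outgoing field. Pair this with the singular weight $L^2(y^{-2n_\mu}dy)$, linked to $\dot H^{n_\mu}$ by Hardy's inequality. The dissipation rate $(1-\mu)n_\mu-\frac32+\frac\mu2$ is positive exactly because $n_\mu>\beta+\frac12$.

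Because $y=0$ is a fixed characteristic, the subtracted Taylor coefficients $z_i,w_i$ satisfy a closed triangular ODE system with rates $k_i=(1-\mu)(i-\beta)$ and $g_i>0$. This exhibits the unstable directions explicitly:
\begin{itemize}
\item $z_1=w_1=0$;
\item $z_j=q_j(w_2,\dots,w_j)$ for $2\le j\le\lfloor\beta\rfloor$;
\item additionally $w_2=0$ when $\beta+\frac12\in\mathbb N$.
\end{itemize}
These polynomial constraints are the finite-codimension set, and no Brouwer argument is needed.

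\textbf{Other errors.} Your similarity exponent should be $1/\mu$, not $\beta$. With $y=x/(-t)^\beta$ and $u=(-t)^{\beta-1}U$, a profile $U\sim y^{1-\mu}$ gives $u\sim(-t)^{\beta-2}x^{1-\mu}$, which degenerates as $t\to0$ unless $\mu=\frac12$. Your scaling in fact selects the family with $\mu$ replaced by $1-\mu$, whose data are only $C^{1,\frac1\mu-1}$ and whose limit is $C^{\mu}$. The exponent $\beta=1/(1-\mu)$ enters only through the first non-affine Taylor term $c_1y^\beta$ of $\bar z$, which is what fixes the $C^{\lfloor\beta\rfloor,\beta-\lfloor\beta\rfloor}$ regularity of the data.

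There is also no sonic point. Both speeds $y+\frac{\gamma+1}{4}\mathring z\ge(1-\mu)y$ and $y+\frac{3-\gamma}{4}\mathring z$ are outgoing for all $y>0$, and that is the source of derivative damping away from $0$.

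In the far field, unweighted $L^2$ is anti-damped at rate $\frac32-\mu$. So you cannot require the stable part to decay in a space containing $L^2$. The paper instead propagates decaying $\dot H^1$ and $\dot H^{n_\mu}$ bounds and only the growing bound $\|Z\|_{L^2}\lesssim e^{(\frac32-\mu)\tau}$, which is exactly what the physical $L^2$ bound requires. It does this with a cutoff transported by $y+\frac{\gamma+1}{4}\bar z$.

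Finally, the absence of blowup away from $(0,0)$ comes from $\|(\partial_y(\hat z-\mathring z),\partial_y\hat w)\|_{L^\infty}\to0$, not from finite speed of propagation.
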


A precise formulation of Theorem \ref{thm 1.1} including the specification
of function spaces and finite co-dimensional sets is given in Theorem \ref{thm 4.1}. The following remark is on the initial data and the behavior of solutions near the first blowup point.

\begin{remark} Initial data \eqref{IC} at $t_0<0$ 
leading to gradient blowups in Theorem \ref{thm 1.1}  
have the following asymptotics near the boundary $x=0$
    \be
    \begin{split}\label{initial expansion}
         u_0(x) &=  \frac{2}{(\ga+1)t_0} x + \sum_{i=2}^{\lfloor\beta \rfloor} a_i x^i + a_0 x^{\beta} + o(x^{\beta}),\\
        c_0(x)&=  -\frac{\ga-1}{(\ga+1)t_0} x + \sum_{i=2}^{\lfloor\beta \rfloor} b_i x^i -\frac{\ga-1}{2} a_0 x^{\beta} + o(x^{\beta}),
    \end{split}
    \ee
    where $a_0>0$ and $b_i:= b_i( a_2,\dots, a_i)$. Moreover, for given $0<x\ll 1$, as $t\to 0^-$ solutions constructed in Theorem \ref{thm 1.1} satisfy 
   \beqa\label{1.7}
        u(t,x) = -\tilde a x^{1-\mu}+o(|t|),\  c(t,x) = \tilde b x^{1-\mu}+o(|t|)
   \eeqa
with $ \tilde a, \tilde b>0$.
\end{remark}

\begin{notation*} 
Throughout the article, we use $\lfloor\beta \rfloor$  and  $\lceil\beta \rceil$ to denote the floor function  and the ceiling function respectively:  $\lfloor\beta \rfloor = \max\{ n \in \mathbb Z: n \le \beta \}$ and   $\lceil\beta \rceil = \min\{ n \in \mathbb Z: n \ge \beta \}$. Note that $\lceil\beta \rceil = \lfloor\beta \rfloor +1  $ if $\beta \notin \mathbb Z$ and  $\lceil\beta \rceil = \lfloor\beta \rfloor  $ if $\beta \in  \mathbb Z$. 
\end{notation*} 

To the best of our knowledge,  Theorem \ref{thm 1.1} provides  the first rigorous result featuring the descriptive gradient blowup of smooth, square integrable vacuum solutions to the compressible Euler equation. 
Loss of regularity from \eqref{initial expansion} to \eqref{1.7} is driven by the self-similar blowup of the Burgers' equation. In fact, self-similar waiting time solutions constructed in \cite{JLN2025}  are described by 
the self-similar Burgers' equation in the first Riemann variable $z$ for $t<0$, while the second Riemann variable $w$ vanishes (cf. \eqref{RI}); see Section \ref{sec:SSS} for details. Theorem \ref{thm 1.1} can be viewed as a stability result of self-similar solutions constructed in \cite{JLN2025} up to their first blowup time.     

In the study of self-similar shocks for the Burgers' equation, $C^\infty$ self-similar solutions are obtained for $\beta = 2k+1$, $k\in \mathbb N$. Moreover, it is well-known that self-similar solutions with $\beta = 3$ are the most stable one and they exhibit gradient blowup with $C^\frac13$ cusp singularity \cite{CGN18, EF00}.  Burgers self-similar shock solutions have been successfully used as base profiles or guiding principle to study the cusp-type singularity formation of other models such as  compressible Euler equations \cite{BuckmasterIyer,Buckmaster22,NealShkollerVicol25}, the Burgers' equation through transverse viscosity \cite{CGN18} or fractional KdV and fractional Burgers' equation  \cite{OhPasqualotto24}. We also mention that in \cite{NRSV24},  
stability of self-similar Burgers' equation with $\beta\in(1,\infty)$ were studied, 
and in the range of $\beta\in (1,2]$ the profiles  have been used to demonstrate stable $C^{\frac{1}{\beta}}$ cusp singularities for the 1D compressible Euler equations.

In our problem, we also use Burgers self-similar solutions as base profiles and study the perturbed dynamics. The key difference arises from the presence of the stationary vacuum boundary condition \eqref{BC1}. Solutions are confined in the region $\{x>0\}$ and we do not insist on smoothness across the boundary. In particular, our Burgers self-similar index satisfies $\beta\in \mathbb R_{\ge 2}$
and the corresponding initial data are smoother than $C^2$ near the boundary. As a result, Theorem \ref{thm 1.1} allows a broad class of initial data to evolve into H\"{o}lder continuous solutions $C^{1-\mu}$ for $\mu \in [1/2,1)$, of which first derivatives blow up at $(t,x)=(0,0)$. Moreover, our result deciphers the precise finite co-dimensional set of initial data that lead asymptotically to the self-similar gradient blowup.

In  Theorem \ref{thm 1.1}, the first blowup time  has been fixed as $t=0$ for simplicity and a systematic study of solutions for all $\mu \in [1/2,1)$. Other blowup times are possible thanks to the time translation symmetry of the Euler system. In particular, by modulating the linear coefficients \eqref{initial expansion}, one can see that initial slopes $u_x(t_0,0)$ and $c_x(t_0,0)$ determine the final blowup time, analogously to the Burgers equation; see Appendix \ref{sec: TT} for discussion on time translation symmetry and modulation. Among all self-similar Burgers profiles indexed by $\mu \in [1/2,1)$, the profile associated with $\mu=1/2$ can be viewed as the most stable one in the sense that the three conditions imposed on the initial data \eqref{initial expansion} for $\mu=1/2$, which are $a_1=b_1=0$ and $b_2=b_2(a_2)$,  are all related to the symmetries of the  system; see Remark \ref{remark 2.10}. 

We discuss the methodology of Theorem \ref{thm 1.1} along with some technical points.  The proof is based on 
stability analysis of self-similar solutions in line with recent developments on self-similar solutions and stability \cite{Buckmaster22,CGN18,CRS19,DM19, GHJS25, LWZ25, Merle97, Merle22b,OhPasqualotto24}. In Section \ref{section 2}, we introduce self-similar reformulation for the system of Riemann invariants $(\hat z, \hat w)$ in which self-similar solutions constructed in \cite{JLN2025} are realized as steady states $(\bar z, 0)$. To obtain square integrable solutions, we study perturbations around localized self-similar solutions $\mathring z = \chi \bar z $ using the time dependent cutoff function transported by the transport velocity for $\hat z - \bar z$. Two transport velocities for the perturbed system are both outgoing in the self-similar variables (cf. \eqref{PDE of tilde z w}): the damping effect is amplified upon taking the derivatives and high frequency part is stabilized. Low frequency part in general faces a norm growth just like for self-similar shocks for the Burgers’ equation, and such a growth can be controlled through modulations. 
Motivated by this observation, we decompose the perturbations into a low frequency component serving as an approximate profile for our solutions near the singular point, and a remainder containing the high frequency contribution. We remark that such a decomposition is a standard tool in the analysis of blowup problems; see, for example, \cite{Buckmaster22,CGN18,CRS19,Merle97,OhPasqualotto24}.

More specifically, we seek our solutions $(\hat z, \hat w)$ of  the following form 
\[
\binom{\hat z}{\hat w} = \binom{\mathring z}{0} +  \sum_{i=2}^{\ndmu-1} \varphi  \binom{z_i}{w_i}\frac{y^i}{i ! } + \binom{Z}{W}
\]
where $\varphi$ is a compactly supported smooth cutoff function, the second finite sum  represents modulations, where $(z_i,w_i)$ are $i$-th order Taylor coefficients of solutions at the origin, and the last term is a remainder vanishing up to $(\ndmu-1)$-th order. Here $\ndmu$ is the regularity index  \eqref{def ndmu} intimately tied to linear stability. 
In Section \ref{section 3}, the modulation ODEs for $(z_i, w_i)$ are derived by imposing vanishing conditions at the origin of  high order equations. In particular,  Lemma \ref{lemma 3.7} identifies the initial data set for $(z_i, w_i)$ for $2\le i\le \lceil\beta\rceil$ leading to the time decay,  
namely a stable manifold of the system of modulation ODEs. For the remainder $(Z,W)$, we adopt a robust approach based on energy estimates with an appropriately chosen singular weight and high order energy. Specifically, we analyze the system for $(\frac{Z}{y^\ndmu},\frac{W}{y^\ndmu})$ as well as $(\pa_y^\ndmu Z, \pa_y^\ndmu W)$, of which linear part becomes strictly dissipative with our choice of $\ndmu$. Linear stability analysis is carried out in Section \ref{section 4}. We remark that 
singular weights via suitable local vanishing conditions at the origin on perturbations of the interior profile have been successfully used  in recent years for other blowup problems \cite{chenHou22, hou20242, chenHouhuang21, LWZ25}. 
For nonlinear analysis in Section \ref{section 5}, we employ the bootstrap argument by regulating different decay or growth conditions for various norms including $\dot H^1$ and $L^2$ norms. The bootstrap assumptions are improved upon establishing several energy estimates with the aid of Hardy and interpolation inequalities. 

We note that non-integer $\beta$'s bring additional challenges as the profiles $\mathring z $ contain $y^\beta$ term (cf. \eqref{z y<<1}) and thus $\pa_y^j \mathring z$ becomes unbounded if $j \ge \lceil \beta\rceil$. The profiles enter the system for $(Z,W)$ as coefficients in the leading order part as well as as interaction terms with the modulation part. Too many derivatives may create non-integrable terms, and hence function spaces for $(Z,W)$ need to be chosen carefully. In fact, when  $\beta$ is a half-integer, the regularity requirement  to ensure strict coercivity of the linear part  induces non-integrable
singular behavior of interaction terms between low modulation variables $(z_2,w_2)$ and the profile itself. In this case, we impose the additional constraint $w_2=0$ as in \eqref{initialziwi}.

An interesting question is whether H\"{o}lder continuous solutions obtained in Theorem \ref{thm 1.1} could be developed for $t>0$ as solutions to the Euler equation. In \cite{JLN2025}, it was shown that within the class of self-similar solutions, such H\"{o}lder continuous solutions continue as solutions to the free boundary Euler satisfying the physical vacuum condition \eqref{PVB} for $t>0$ with two characteristic curves emerged as the vacuum boundary and a weak discontinuity. We leave this continuation problem of nonself-similar solutions obtained in Theorem \ref{thm 1.1} as a future study.

\section{Self-similar reformulation}\label{section 2}

The system \eqref{Eulerh} admits a two-parameter family of invariant scalings: the scaling transformation
\be\label{scaling}
u (t,x) \to \nu^{\delta -1 } u ( \frac{t}{\nu},\frac{x}{\nu^\delta }), \quad c (t,x) \to  \nu^{\delta -1 } c ( \frac{t}{\nu},\frac{x}{\nu^\delta })
\ee
for $\nu, \, \delta >0$ leaves the system invariant. This scaling symmetry is closely tied to the existence of self-similar solutions. In \cite{JLN2025}, the authors constructed a continuum two-parameter family of self-similar solutions to the Euler equation \eqref{Eulerh} with the vacuum boundary \eqref{BC1}, whose gradients blow up at the vacuum boundary in a finite time. We give more details on such self-similar solutions in Section \ref{sec:SSS}.

We will work with Riemann variables. Riemann invariants are 
defined by 
\be\label{RI}
	z = u -\frac{2}{\ga-1}c, \ \  w = u +\frac{2}{\ga-1}c 
\ee
and they satisfy
\be\label{EE z w}
	\begin{aligned}
		\partial_t z+\la_1\partial_x z &= 0,\\
		\partial_t w +\la_2 \partial_x w &=0, 
 	\end{aligned}
\ee
where two characteristic speeds $\la_1$, $\la_2$ are 
\beqa
	\la_1 = u - c=\frac{\ga+1}{4} z + \frac{3-\ga}{4} w, \\
	 \la_2 = u +c= \frac{3-\ga}{4} z + \frac{\ga+1}{4} w.
\eeqa

Inspired by the scaling symmetry \eqref{scaling}, we define 
the self similar variables $(\tau, y)$ by 
\be\label{tau y}
	\frac{d\tau }{dt} = \frac{\delta }{ (- t)} 
    \ \text{ and }\ y = \frac{x}{(-t)^\delta} . 
\ee
Here for simplicity we set the first blowup time to be $t=0$ . Due to the time translation symmetry, other blowup times $T$ can be obtained by considering $T-t$ instead of $-t$. And we have  $e^{-\mu\tau}= \mu (-t) $ where $$\mu= \frac{1}{\delta}.$$  
Let  
\beqa\label{selfsimilar ansatz}
	z(t,x) = \delta (-t)^{\delta -1} \hat z(\tau,y)= (\frac{1}{\mu})^\delta e^{(\mu-1)\tau}\hat z(\tau,y),\\
	w(t,x) = \delta (-t)^{\delta -1} \hat w(\tau,y)= (\frac{1}{\mu})^\delta e^{(\mu-1)\tau}\hat w(\tau,y).
\eeqa
Plugging \eqref{selfsimilar ansatz} into \eqref{EE z w}, renormalized unkowns  $(\hat z, \hat w)$  satisfy 
\be\label{ss EE zw}
	\begin{cases}
	\partial_\tau \hat z + (y+\hat \la_1 ) \partial_y \hat z + (\mu-1)\hat z= 0,\\
	\partial_\tau \hat w + (y+\hat \la_2 ) \partial_y \hat w + (\mu-1)\hat w = 0, 
	\end{cases}
\ee
where 
\be\label{def la1 la2}
\hat \la_1 = \frac{\ga+1}{4} \hat z + \frac{3-\ga}{4} \hat w, \quad 
	\hat \la_2 =  \frac{3-\ga}{4} \hat z + \frac{\ga+1}{4} \hat w. 
\ee

\subsection{Steady state with respect to $\tau$}\label{sec:SSS}

Adopting the result in \cite{JLN2025} into the Riemann invariants $(\hat z, \hat w)$, we have a simple wave steady state $(\bar z(y), 0)$ to \eqref{ss EE zw} satisfying 
\be \label{2.8}
	\Big(y+ \frac{\ga+1}{4}\bar z\Big) \partial_y \bar z + (\mu-1)\bar z= 0. 
\ee
Letting
\be\label{z = -2y U}
	\bar z = -2y \bar U(y), 
\ee
 $\bar U$ solves  the ODE 
\be\label{ode B}
	y \frac{d\bar U}{dy} = -\frac{[(\ga+1)\bar U-2\mu]\bar U}{(\ga+1) \bar U-2}.
\ee

\smallskip

We record the properties of $\bar U$ \cite{JLN2025}. 

\begin{lemma}
\label{lemma 3.4}  Let $\ga\in(1,3)$ and $\mu\in(0,1)$ be given. The solutions of \eqref{ode B} with $\bar U(0)= \frac{2\mu}{1+\gamma}$ satisfy the following 
\be\label{B formula}
y= K \frac{ ( \frac{2\mu}{\gamma+1} -\bar U)^{\frac{1}{\mu}-1}}{ \bar U^\frac{1}{\mu}}  \ \text{ for }  \ 0< y <\infty,  
\ee
and 
\be\label{B bc}
   \bar U' \leq 0, \ \bar U(0)= \frac{2\mu}{\gamma+1} , \  \ \lim_{y\to\infty} \bar U (y)  =0, \text{ and }  0< \bar U <  \frac{2\mu}{\gamma+1},
\ee
where $K>0$ is a free parameter. 
Moreover,  $\bar U$ enjoys the following asymptotic behavior: as $y\to 0^+$ 
\be\label{Asy at C}
\bar U (y) =  \bar U(0) - \frac{ \bar U(0)^\frac{1}{1-\mu}}{K^{\frac{\mu}{1-\mu}}} y^{\frac{\mu}{1-\mu}} + O (y^{\frac{2\mu}{1-\mu}} ),  \ \  0<y \ll 1
\ee
and as $y\to \infty$, 
\be\label{Asy at A}
\bar U(y) =  K^\mu  \bar U(0)^{1-\mu}\frac{1}{y^\mu} + o(\frac{1}{y^\mu}), \ \ y \gg  1  .
\ee
\end{lemma}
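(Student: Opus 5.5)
The ODE \eqref{ode B} is autonomous in $\log y$, so the plan is to separate variables, integrate in closed form to get \eqref{B formula}, and then read off the monotonicity, range and asymptotics from the explicit implicit relation. Write $a=\frac{2\mu}{\ga+1}$ and $b=\frac{2}{\ga+1}$. Since $\mu<1$ we have $0<a<b$. The ODE then reads
\[
y\bar U' = -\frac{\bar U(\bar U-a)}{\bar U-b}.
\]
On the interval $0<\bar U<a$ the right-hand side is $\bar U(a-\bar U)/(\bar U-b)<0$, because $\bar U-b<0$. This already suggests $\bar U'<0$ there. Separating variables gives
\[
\frac{dy}{y}=-\frac{\bar U-b}{\bar U(\bar U-a)}\,d\bar U ,
\]
and by partial fractions
\[
\frac{\bar U-b}{\bar U(\bar U-a)}=\frac{b/a}{\bar U}+\frac{1-b/a}{\bar U-a}=\frac{1/\mu}{\bar U}+\frac{1-1/\mu}{\bar U-a}.
\]
Integrating yields
\[
\log y=-\tfrac1\mu\log\bar U+(\tfrac1\mu-1)\log(a-\bar U)+\log K,
\]
which is exactly \eqref{B formula}.

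Next I would show that \eqref{B formula} really defines the solution with $\bar U(0)=a$. Consider $F(U)=K(a-U)^{1/\mu-1}U^{-1/\mu}$ on $(0,a)$. It is strictly decreasing, since it is a product of two positive decreasing factors; here one uses $1/\mu-1>0$. At the endpoints, $F\to0$ as $U\to a^-$ and $F\to\infty$ as $U\to0^+$. So $F$ is a bijection $(0,a)\to(0,\infty)$, and its inverse $\bar U(y)$ is smooth and decreasing, with $\bar U(0^+)=a$ and $\bar U(\infty)=0$. Differentiating the relation shows that this inverse solves \eqref{ode B}. This gives \eqref{B bc}, with $K>0$ arising as the free constant of integration.

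One point needs care: the point $y=0$ is singular for the ODE, so uniqueness is not automatic. Linearizing at $\bar U=a$ gives $y\,\delta'=\frac{\mu}{1-\mu}\delta$, so every solution from $a$ behaves like $\delta\sim Cy^{\mu/(1-\mu)}$. The one-parameter family in $K$ therefore exhausts the solutions staying in $(0,a)$, which is consistent with the statement. I would remark on this rather than belabor it, since the lemma only asserts that these solutions satisfy the listed properties.

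For the asymptotics at $y\to0^+$, set $\delta=a-\bar U$. Then \eqref{B formula} gives
\[
y=K\delta^{\frac{1-\mu}{\mu}}(a-\delta)^{-1/\mu}.
\]
Solving for $\delta$,
\[
\delta=\Big(\tfrac yK\Big)^{\frac{\mu}{1-\mu}}(a-\delta)^{\frac{1}{1-\mu}}.
\]
Iterating once with $(a-\delta)^{1/(1-\mu)}=a^{1/(1-\mu)}(1+O(\delta))$ and $\delta=O(y^{\mu/(1-\mu)})$ gives
\[
\delta=a^{\frac1{1-\mu}}K^{-\frac{\mu}{1-\mu}}y^{\frac{\mu}{1-\mu}}+O\big(y^{\frac{2\mu}{1-\mu}}\big),
\]
which is \eqref{Asy at C}.

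For $y\to\infty$ we have $\bar U\to0$. Then $\bar U^{1/\mu}=K(a-\bar U)^{1/\mu-1}/y$, so
\[
\bar U=K^\mu(a-\bar U)^{1-\mu}y^{-\mu}=K^\mu a^{1-\mu}y^{-\mu}(1+o(1)),
\]
which is \eqref{Asy at A}.

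The only mildly delicate step is the behaviour at the singular point $y=0$. This covers the well-definedness of the inverse there and the size of the error term in the implicit-function iteration. Both follow because $F$ is monotone with explicit power behaviour near $U=a$. Everything else is routine calculus.
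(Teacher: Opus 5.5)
Your proof is correct and complete. The partial-fraction identity
\[
\frac{\bar U-b}{\bar U(\bar U-a)}=\frac{1/\mu}{\bar U}+\frac{1-1/\mu}{\bar U-a}
\]
yields \eqref{B formula} exactly. The strictly decreasing bijection $F:(0,a)\to(0,\infty)$ gives \eqref{B bc}. Both asymptotic expansions follow from the implicit relation as you describe. In particular the $O(y^{2\mu/(1-\mu)})$ error at the origin is justified by the a priori bound $\delta\le a^{1/(1-\mu)}(y/K)^{\mu/(1-\mu)}$, which holds because $a-\delta\le a$.

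There is no proof in the paper to compare against. The paper simply records this lemma from \cite{JLN2025}, and later uses the explicit relation with $K=\tfrac12$ in Appendix B. Your separation-of-variables derivation therefore gives a self-contained verification of what the paper imports.

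One point of precision about how the statement should be read. The hypothesis $\bar U(0)=\frac{2\mu}{\gamma+1}$ alone does not single out your family. Two other solutions also emanate from $a$:
\begin{itemize}
\item the constant solution $\bar U\equiv a$;
\item the branch $a<\bar U<b$, which corresponds to $C<0$ in your linearization.
\end{itemize}
On the second branch one has $y=K(\bar U-a)^{1/\mu-1}\bar U^{-1/\mu}$. This solution reaches the singular value $b=\frac{2}{\gamma+1}$ at the finite point $y=K(b-a)^{1/\mu-1}b^{-1/\mu}$, where $\bar U'$ blows up. The lemma must therefore be read for solutions taking values in $(0,a)$ and defined on all of $(0,\infty)$, which is exactly what you prove.

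Within that class, the claim that the $K$-family exhausts all solutions follows directly from your separated relation. The right-hand side of the ODE is smooth and nonvanishing for $\bar U\in(0,a)$, so separation of variables is rigorous on any interval where the solution stays in $(0,a)$. You therefore do not need the linearization at the singular point $y=0$ for this step.
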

\begin{remark}\label{remark 2.3}
The corresponding Riemann invariant obtained from $\bar z(y)$, namely $\bar z(t,x) = \delta (-t)^{\delta-1} \bar z (y)  $ (cf. \eqref{selfsimilar ansatz})  
 at given initial time $t_0<0$ 
 near the vacuum boundary   
takes the from 
$$ \bar z(t_0,x) =  \tfrac{4}{(\gamma+1)t_0} x + c_1 x^{\beta}+ o(x^{\beta}), \quad 0<x\ll 1 $$ 
where $c_1>0$ is an arbitrary constant; 
as $t\to 0^-$ and $x>0$, it takes the form $$ \bar z(t,x) = - c_2 x^{1-\mu} + o(|t|), \quad x>0, \ |t|\ll 1 $$
 for some $c_2>0$. 
 \end{remark}

We note that self-similar solutions $\bar z$ are realized as a family of two parameter solutions $\bar z^{\mu, K}$, and they are closely related to self-similar shocks for Burgers equation \cite{CGN18} and \cite{EF00}. The key difference arises from the presence of the boundary at $y=0$. Solutions are confined to $\{y>0\}$; we do not insist on smoothness of solutions across $y=0$ but the regularity up to $y=0$ from the non-vacuum region. In particular, we are interested in smooth data near the vacuum boundary for the Euler system, smoother than $C^2$ at the level of Riemann variables. In view of Remark \ref{remark 2.3}, we will consider $\mu \in [\frac12, 1)$ from now.

Denote the set of $\mu$ whose corresponding $\beta$ \eqref{beta} is an integer by 
\be\label{def ssmu}
	\ssmu: = \{\mu\in[\frac12,1) \mid \beta-[\beta ]=0\}.
\ee

We collect some useful properties of $\bar z$.  
 \begin{proposition}\label{prop bar z}
Let $\gamma \in (1,3)$ and $\mu \in \bigl[\tfrac12,1\bigr)$.  
The steady state $\bar z$ satisfies the following properties:
\begin{enumerate}
    \item[$(1)$] The first derivative is given by
    \[
        \partial_y \bar z = \frac{-4(\mu-1)\bar U}{(\gamma+1)\bar U-2}.
    \]
    Moreover, $\partial_y \bar z$ is monotone increasing from $\partial_y \bar z(0)=-\frac{4\mu}{\ga+1}$ to $\lim_{y\to \infty} \pa_y \bar z(y)=0$. In particular, $ -\frac{4\mu}{\ga+1} \le \partial_y \bar z <0$
    
    \item[$(2)$] For any $y \in [0,\infty)$, one has
    \[
        1 + \frac{\gamma+1}{4}\,\partial_y \bar z \in [\,1-\mu,\,1\,).
    \]
    
    \item[$(3)$] As $0<y\ll 1$, the function $\bar z$ admits the expansion
    \begin{equation}\label{z y<<1}
        \bar z(y) = -\frac{4\mu}{\gamma+1} y \;+\; c_1 y^{\beta} 
        \;+\; O\!\left(y^{2\beta -1 }\right),
    \end{equation}
    where $c_1=c_1 (\ga,\mu,K)>0$.  
    Furthermore, there exists a constant $A>0$ such that, for $0<y\ll 1$,
    \begin{align}
        |\partial_y \bar z(y)| &\leq A, \notag \\ 
        |\partial_y^2 \bar z(y)| &\leq A\, y^{\beta-2}, \notag \\
        |\partial_y^i \bar z(y)| &\leq 
        \begin{cases}
            A\, y^{j(\beta-1)+1-i}, 
            & \text{if } \mu\in \ssmu,\; j\in \mathbb N_{\geq 1},\; (j-1)(\beta-1))+1 < i \leq j(\beta-1)+1, \\[6pt]
            A\, y^{\beta-i}, 
            & \text{if } \mu \notin \ssmu,
        \end{cases}\label{payi z y<<1}
    \end{align}
    for all $i \in \mathbb N_{\geq 3}$.
    
    \item[$(4)$] As $y \to \infty$, one has the asymptotic behavior
    \begin{equation}\label{z y>>1}
        \bar z(y) = -c_2 y^{1-\mu} + o\!\left(y^{1-\mu}\right),
    \end{equation}
    where $c_2:=c_2(\ga,\mu,K)>0$. Moreover, there exists a constant $A>0$ such that, for $y \gg 1$,
    \begin{equation}\label{payi z y>>1}
        |\partial_y^i \bar z(y)| \leq A\, y^{1-\mu-i}, 
    \end{equation}
    for all $i \in \mathbb N_{\geq 1}$.
\end{enumerate}
\end{proposition}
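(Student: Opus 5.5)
Everything will be derived from the ODE \eqref{ode B} and the implicit formula \eqref{B formula} of Lemma \ref{lemma 3.4}, using $\bar z=-2y\bar U$.

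\textbf{Item (1).} Differentiating $\bar z=-2y\bar U$ and substituting \eqref{ode B} gives
\[
\partial_y\bar z=-2\bar U-2y\bar U'=-2\bar U+\frac{2[(\ga+1)\bar U-2\mu]\bar U}{(\ga+1)\bar U-2}=\frac{-4(\mu-1)\bar U}{(\ga+1)\bar U-2}.
\]
Set $g(U)=\frac{4(1-\mu)U}{(\ga+1)U-2}$. Its derivative is $g'(U)=\frac{-8(1-\mu)}{((\ga+1)U-2)^2}<0$. Since $\bar U'\le 0$, the composition $g(\bar U(y))$ is nondecreasing in $y$.

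At the endpoints, $\bar U(0)=\frac{2\mu}{\ga+1}$ gives $g=\frac{4(1-\mu)\cdot 2\mu/(\ga+1)}{2\mu-2}=-\frac{4\mu}{\ga+1}$. As $y\to\infty$, $\bar U\to0$ gives $g\to0$. Because $0<\bar U<\frac{2\mu}{\ga+1}<\frac{2}{\ga+1}$, the denominator stays negative, so $\partial_y\bar z<0$.

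\textbf{Item (2).} This follows directly from (1): $1+\frac{\ga+1}{4}\partial_y\bar z$ ranges over $[1-\mu,1)$.

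\textbf{Item (3): expansion near $y=0$.} Write $\bar U=\bar U(0)-V$. By \eqref{Asy at C},
\[
V=\frac{\bar U(0)^\beta}{K^{\beta-1}}\,y^{\beta-1}+O\bigl(y^{2(\beta-1)}\bigr),
\]
since $\frac{\mu}{1-\mu}=\beta-1$ and $\frac1{1-\mu}=\beta$. Then $\bar z=-\frac{4\mu}{\ga+1}y+2yV$, which gives \eqref{z y<<1} with $c_1=2\bar U(0)^\beta K^{1-\beta}>0$.

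For the derivative bounds, a term-by-term differentiation of an $O(\cdot)$ remainder is not justified. I would instead show that $V$ is an analytic function of $s=y^{\beta-1}$. Rewrite \eqref{B formula} as
\[
V^{\beta-1}\,(\bar U(0)-V)^{-\beta}=K^{1-\beta}y^{\beta-1}\cdot\text{const},
\]
i.e.\ $V\,(\bar U(0)-V)^{-\beta/(\beta-1)}=C s$. The left side is an analytic function of $V$ near $0$ with nonzero derivative, so the analytic inverse function theorem gives $V=\sum_{k\ge1}v_k s^k$, convergent for small $s$. Hence
\[
\bar z=-\frac{4\mu}{\ga+1}y+2\sum_{k\ge1}v_k\,y^{k(\beta-1)+1}.
\]
Each monomial $y^{k(\beta-1)+1}$ differentiated $i$ times is bounded by $y^{k(\beta-1)+1-i}$, or vanishes identically when the exponent is a nonnegative integer below $i$.

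\begin{itemize}
\item If $\mu\notin\ssmu$, the exponent $\beta$ (the $k=1$ term) is non-integer, so it dominates for $i\ge2$ and gives $y^{\beta-i}$; the higher terms are smaller and the tail is summable for small $y$.
\item If $\mu\in\ssmu$, all exponents $k(\beta-1)+1$ are integers. The monomials with $k(\beta-1)+1<i$ are killed by $\partial_y^i$, so the leading surviving term is the smallest $j$ with $j(\beta-1)+1\ge i$, giving $y^{j(\beta-1)+1-i}$. This is exactly the stated case split.
\end{itemize}

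The bounds $|\partial_y\bar z|\le A$ and $|\partial_y^2\bar z|\le Ay^{\beta-2}$ follow the same way.

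\textbf{Item (4): behaviour as $y\to\infty$.} Set $s=y^{-\mu}$, or equivalently use $U$ itself as the variable. From \eqref{B formula}, $U^{1/\mu}(\bar U(0)-U)^{1-1/\mu}=Ky^{-1}$. Taking the $\mu$-th power gives $U\,(\bar U(0)-U)^{\mu-1}=K^\mu y^{-\mu}$, whose left side is analytic in $U$ near $0$ with nonzero derivative. Hence $\bar U=\sum_{k\ge1}u_k y^{-k\mu}$, analytic in $y^{-\mu}$ for $y$ large, and
\[
\bar z=-2\sum_{k\ge1}u_k\,y^{1-k\mu}.
\]
This gives \eqref{z y>>1} with $c_2=2K^\mu\bar U(0)^{1-\mu}$, consistent with \eqref{Asy at A}. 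Differentiating termwise, which is legitimate for the convergent series in the region $y^{-\mu}$ small, each term satisfies $|\partial_y^i y^{1-k\mu}|\lesssim y^{1-k\mu-i}\le y^{1-\mu-i}$, and summing gives \eqref{payi z y>>1}.

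\textbf{Main obstacle.} The delicate step is the derivative bounds in (3) when $\beta\in\mathbb N$. There one must track which powers $y^{k(\beta-1)+1}$ are annihilated by $\partial_y^i$ in order to get the sharp exponents. The analytic-inversion representation handles this cleanly, but one must check that the constant $C$ and the nondegeneracy of the inverted map are uniform, which they are since $\bar U(0)>0$.
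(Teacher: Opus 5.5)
Your proof is correct, and for items (2)--(4) it takes a different and more careful route than the paper.

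\textbf{Item (2).} The paper rewrites $1+\frac{\gamma+1}{4}\partial_y\bar z$ as a separate rational function of $\bar U$ and checks that it is monotone. You read the range off directly from item (1), which is shorter.

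\textbf{Items (3)--(4).} The paper combines the two-term asymptotics \eqref{Asy at C}, \eqref{Asy at A} with the regularity class of $\bar z$ ($C^\infty$ versus $C^{\lfloor\beta\rfloor,\beta-\lfloor\beta\rfloor}$), and then appeals to ``a direct computation.'' As you point out, differentiating an $O(\cdot)$ remainder is not justified on its own, so the paper's argument is really a sketch. You instead invert \eqref{B formula} analytically. This represents $\bar z$ as a convergent series in the powers $y^{k(\beta-1)+1}$ near $0$ and $y^{1-k\mu}$ near $\infty$. That representation does three things:
\begin{itemize}
\item it legitimizes termwise differentiation;
\item it makes the case split for $\mu\in\ssmu$ transparent, since you can see exactly which monomials $\partial_y^i$ annihilates;
\item it proves real analyticity of $\bar z$ up to $y=0$ when $\beta\in\mathbb N$, a fact the paper uses without proof.
\end{itemize}

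\textbf{A slip to fix.} Both of your displayed forms of the implicit relation near $y=0$ are wrong, and they are inconsistent with each other. Use $\frac1\mu-1=\frac{1}{\beta-1}$ and $\frac1\mu=\frac{\beta}{\beta-1}$, and raise \eqref{B formula} to the power $\beta-1=\frac{\mu}{1-\mu}$. This gives
\[
V\,(\bar U(0)-V)^{-\beta}=K^{1-\beta}\,y^{\beta-1}.
\]
The left side is still analytic in $V$ near $0$, with derivative $\bar U(0)^{-\beta}\neq 0$ at $V=0$. So the inverse-function argument goes through unchanged, and it yields exactly the coefficient $v_1=\bar U(0)^\beta K^{1-\beta}$ that you already used for $c_1$.

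\textbf{A point to state explicitly.} The analytic local inverse must coincide with the actual $V$ (respectively $\bar U$). This holds for two reasons:
\begin{itemize}
\item $V\to 0$ as $y\to 0$, and $\bar U\to 0$ as $y\to\infty$;
\item the maps $V\mapsto V(\bar U(0)-V)^{-\beta}$ and $U\mapsto U(\bar U(0)-U)^{\mu-1}$ are strictly increasing on $(0,\bar U(0))$, so each value of the right-hand side is attained exactly once.
\end{itemize}
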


\begin{proof}
For $(1)$, since $\bar z = -2y\bar U$, we compute
\[
    \partial_y \bar z = -2\Bigl(\bar U + y\,\partial_y \bar U \Bigr)
    = -2\Biggl(\bar U - \frac{[(\gamma+1)\bar U - 2\mu]\bar U}{(\gamma+1)\bar U-2}\Biggr)
    = \frac{-4(\mu-1)\bar U}{(\gamma+1)\bar U - 2}.
\]
Since the function $g(r) = \frac{-4(\mu-1)r}{(\gamma+1)r - 2}$ is strictly decreasing on the interval $r\in[0, \frac{2\mu}{\ga+1}]$ and $\bar U(y)$ is strictly decreasing from $\frac{2\mu}{\ga+1}$ to $0$ as $y$ increases from $0$ to $\infty$, $ \partial_y \bar z$ is increasing from $\partial_y \bar z(0)=-\frac{4\mu}{\ga+1}$ to $\lim_{y\to \infty} \pa_y \bar z(y)=0$.

For $(2)$, one finds
\[
    1+\frac{\gamma+1}{4}\,\partial_y \bar z
    = \frac{(2-\mu)\bar U - \tfrac{2}{\gamma+1}}{\bar U - \tfrac{2}{\gamma+1}}.
\]
The function $f(r)=\frac{(2-\mu)r-\tfrac{2}{\gamma+1}}{r-\tfrac{2}{\gamma+1}}$ is decreasing for $r\in\bigl[0,\tfrac{2}{\gamma+1}\mu\bigr]$. Since $\bar U$ is decreasing in $y$, it follows that
\[
    1-\mu = \frac{(2-\mu)\bar U(0) - \tfrac{2}{\gamma+1}}{\bar U(0)-\tfrac{2}{\gamma+1}}
    < 1+\frac{\gamma+1}{4}\,\partial_y \bar z
    \leq \lim_{y\to\infty} \frac{(2-\mu)\bar U(y) - \tfrac{2}{\gamma+1}}{\bar U(y)-\tfrac{2}{\gamma+1}} = 1,
\]
where we used \eqref{Asy at C} in the last equality.

For $(3)$, the expansion \eqref{z y<<1} follows directly from \eqref{Asy at C}. Since $\beta \geq 2$, the bounds on $\partial_y \bar z$ and $\partial_y^2 \bar z$ are immediate. For higher-order derivatives, note that $\bar z \in C^{\infty}$ if $\mu \in \ssmu$, and $\bar z \in C^{[\beta],\beta-[\beta]}$ if $\mu \notin \ssmu$. This explains the dichotomy in \eqref{payi z y<<1}. A direct computation yields the stated bounds.

For $(4)$, the asymptotics \eqref{z y>>1} follow from \eqref{Asy at A}, and the derivative estimates \eqref{payi z y>>1} are then straightforward.
\end{proof}

\subsection{Profiles}

Our goal is to construct classical solutions around the steady state $(\bar z, 0)$, which exhibit the loss of smoothness resulting in the same gradient blowup behavior as the self-similar solutions near the vacuum boundary $x=0$ as described in Remark \ref{remark 2.3}, through stability analysis. Due to the growth of $\bar z (y)$ (cf. Proposition \ref{prop bar z} (4)), the corresponding $\bar z (t,x)$ is unbounded in $x$. Moreover, the change of the behavior of solutions near the boundary is a local intrinsic phenomenon for the Euler flows irregardless of the behavior in the far-field region. To this end, we will localize the self-similar solutions. To motivate a localization, 
we observe that the perturbation $\hat z -\bar z$ satisfies 
\[
\partial_\tau \left( \hat z -\bar z\right) + (y+\frac{\ga+1}{4}\bar z ) \partial_y ( \hat z -\bar z) +  \text{ lower order or  nonlinear terms } =0
\]
with the transport velocity $y+\frac{\ga+1}{4}\bar z $. We will then construct the $\tau$-dependent cutoff function $\chi (\tau, y)$ whose initial cutoff is being transported by the same velocity $y+\frac{\ga+1}{4}\bar z$.

To be specific, consider a  nonnegative and smooth function $\chi_0:\R_{\geq 0}\to\R_{\geq 0}$ satisfying 
\be\label{def chi}
	\chi_0(y)=\begin{cases}
		 1,\ y\in[0, y_0],\\
		 0,\ y\in(2y_0,\infty),
	\end{cases} \ \text{ and }\ \ 0\geq \chi_0'(y) \geq -\frac{2}{y_0} 
\ee
for some $y_0>1$. 
We define the cut-off function $\chi$ by the solution of  
\be\label{pde of chi}
	\pa_\tau \chi+(y+\frac{\ga+1}{4}\bar z )\pa_y \chi =0,\qquad \chi(\tau_0,y) = \chi_0(y), 
\ee
Here $y_0$ and $\tau_0$ are sufficiently large constants to be determined later. The existence of $\chi$ solving \eqref{pde of chi} follows from the method of characteristics.   

We define the 
profile by 
\be\label{def mathring z}
	\mathring z(\tau,y) = \chi(\tau ,y) \bar z(y),  
\ee
which satisfies 
\be\label{pde of mathring z}
	\pa_\tau \mathring z+(y+\frac{\ga+1}{4}\mathring z )\pa_y \mathring z+(\mu-1)\mathring z = -\frac{\ga+1}{4} \bar z (1-\chi) \pa_y \mathring z.
\ee	

We next present the properties of $\chi$ and $\mathring z$ 
by beginning with the flow map induced by the transport velocity $y+\frac{\ga+1}{4}\bar z $: 
\begin{proposition}\label{prop Y(tau,p)}
	Let $\ga\in(1,3)$ and $\mu\in[\frac12,1)$. The solution to the ODE 
	\be\label{Lagrangian flow map}
		\begin{cases}
			\pa_\tau Y(\tau,p) = Y(\tau,p)+\frac{\ga+1}{4}\bar z(Y(\tau,p)),\\
			Y(\tau_0,p) = p\  \text{ for } p\geq 0,
		\end{cases}
	\ee
	is nondecreasing in $\tau$ and satisfies
	\be\label{range of flow map}
		Y(\tau,p) \in [pe^{(1-\mu)(\tau-\tau_0)},pe^{(\tau-\tau_0)} ].
	\ee
\end{proposition}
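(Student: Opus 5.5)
The plan is to compare the velocity field $V(y):=y+\frac{\ga+1}{4}\bar z(y)$ with the two linear fields $(1-\mu)y$ and $y$, and then integrate along the flow. Write $V(y)=y\,g(y)$ with
\[
g(y):=1+\frac{\ga+1}{4}\frac{\bar z(y)}{y}=1-\frac{\ga+1}{2}\bar U(y),
\]
using $\bar z=-2y\bar U$ from \eqref{z = -2y U}.

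By \eqref{B bc}, $0<\bar U<\frac{2\mu}{\ga+1}$ for $y>0$, with $\bar U(0)=\frac{2\mu}{\ga+1}$. Hence $g(y)\in(1-\mu,1)$ for $y>0$, and $g(0)=1-\mu$. Two consequences follow:
\[
(1-\mu)\,y\le V(y)\le y \quad\text{for } y\ge 0,
\]
and $V(0)=0$. Alternatively, one can note $V(0)=0$ and $V'(y)=1+\frac{\ga+1}{4}\pa_y\bar z\in[1-\mu,1)$ by Proposition \ref{prop bar z}(2), and integrate from $0$.

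Next, I handle existence, uniqueness and positivity. Since $\bar z$ is $C^1$ with bounded derivative (Proposition \ref{prop bar z}(1)), $V$ is globally Lipschitz on $[0,\infty)$. The ODE \eqref{Lagrangian flow map} therefore has a unique global solution for each $p\ge0$.

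For $p=0$, uniqueness together with $V(0)=0$ gives $Y\equiv0$, which satisfies \eqref{range of flow map} trivially.

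For $p>0$, uniqueness prevents the trajectory from reaching the equilibrium $0$, so $Y(\tau,p)>0$ for all $\tau$. Then $\pa_\tau Y=V(Y)\ge(1-\mu)Y>0$, so $Y$ is nondecreasing in $\tau$ (indeed strictly increasing).

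Finally, I derive the two-sided bound. For $\tau\ge\tau_0$, dividing by $Y>0$ gives
\[
1-\mu\le \pa_\tau \log Y(\tau,p)\le 1.
\]
Integrating from $\tau_0$ to $\tau$ with $Y(\tau_0,p)=p$ yields
\[
pe^{(1-\mu)(\tau-\tau_0)}\le Y(\tau,p)\le pe^{\tau-\tau_0},
\]
which is \eqref{range of flow map}. Equivalently, one can apply Grönwall's comparison to $\frac{d}{d\tau}\big(e^{-(1-\mu)\tau}Y\big)\ge0$ and $\frac{d}{d\tau}\big(e^{-\tau}Y\big)\le0$.

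None of these steps is a real obstacle. The only points needing care are the sign bound on $\bar U$, which is where the lemma's monotonicity and range are used, and the use of uniqueness to guarantee $Y>0$ before dividing.
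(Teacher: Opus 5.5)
Your proof is correct and is the paper's argument: the paper uses $\bar z=-2y\bar U$ to write $Y(\tau,p)=p\exp\big(\int_{\tau_0}^{\tau}(1-\frac{\ga+1}{2}\bar U(Y(\tau',p)))\,d\tau'\big)$ and then bounds the integrand in $[1-\mu,1]$ using \eqref{B bc}, which is exactly your integration of $\pa_\tau\log Y$. Your extra remarks (global Lipschitz continuity of $V$, and $Y>0$ for $p>0$ by uniqueness before dividing) spell out details the paper leaves implicit.
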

\begin{proof}
By using \eqref{z = -2y U}, we have   
	\be\label{eq Y(tau,p)}
		Y(\tau,p) 
        = p e^{\int_{\tau_0}^\tau 1-\frac{\ga+1}{2} \bar U(Y(\tau',p)) d\tau'}.
	\ee
   From \eqref{B bc}, we have $1-\frac{\ga+1}{2} \bar U(Y(\tau',p)) \in[1-\mu,1]$. It then follows that $Y(\tau,p)$ is nondecreasing in $\tau$
    and satisfies \eqref{range of flow map}. 
\end{proof}

\begin{lemma}\label{le: supp chi}
Let $\chi$ be a smooth solution to 
\eqref{pde of chi} with \eqref{def chi}. 
Then, $\chi$ has the following properties
\begin{enumerate}
\item[$(1)$]  $\text{supp}(\chi)\subset [0, 2y_0e^{\tau-\tau_0}) $,
\item[$(2)$] $\text{supp}(1-\chi)\subset [y_0e^{(1-\mu)(\tau-\tau_0)},\infty) $,
\item[$(3)$] $\chi(\tau,y)\in[0,1]$, $\pa_y \chi(\tau,y)\in[-\frac{2}{y_0},0]$ for $\tau>\tau_0$ and $y\geq0$,
\item[$(4)$] $|\pa_y^i \chi(\tau,y) |\leq A y^{-i}$ for $y\in[y_0,\infty)$ with $A=A(i)$.
\end{enumerate}
\end{lemma}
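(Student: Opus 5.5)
The plan is to use the method of characteristics: by \eqref{pde of chi}, $\chi(\tau,Y(\tau,p))=\chi_0(p)$, where $Y$ is the flow map of Proposition \ref{prop Y(tau,p)}. First I check that $p\mapsto Y(\tau,p)$ is a bijection of $[0,\infty)$ onto itself for each $\tau\ge\tau_0$. It is strictly increasing because characteristics of a Lipschitz ODE cannot cross (the velocity $V(y):=y+\frac{\ga+1}{4}\bar z(y)$ is smooth on $(0,\infty)$ and Lipschitz up to $0$ by Proposition \ref{prop bar z}(1)). It fixes $0$ since $V(0)=0$, and it is onto by \eqref{range of flow map}. Write $P(\tau,y)$ for the inverse, so that $\chi(\tau,y)=\chi_0(P(\tau,y))$.

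For (1) and (2): if $\chi(\tau,y)\neq0$ then $P(\tau,y)<2y_0$, and \eqref{range of flow map} gives $y=Y(\tau,P)\le Pe^{\tau-\tau_0}<2y_0e^{\tau-\tau_0}$. If $1-\chi(\tau,y)\ne0$ then $P>y_0$, so $y\ge Pe^{(1-\mu)(\tau-\tau_0)}> y_0e^{(1-\mu)(\tau-\tau_0)}$. For (3), $\chi\in[0,1]$ is immediate from $\chi_0\in[0,1]$. Next, $\pa_y\chi=\chi_0'(P)\,\pa_yP=\chi_0'(P)/\pa_pY$. The variational equation $\pa_\tau\pa_pY=V'(Y)\pa_pY$ gives
\[
\pa_pY=\exp\Big(\int_{\tau_0}^\tau V'(Y)\,d\tau'\Big).
\]
By Proposition \ref{prop bar z}(2), $V'=1+\frac{\ga+1}{4}\pa_y\bar z\in[1-\mu,1)$, so $\pa_pY\ge e^{(1-\mu)(\tau-\tau_0)}\ge1$. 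Hence $\pa_y\chi\in[\chi_0'(P),0]\subset[-\frac2{y_0},0]$.

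For (4), the main work is the higher derivatives, done by induction. An alternative is to commute with the vector field: $\pa_y$ applied to \eqref{pde of chi} gives $(\pa_\tau+V\pa_y)\pa_y^i\chi+\sum_{k=1}^{i}\binom ik\pa_y^kV\,\pa_y^{i+1-k}\chi=0$. The weighted quantity $y^i\pa_y^i\chi$ is then transported with a damping-type linear term. Since $(\pa_\tau+V\pa_y)y^i=iVy^{i-1}$, we get
\[
(\pa_\tau+V\pa_y)(y^i\pa_y^i\chi)=\big(iV/y-iV'\big)y^i\pa_y^i\chi-\sum_{k\ge2}\binom ik y^{k-1}\pa_y^kV\,y^{i+1-k}\pa_y^{i+1-k}\chi .
\]
By \eqref{payi z y>>1}, $|y^{k-1}\pa_y^kV|\le Ay^{-\mu}$ for $k\ge2$ on $y\ge y_0$. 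Also $iV/y-iV'=-\frac{i(\ga+1)}{4}(\pa_y\bar z-\bar z/y)$, which is $O(y^{-\mu})$ there. Along characteristics $Y\ge y_0e^{(1-\mu)(\tau-\tau_0)}$ on the relevant region, so $\int Y^{-\mu}d\tau$ converges uniformly by Proposition \ref{prop Y(tau,p)}. Gronwall along characteristics, with the induction hypothesis bounding the lower-order terms, then gives $|y^i\pa_y^i\chi|\le A(i)$.

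At time $\tau_0$ the initial bound $|y^i\chi_0^{(i)}|\le A$ holds because $\chi_0^{(i)}$ is supported in $[y_0,2y_0]$ with size $O(y_0^{-i})$; I take $\chi_0$ as a rescaling of a fixed profile. On $y<y_0e^{(1-\mu)(\tau-\tau_0)}$ all derivatives vanish by (2).

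The main obstacle is ensuring that the characteristics through the support of $\pa_y\chi$ stay in $y\ge y_0$, where \eqref{payi z y>>1} applies. This is guaranteed by (2) together with $P\in[y_0,2y_0]$, but the constant in \eqref{payi z y>>1} must be uniform on $[y_0,\infty)$. I handle this by taking $y_0$ large, which is allowed since $y_0$ is to be chosen later.
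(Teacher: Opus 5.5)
Your proposal is correct and follows essentially the same route as the paper: characteristics with the flow-map bounds \eqref{range of flow map} give (1)--(3), and your formula $\pa_y\chi=\chi_0'(P)/\pa_pY$ is exactly the paper's integrating-factor identity \eqref{pa_y chi}; (4) is then proved by induction on $i$ along characteristics with $p\ge y_0$. The only difference is cosmetic: you transport $y^i\pa_y^i\chi$ and control the coefficient $i(V/y-V')=\frac{i(\ga+1)}{2}\,y\,\pa_y\bar U$ through the integrability of $Y^{-\mu}$, whereas the paper extracts $Y^{-i}$ from the integrating factor and drops that same factor using $\pa_y\bar U\le 0$; your explicit choice of $\chi_0$ as a rescaled fixed profile usefully makes $A$ independent of $y_0$, and taking $y_0$ large is unnecessary, since \eqref{payi z y>>1} holds uniformly on $[1,\infty)$ after enlarging the constant ($\bar z$ is smooth on $(0,\infty)$).
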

\begin{proof}
	(1) and (2) follow directly from Proposition \ref{prop Y(tau,p)}. 
    For (3), 
    we first use \eqref{pde of chi} and \eqref{Lagrangian flow map} to derive
	\[
		\chi(\tau,Y(\tau,p)) = \chi(\tau_0,Y(\tau_0,p)) \in[0,1].
	\]
	Hence, the first claim in (3) follows. 
	 We then apply $\pa_y$ to \eqref{pde of chi} to obtain
	\begin{align*}
		\pa_\tau \pa_y\chi+(y+\frac{\ga+1}{4}\bar z )\pa_y \pa_y\chi = -(1+\frac{\ga+1}{4}\pa_y \bar z) \pa_y\chi.
	\end{align*}
	By using the flow map $Y(\tau,p)$ defined in \eqref{Lagrangian flow map}, we have 
	\begin{align*}
		\pa_\tau \pa_y\chi(\tau,Y(\tau,p))= -(1+\frac{\ga+1}{4}\pa_y \bar z) \pa_y\chi(\tau,Y(\tau,p)).
	\end{align*}
	We then integrate the above ODE by using the integral factor 
	\be \label{def IF}
	f(\tau) = \int (1+\frac{\ga+1}{4}\pa_y \bar z (Y(\tau',p) )) d\tau' 
	\ee 
	to obtain
	\be\label{pa_y chi}
		\pa_y\chi(\tau,Y(\tau,p)) = \pa_y\chi(\tau_0,Y(\tau_0,p)) e^{-\int_{\tau_0}^\tau (1+\frac{\ga+1}{4}\pa_y \bar z(Y(\tau',p)))\ d\tau'}.
	\ee
    By using Proposition \ref{prop bar z} (2) and \eqref{range of flow map}, it then follows that
	\begin{align*}
		|\pa_y\chi(\tau,Y(\tau,p))| \leq |\pa_y\chi(\tau_0,Y(\tau_0,p))| e^{-(1-\mu)(\tau-\tau_0)} \leq |\pa_y\chi(\tau_0,Y(\tau_0,p))|.
	\end{align*}
	Together with \eqref{def chi}, it leads to the second claim in (3).  
    
    For (4), first note that since $\chi_0$ has a compact support, there exists a constant $A:=A(i)$ such that
	\be\label{ineq pa_yi chi}
		\mid\pa_y^i \chi_0(y)\mid \leq A y^{-i} \text{ for any } y \ge y_0. 
	\ee
	We then apply $\pa_y^i $ to  \eqref{pde of chi} and obtain
	\begin{align*}
		\pa_\tau \pa_y^i\chi+(y+\frac{\ga+1}{4}\bar z )\pa_y \pa_y^{i}\chi = -i(1+\frac{\ga+1}{4}\pa_y \bar z(y)) \pa_y^i\chi - \sum_{j=1}^{i-1} c_{ij} \pa_y^{j+1}\bar z (y) \pa_y^{i-j}\chi
	\end{align*}
	where the last summation appears only when $i\geq 2$ and $c_{ij}$ are constants. Along $Y(\tau,p)$, we have   
\begin{align*}
		\pa_\tau \pa_y^i\chi(\tau,Y(\tau,p)) = -i(1+\frac{\ga+1}{4}\pa_y \bar z(Y(\tau,p))) \pa_y^i\chi (\tau,Y(\tau,p))- \sum_{j=1}^{i-1} c_{ij} \pa_y^{j+1}\bar z (Y(\tau,p)) \pa_y^{i-j}\chi(\tau,Y(\tau,p)).
	\end{align*}
	Then, by using the integral factor $if(\tau)$ where $f(\tau)$ is defined in \eqref{def IF}, we obtain
	\be\label{eq payi chi}
    \begin{split}
		\pa_y^i\chi(\tau,Y(\tau,p)) &= \pa_y^i\chi(\tau_0,Y(\tau_0,p)) e^{i(f(\tau_0)-f(\tau))}\\
        &- \sum_{j=1}^{i-1}c_{ij}\int_{\tau_0}^\tau e^{i(f(\tau')-f(\tau))}\pa_y^{j+1}\bar z (Y(\tau',p)) \pa_y^{i-j}\chi(\tau',Y(\tau',p)) d\tau'. 
	\end{split}
    \ee
We first observe that if $p\le y_0$, $\chi_0=1$. Thus $\chi (\tau, Y(\tau,p)) =1$ for each $\tau\ge \tau_0$ and $p\le y_0$, and hence for  $i\ge 1$, $\pa_y^i \chi(\tau,Y(\tau,p))=0$ for $\tau\ge \tau_0$ and $p<  y_0$. Therefore, to prove the claimed bound in (4), it is enough to consider $p \ge y_0$. 
	We will prove it by induction on $i$.  
    When $i=1$, by using \eqref{z = -2y U}, \eqref{B bc},  \eqref{eq Y(tau,p)} and \eqref{ineq pa_yi chi}, we have
	\begin{align*}
		|\pa_y \chi(\tau,Y(\tau,p))| &=| \pa_y\chi(\tau_0,Y(\tau_0,p)) |e^{-\int_{\tau_0}^\tau 1+\frac{\ga+1}{4}\pa_y \bar z(Y(\tau,p))\ d\tau}\\
		&\leq A p^{-1} e^{-\int_{\tau_0}^\tau 1-\frac{\ga+1}{2} \bar U(Y(\tau,p)) d\tau} e^{\int_{\tau_0}^\tau\frac{\ga+1}{2} Y(\tau,p) \pa_y \bar U(Y(\tau,p))d\tau }\leq A (Y(\tau,p))^{-1}.
	\end{align*}
	We now assume that for any $i< m$, $|\pa_y^i \chi(\tau,y)|\leq A y^{-i}$. Then from \eqref{eq payi chi}, we obtain 
	 \begin{align*}
	 	|\pa_y^m \chi(\tau,Y(\tau,p))| &\leq A e^{m(f(\tau_0)-f(\tau))} p^{-m} +A  \int_{\tau_0}^\tau e^{m( f(\tau')-f(\tau))} \big(Y(\tau',p)\big)^{-\mu-m} d\tau'.
	 \end{align*}
     where we have also used Proposition \ref{prop bar z} (4) and \eqref{ineq pa_yi chi}. 
	 By using \eqref{z = -2y U}, \eqref{eq Y(tau,p)} and \eqref{def IF}, we may rewrite the right-hand side as  
	 \begin{align*}
	 	 &e^{m(f(\tau_0)-f(\tau))} p^{-m} +\int_{\tau_0}^\tau e^{m( f(\tau')-f(\tau))} \big(Y(\tau',p)\big)^{-\mu-m} d\tau'\\
	 &=  \big(Y(\tau,p)\big)^{-m}\Bigg( e^{m\int_{\tau_0}^{\tau} \frac{\ga+1}{2} Y(\tau',p) \pa_y \bar U(Y(\tau',p)) d\tau'} + \int_{\tau_0}^\tau e^{m\int_{\tau'}^{\tau} \frac{\ga+1}{2} Y(\tau'',p) \pa_y \bar U(Y(\tau'',p)) d\tau''} \big(Y(\tau',p)\big)^{-\mu} d\tau' \Bigg).
	 \end{align*}
    The desired bound follows once the boundedness of the terms in the bracket is shown. 
 The first term in the bracket is bounded for any $\tau\geq \tau_0$, since $Y(\tau',p) \pa_y \bar U(Y(\tau',p))\leq0$ holds for any $\tau'\geq \tau_0$. Concerning the second term, by \eqref{range of flow map}, we first observe that $\big(Y(\tau',p)\big)^{-\mu}\leq p^{-\mu} e^{-\mu(1-\mu) (\tau'-\tau_0)} \le y_0^{-\mu} e^{-\mu(1-\mu) (\tau'-\tau_0)}  $. 
	 We then further bound
	 \[
	 	 \int_{\tau_0}^\tau e^{m\int_{\tau'}^{\tau} \frac{\ga+1}{2} Y(\tau'',p) \pa_y \bar U(Y(\tau'',p)) d\tau''} \big(Y(\tau',p)\big)^{-\mu} d\tau' \leq y_0^{-\mu} \int_{\tau_0}^\tau e^{-\mu(1-\mu) (\tau'-\tau_0)} d\tau' \le (\mu(1-\mu))^{-1}
	 \]
where we have used $y_0>1$ in the last inequality. 
     This completes the proof of (4). 
\end{proof}

We also record 
the following properties of $\mathring z$: 
\begin{proposition}\label{prop: mathring z}
Let $\ga\in(1,3)$ and $\mu\in[\frac12,1)$. The profile $\mathring z$ exhibits different asymptotic behavior in the regions $y\in[0,1]$ and $y\in[1,\infty)$. More precisely, there exists a positive constant $A:=A(\ga, \mu)$ such that
    	\beqa |\pa_y^i\mathring z|&=|\pa_y^i \bar z | \qquad \  \text{ when }\quad y\in[0,1],\\
    |\pa_y^i \mathring z| &\leq A y^{1-\mu-i} \ \  \text{ when }\quad y\in[1,\infty).\label{profile be}
    \eeqa
\end{proposition}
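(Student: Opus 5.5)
The plan is to treat the two regions separately, reducing everything to Lemma \ref{le: supp chi} and Proposition \ref{prop bar z}.

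\emph{Region $y\in[0,1]$.} By Lemma \ref{le: supp chi} (2), $1-\chi$ is supported in $[y_0e^{(1-\mu)(\tau-\tau_0)},\infty)$. Since $y_0>1$ and $\tau\ge\tau_0$, this support lies in $(1,\infty)$. Hence $\chi\equiv1$ on a neighbourhood of $[0,1]$ for every $\tau\ge\tau_0$, so $\mathring z=\bar z$ there together with all $y$-derivatives. This gives the first identity.

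\emph{Region $y\in[1,\infty)$.} Expand $\pa_y^i(\chi\bar z)$ by Leibniz:
\[
\pa_y^i\mathring z=\sum_{k=0}^{i}\binom{i}{k}\pa_y^k\chi\,\pa_y^{i-k}\bar z .
\]
The term $k=0$ is $\chi\,\pa_y^i\bar z$. We have $0\le\chi\le1$ by Lemma \ref{le: supp chi} (3). For $\pa_y^i\bar z$ we need $|\pa_y^i\bar z|\le Ay^{1-\mu-i}$ on all of $[1,\infty)$, not only for $y\gg1$. Proposition \ref{prop bar z} (4) gives this for $y\ge Y_*$. On the compact interval $[1,Y_*]$ the function $\bar z$ is smooth: $\bar U$ is smooth away from $y=0$, by \eqref{ode B} and the implicit relation \eqref{B formula}, and the denominator $(\ga+1)\bar U-2$ stays away from $0$ because $\bar U\le\frac{2\mu}{\ga+1}<\frac{2}{\ga+1}$. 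So its derivatives are bounded there, and $y^{1-\mu-i}$ is bounded below on that interval. Enlarging $A$ covers this range.

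\emph{Terms with $k\ge1$.} Here $\pa_y^k\chi$ vanishes unless $y\ge y_0e^{(1-\mu)(\tau-\tau_0)}\ge y_0$. In that range Lemma \ref{le: supp chi} (4) gives $|\pa_y^k\chi|\le Ay^{-k}$. Combined with $|\pa_y^{i-k}\bar z|\le Ay^{1-\mu-(i-k)}$, each product is bounded by $Ay^{1-\mu-i}$. For $i-k=0$ we use \eqref{z y>>1} directly, $|\bar z|\le Ay^{1-\mu}$ for $y\ge1$, again with compactness on $[1,Y_*]$. Summing the finitely many terms yields \eqref{profile be} with $A=A(\ga,\mu,i)$. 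The constant is independent of $\tau$ and $y_0$, since the bounds in Lemma \ref{le: supp chi} (4) are uniform.

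\emph{Main obstacle.} The only delicate point is the intermediate region $[1,Y_*]$, where Proposition \ref{prop bar z} (4) is stated only asymptotically. I would handle it by the smoothness-on-compacts argument above. This requires $\bar U$ to be $C^\infty$ on $(0,\infty)$, which follows from the ODE \eqref{ode B}: its right-hand side is smooth in $\bar U$ on $(0,\frac{2\mu}{\ga+1})$ and $y>0$. One should also note that $A$ may depend on $K$, which is fixed.
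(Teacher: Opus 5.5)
Your argument is correct and follows the paper's route: the paper's proof says only that the bounds are a direct consequence of Proposition \ref{prop bar z}, Lemma \ref{le: supp chi} and the definition \eqref{def mathring z}. Your two steps are the natural unpacking of that sentence, namely the support property of $1-\chi$ on $[0,1]$ and the Leibniz expansion using Lemma \ref{le: supp chi}~(3)--(4). You also correctly fill in the point the paper leaves tacit: on the intermediate range $[1,Y_*]$, where Proposition \ref{prop bar z}~(4) is only asymptotic, the smoothness of $\bar z$ on $(0,\infty)$ together with compactness gives the bound.
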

\begin{proof}
	It is a direct consequence of Proposition \ref{prop bar z}, Lemma \ref{le: supp chi}, and the definition \eqref{def mathring z}.
\end{proof}

\subsection{Main result} 

We introduce the perturbations $(\tilde z, \tilde w)$ around the profile $(\mathring z, 0)$: 
\be
	\begin{aligned}
		\hat z(\tau,y) = \mathring z(\tau, y) + \tilde z(\tau, y) \ \ \text{and} \ \  \hat w(\tau,y) =\tilde w(\tau, y).
	\end{aligned}
\ee
From \eqref{ss EE zw} and \eqref{pde of mathring z}, the equations of the perturbation $(\tilde z, \tilde w)$ can be written in the following form 
\be\label{PDE of tilde z w}
	\begin{aligned}
		\partial_\tau \tilde z + \Big(y+\frac{\ga+1}{4}\mathring z \Big) \partial_y \tilde z + \Big(\frac{\ga+1}{4}\partial_y \mathring z +\mu-1\Big)\tilde z +\frac{3-\ga}{4}\partial_y \mathring z \ \tilde w +(\frac{\ga+1}{4}\tilde z+\frac{3-\ga}{4}\tilde w)\partial_y \tilde z=& \\ \qquad \frac{\ga+1}{4} \bar z (1-\chi) \pa_y \mathring z ,&\\
		\partial_\tau \tilde w + \Big(y+\frac{3-\ga}{4}\mathring z \Big) \partial_y \tilde w + (\mu-1)\tilde w +(\frac{3-\ga}{4}\tilde z+\frac{\ga+1}{4}\tilde w)\partial_y \tilde w= 0.&
	\end{aligned}
\ee
Two transport velocities for $\tilde z$ and $\tilde w$ are both outgoing, and they will produce stabilizing effect (damping) for high order derivatives.  
The exact number of derivatives required to gain positive damping depends on $\mu$, more precisely $\beta$, and such a damping effect is crucial for nonlinear analysis. 

We now define the regularity index $\ndmu$ by 
\be\label{def ndmu}
        \ndmu = \lfloor  \beta + \frac12  \rfloor+1  = \lfloor \frac{3-\mu}{2(1-\mu)} \rfloor +1 .
\ee
We seek the perturbed solutions $(\tilde z, \tilde w)$ in the form of  
\be\label{sol decompose}
	\tilde z(\tau,y) = \sum_{i=2}^{\ndmu-1} \varphi(y) \frac{z_i(\tau)}{i!}y^i + Z(\tau,y) \ \ \text{ and } \ \   \tilde w(\tau,y) = \sum_{i=2}^{\ndmu-1} \varphi(y) \frac{w_i(\tau)}{i!}y^i + W(\tau,y), 
\ee
where  
$  \varphi(y) $ is a smooth cutoff function satisfying  
\be
   \varphi(y) = \begin{cases}
        1,\ y\in[0,\frac12],\\
        0,\ y\geq 1,
    \end{cases}\ \text{ and }\ \ 0\geq \varphi'(y) \geq -4.
\ee
The first summations in \eqref{sol decompose} are localized polynomials in $y$ whose coefficients are determined by the modulation ODEs, while remainders $(Z,W)$ vanish at $y=0$ up to $(\ndmu-1)$-order and they will be analyzed by using energy estimates and bootstrap argument.

To state the main result, we denote the set of $\mu$ whose corresponding $\beta$ \eqref{beta} is a half-integer by 
\be
\mathcal H = \big\{\mu\in[\frac12,1): \beta+\frac12 \in \mathbb N\big\}, 
\ee 
and 
a $2(\ndmu-1)$-dimensional open ball around $\vec v_0$ with radius $r$ by 
\be\label{set B}
    B(\vec v_0,r):=\left\{\vec v\in\mathbb R^{2\ndmu-2}: \ \left|\vec v-\vec v_0 \right| < r  \right\}.
\ee

Our main theorem is the following: 
\begin{theorem}\label{thm 4.1}
     Let $\ga\in(1,3)$ and $\mu\in[\frac12,1)$ be given. 
     Then there exist sufficiently large $\tau_0:=\tau_0(\ga,\mu)$, 
     sufficiently small $\sigma_0:=\sigma(\tau_0)$,
     and $\epzero:=\epzero(\ga,\mu)>0$   such that the following
     holds. There exist initial data $(\tilde z(\tau_0,y), \tilde w(\tau_0,y))$ in the form of \eqref{sol decompose}
     such that 
     \[\pa_y^i \espz(\tau_0,0)=\pa_y^i\espw(\tau_0,y)=0,\quad \text{ for } \quad i\in\{0,1,\dots,\ndmu-1\},\]
     and
     \be\label{initialziwi}
     (z_2, \dots, z_{\ndmu-1}; w_2,\dots, w_{\ndmu-1}) \in 
     \begin{cases}
     B(0,e^{-\epzero \tau_0})\cap I^{\lfloor \beta \rfloor}, & \text{ if } \mu \in [\frac12,1)\setminus \mathcal H \\
     B(0,e^{-\epzero \tau_0})\cap I^{\lfloor \beta \rfloor}\cap \{w_2=0\},&
     \text{ if } \mu \in \mathcal H 
     \end{cases}
     \ee 
     where $I^{\lfloor \beta \rfloor}$ is defined in \eqref{boundary condition}, and $\norm{\espz(\tau_0,\cdot),\espw(\tau_0,\cdot)}_{H^{\ndmu}}< \sigma_0$,  leading to a well-posed $H^{\ndmu}$  
     solution $(\tilde z(\tau),\tilde w(\tau))$ on any finite time interval 
     to \eqref{PDE of tilde z w} 
     with the asymptotics:
      \beqa\label{decay fact}
        \norm{(\pa_y\tilde z-\pa_y\mathring z,\pa_y\tilde w)(\tau)}_{L^\infty_y} \to 0 \quad \text{ as } \tau\to \infty
     \eeqa
     where the modified self-similar profile $\mathring z$ as defined in \eqref{def mathring z} has cutoff scale $y_0=e^{\tau_0}$ at $\tau=\tau_0$. 
    Moreover, 
    for each $\mu\in[\frac12,1)\setminus \mathcal H$, these initial data form a co-dimensional $\lfloor \beta+1 \rfloor$ subset of $H^\ndmu$, while for $\mu\in \mathcal H$, with an extra constraint $w_2\equiv0$, these initial data form a co-dimensional $\lfloor \beta+2 \rfloor$ subset of $H^\ndmu$.   
\end{theorem}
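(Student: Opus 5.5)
The plan follows the scheme outlined in the introduction: decompose, derive modulation ODEs, estimate the remainder, and close with a bootstrap plus a shooting (stable-manifold) argument.

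\textbf{Step 1: modulation ODEs.} Substitute the ansatz \eqref{sol decompose} into \eqref{PDE of tilde z w} and Taylor-expand at $y=0$ up to order $\ndmu-1$. Requiring that $(Z,W)$ keep vanishing to order $\ndmu-1$ at $y=0$ fixes ODEs for $(z_i,w_i)$. On $[0,\frac12]$ we have $\varphi=1$ and $\chi=1$ (by Lemma \ref{le: supp chi}(2)), so the Taylor data of $\mathring z$ coincide with those of $\bar z$ in \eqref{z y<<1}. The linearized coefficient of the $i$-th mode is then
\[
\Big(i-\tfrac{4\mu}{\ga+1}\tfrac{\ga+1}{4}\cdot(\cdot)\Big)\text{-type rates}:\qquad \lambda^z_i=i(1-\mu)-1,\qquad \lambda^w_i=i\Big(1-\mu\tfrac{3-\ga}{\ga+1}\Big)+\mu-1.
\]
These are triangular, coupled through quadratic terms and through the $y^\beta$ correction of $\bar z$. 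The modes with $i\le\lfloor\beta\rfloor$ (and $i=\beta$ when $\beta\in\mathbb N$) are non-decaying for $z$. I expect them to give exactly the $\lfloor\beta+1\rfloor$ unstable directions, counting also the low-order $w$ conditions encoded in $I^{\lfloor\beta\rfloor}$, which reflect the symmetries noted in Remark \ref{remark 2.10}. Next I construct the stable manifold $I^{\lfloor\beta\rfloor}$ of this finite-dimensional system, as in Lemma \ref{lemma 3.7}: the stable modes decay like $e^{-\epone\tau}$, and the unstable modes are chosen by a Brouwer/topological shooting argument (or by a Lipschitz graph, since the forcing from $(Z,W)$ is small) so that all $z_i,w_i$ decay. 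The half-integer case requires extra care. There the product $w_2\,y^2\cdot\pa_y\mathring z$ generates a $y^{\beta+1}$ term at exactly the critical regularity $\ndmu$, so I would impose $w_2\equiv0$. This is consistent because $w_2=0$ is propagated by the $w$-equation when $\tilde w$ starts with this vanishing, and it explains the extra codimension.

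\textbf{Step 2: linear coercivity for $(Z,W)$.} Write the $(Z,W)$ system with sources coming from the modulation part, from the profile error $\frac{\ga+1}{4}\bar z(1-\chi)\pa_y\mathring z$, and from nonlinear terms. I then estimate two weighted quantities. The first is $E_0=\|Z/y^{\ndmu}\|_{L^2}^2+\|W/y^{\ndmu}\|_{L^2}^2$, for which integration by parts against the transport gives the damping coefficient $\ndmu\,(1+\frac{\ga+1}{4}\pa_y\mathring z)-\frac12(\ldots)+\mu-1$. The second is $E_1=\|\pa_y^{\ndmu}Z\|^2+\|\pa_y^{\ndmu}W\|^2$, whose damping is similar. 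Proposition \ref{prop bar z}(2) gives $1+\frac{\ga+1}{4}\pa_y\bar z\ge1-\mu$, and the choice $\ndmu=\lfloor\beta+\frac12\rfloor+1$ is precisely what makes $(\ndmu-\frac12)(1-\mu)+\mu-1>0$. So both energies are strictly dissipative, up to lower-order commutators that Hardy's inequality controls using the vanishing of $Z$ at $0$. In the far field, the bounds \eqref{profile be} and Lemma \ref{le: supp chi}(4) show that the commutators decay like $y^{-\mu}$ and are absorbed. The cutoff error is supported in $y\ge y_0e^{(1-\mu)(\tau-\tau_0)}$ with $y_0=e^{\tau_0}$, so it is exponentially small.

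\textbf{Step 3: nonlinear bootstrap and conclusion.} I assume $|z_i|,|w_i|\le e^{-\epone\tau}$, $E_0+E_1\le\sigma_0 e^{-\eptwo\tau}$, and a possibly growing bound on $\|(Z,W)\|_{L^2}$. These are improved using the ODE decay from Step 1, the energy inequalities from Step 2, and interpolation/Sobolev bounds giving $\|\pa_yZ\|_{L^\infty}\lesssim E_0^{a}E_1^{1-a}$ to control the quadratic terms. A continuity/shooting argument over the unstable modal data then produces the finite-codimensional set. Local well-posedness in $H^{\ndmu}$ is standard for this symmetric hyperbolic system. The decay \eqref{decay fact} follows from $\pa_y\tilde z=\sum\varphi z_i(\cdot)+\pa_yZ$ together with the interpolation bound.

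\textbf{Main obstacle.} I expect the hardest part to be the top-order energy $E_1$ for non-integer $\beta$. There $\pa_y^{\ndmu}$ hitting $\mathring z$ produces the singular weight $y^{\beta-\ndmu}$, and $\pa_y^{\ndmu}$ of the interaction $\varphi z_i y^i\pa_y\mathring z$ is at best borderline in $L^2$ near $0$. My first attempt is to pair these terms with the vanishing of $Z$ and use weighted Hardy inequalities, checking exponent by exponent that $2(\beta-\ndmu)+2i>-1$. Where this fails, which is the half-integer case with $i=2$, I fall back on the constraint $w_2=0$.
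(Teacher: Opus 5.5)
Your Step~3 has a genuine gap. The bootstrap you set up cannot control $\|\partial_y Z\|_{L^\infty}$. Every quadratic term needs this quantity: after integrating the transport nonlinearity by parts you face $\int \partial_y Z\,|\partial_y^{\ndmu} Z|^2\,dy$ and $\int \partial_y Z\,|Z/y^{\ndmu}|^2\,dy$. It is also what yields \eqref{decay fact}.

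No inequality $\|\partial_y Z\|_{L^\infty}\lesssim E_0^{a}E_1^{1-a}$ can hold. Take $Z_\lambda=\lambda^{-1}f(\lambda y)$ with $f\in C_c^\infty((1,2))$. Then $\|\partial_y Z_\lambda\|_{L^\infty}=\|f'\|_{L^\infty}$ stays fixed, while $E_0,E_1\sim\lambda^{2\ndmu-3}\to 0$ as $\lambda\to 0$.

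Your growing $L^2$ bound does not rescue this. That norm really grows like $e^{(\frac32-\mu)\tau}$, because of far-field transport $y\partial_y$ with anti-damping $\mu-1$. The interpolation $\|\partial_y Z\|_{L^\infty}\lesssim \|Z\|_{L^2}^{1-\frac{3}{2\ndmu}}\|\partial_y^{\ndmu}Z\|_{L^2}^{\frac{3}{2\ndmu}}$ would then require top-order decay far beyond what the coercivity gives. For $\mu=\frac12$, $\ndmu=3$ you would need a rate above $1$, while the coercivity constant is only $\frac14$.

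The missing ingredient is the paper's intermediate level: a separate $\dot H^1$ energy estimate (Lemma~\ref{lemma nonlinear H^1}) with bootstrap $\|\partial_y(Z,W)\|_{L^2}\le 2Be^{-(\mu-\frac12)\eptwo\tau}$. Its damping is only $\mu-\frac12\ge 0$. The anti-damping contribution $-\frac{3(\gamma+1)}{8}\partial_y\mathring z\,|\partial_y Z|^2$ is absorbed by splitting at $y=e^{\alpha\tau}$. On $[0,e^{\alpha\tau}]$ one uses Hardy together with the decaying $\dot H^{\ndmu}$ norm. Beyond that point one uses $|\partial_y\mathring z|\lesssim y^{-\mu}$. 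Lemma~\ref{Linfty decay} then gives $\|\partial_y Z\|_{L^\infty}\lesssim \|\partial_y Z\|_{L^2}^{1-\frac{1}{2(\ndmu-1)}}\|\partial_y^{\ndmu}Z\|_{L^2}^{\frac{1}{2(\ndmu-1)}}$, which decays. This is what closes the nonlinear estimates and proves \eqref{decay fact}. The growing $L^2$ level plays no role in that closure; it is used only for the $L^2_x$ bounds in Theorem~\ref{thm 1.1}.

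Several further points need correcting.

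First, the modulation system is exactly closed. The transport speed vanishes at $y=0$ and $(Z,W)$ vanish to order $\ndmu-1$. So the equation for $(z_i,w_i)$ involves only $z_j,w_j$ with $j\le i$, and $\partial_y^m\bar z(0)$ with $m\le i-1\le\lfloor\beta\rfloor$. These derivatives vanish for $m\ge 2$, so the $y^\beta$ part of $\bar z$ does not enter. There is no forcing from $(Z,W)$, hence no shooting is needed: the stable set is the explicit polynomial graph $I^{\lfloor\beta\rfloor}$ of Lemma~\ref{lemma 3.7}, fixed at $\tau_0$.

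Second, the codimension $\lfloor\beta+1\rfloor$ is not a count of unstable modes. It consists of the $\lfloor\beta\rfloor-1$ relations defining $I^{\lfloor\beta\rfloor}$ plus $z_1=w_1=0$, which is implicit in your ansatz. In fact $w_1$ is damped, at rate $\frac{2(\gamma-1)\mu}{\gamma+1}$. It is set to zero to normalize the blowup time and, for $\mu\notin\ssmu$, to keep its interaction with the $y^\beta$ part of $\mathring z$ in $\dot H^{\ndmu}$.

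In the half-integer case, note also that the $z_2$ terms in $\mathcal T_z$ carry the same $y^{\beta+1}$ singularity as the $w_2$ terms. They disappear only because $w_2=0$ forces $z_2=q_2(0)=0$ through $I^{\lfloor\beta\rfloor}$.

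Third, your damping threshold omits the term $\frac{\gamma+1}{4}\partial_y\mathring z(0)=-\mu$ from the zeroth-order coefficient of $\mathcal L_z$. The correct condition is $(\ndmu-\frac12)(1-\mu)-1>0$, i.e.\ $\ndmu>\beta+\frac12$. Your expression, by contrast, is positive for every $\ndmu\ge 2$.

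Likewise, square integrability of $\partial_y^{\ndmu}(y^i\partial_y\mathring z)$ near $0$ requires $2(\beta-1+i-\ndmu)>-1$, not $2(\beta-\ndmu)+2i>-1$. Only the former actually fails at $i=2$ when $\mu\in\mathcal H$.
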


We make several remarks below. 

\begin{remark} The choice of $\ndmu$ \eqref{def ndmu} in our theorem is related to the dissipative property of the linearized operator $\lz$ in \eqref{4.5} in $L^2_{(y^{-2\ndmu}dy)}$ and $\dot H^\ndmu$. The modulation component in the decomposition \eqref{sol decompose} is truncated at order $\ndmu-1$. We distinguish three cases:
\begin{itemize}
    \item[(i)] if $\mu\in\ssmu$, then $\ndmu-1= \beta$;
    \item[(ii)] if $\mu\notin\ssmu$ with $ \lfloor \beta+\frac12\rfloor=\lfloor \beta\rfloor $, then $\ndmu-1 = \lfloor \beta\rfloor$;
    \item[(iii)] if $\mu\notin\ssmu$ with $ \lfloor \beta+\frac12\rfloor = \lfloor \beta+1\rfloor $, then $\ndmu-1 =\lfloor \beta\rfloor+1$. Consequently, for such values of $\mu$, the modulation requires one additional degree of control compared to cases (i) and (ii). 
\end{itemize}
By Remark~\ref{remark 3.7}, for $\mu\notin\ssmu$, we have the control of modulation up to the order $\lceil \beta\rceil$, which is equal to $\lfloor \beta\rfloor+1$ for $\mu\notin \ssmu$. Hence, the maximal available order of modulation control coincides with the truncation order in case (iii). Hence, in all cases, the chosen truncation order $\ndmu-1$ is compatible with the available modulation control. 
\end{remark}
\begin{remark}
    For $\mu\in \mathcal H$, an additional constraint $w_2\equiv 0$ is imposed.  This constraint originates from the interaction between the self-similar profile $\mathring z$ and the modulation component of the perturbation in our functional space $H^\ndmu$. If $w_2\neq 0$, 
    the resulting interaction term fails to be integrable in a neighborhood of $y=0$. For further discussion, we refer to Appendix \ref{Appendix C}. 
    \end{remark}

\begin{remark}\label{remark 2.10}
    By Remark~\ref{remark 3.7}, we observe that for $\mu\in[\frac12,\frac{3}{5})$, there exist three unstable directions; see also Proposition~\ref{trivial mode}. These directions are associated with the constraints $z_1=w_1=0$ and $z_2=q_2(w_2)$. 
    In the special case $\mu=\frac12$, the condition $z_1=w_1=0$ corresponds to the symmetries generated by time translation and Galilean transformation, whereas the relation $z_2=q_2(w_2)$ is associated with the scaling invariance. In contrast, when $\mu\in(\frac12,\frac{3}{5})$, only the constraints $z_1=w_1=0$ remain related to time translation, while $z_2=q_2(w_2)$ does not relate to any underlying symmetry of the system in the $H^\ndmu$ topology. In this sense, the case $\mu=\frac12$ may be viewed as the most stable one. 
\end{remark}

\section{Modulation equations and ODE system at $y=0$}\label{section 3}

In this section, we will analyze the ODE's satisfied by the modulation parameters and the coefficients $\big(\pa_y^i\tilde z(\tau,0),\pa_y^i\tilde w(\tau,0)\big)$. We denote 
\be\label{notation ziwi}
	z_i(\tau) = \pa_y^i \tilde z (\tau,0) \ \text{ and } \ w_i(\tau) = \pa_y^i \tilde w (\tau,0).
\ee
Since we are looking for solutions confined by the stationary vacuum boundary at $x=0$ up to the first gradient blowup, we must have 
\be\label{zero BC}
    \hat z(\tau,0) = \hat w (\tau,0) = 0,\ \text{ for all }\tau>\tau_0, 
\ee 
and hence $z_0=w_0\equiv 0$. 
This condition naturally removes the space translation and the Galilean transformation. We further impose $z_1 = w_1 \equiv 0$, which removes the time translation and thus the first gradient blowup of our solutions occurs at $t=0$; see Appendix \ref{sec: TT} for further discussion.  We remark that for $\mu\notin\ssmu$, the condition ensures that the interaction between the resulting modulation and the profile $\mathring z$ does not create a non-integrable singularity near $y=0$ in the analysis of $(Z,W)$ in the $H^\ndmu$ topology.

In what follows, we derive the ODE system satisfied by $(z_i,w_i)$ for $i\ge 2$, analyze the system, identify the unstable and stable coefficients for each $\mu\in[\frac{1}{2},1)$, and subsequently establish their corresponding estimates.

To derive the equations, we apply $\pa_y^i$ to \eqref{PDE of tilde z w} and isolate the $(i+1)$-th order of derivatives and $i$-th order of derivatives of unknowns to obtain 
\begin{align}
    \pa_\tau \pa_y^i\tilde z &+ \Big(y+ \frac{\ga+1}{4} \mathring z+\frac{\ga+1}{4}\tilde z+ \frac{3-\ga}{4}\tilde w\Big) \pa_y^{i+1}\tilde z\notag \\
   & +\Big(\mu+i-1+\frac{\ga+1}{4}(i+1)(\pa_y\mathring z+ \pa_y\tilde z)+\frac{3-\ga}{4}i\pa_y\tilde w\Big)\pa_y^{i} \tilde z \label{3.9r} \\
   & +\frac{3-\ga}{4} (\pa_y \mathring z+\pa_y \tilde z) \pa_y^i \tilde w 
    + N_z^i
    =\frac{\ga+1}{4} \pa_y^i(\bar z (1-\chi)\pa_y\mathring z) , \notag \\
    \pa_\tau \pa_y^i \tilde w& + \Big(y+\frac{3-\ga}{4}(\mathring z+\tilde z)+\frac{\ga+1}{4}\tilde w \Big) \partial_y^{i+1} \tilde w+\frac{3-\ga}{4}  \pa_y \tilde w \pa_y^i \tilde z\notag \\
   & +\Big(\mu+i-1+\frac{3-\ga}{4}i(\pa_y\mathring z+\pa_y\tilde z)+\frac{\ga+1}{4}(i+1)\pa_y\tilde w  \Big) \partial_y^{i} \tilde w 
    +N_w^i =0, \notag 
\end{align}
where 
\beqa\label{NzNw}
    N_z^i&=\frac{\ga+1}{4}\sum_{j=1}^{i-2}c_{ij} \pa_y^{i-j}\tilde z \pa_y^{j+1}\tilde z+\frac{\ga+1}{4}\sum_{j=0}^{i-2}c_{ij} \pa_y^{i-j}\mathring z \pa_y^{j+1}\tilde z + \frac{3-\ga}{4}\sum_{j=1}^{i-2}c_{ij} \pa_y^{i-j}\tilde w \pa_y^{j+1}\tilde z\\
    &+\frac{\ga+1}{4}\sum_{j=0}^{i-1}c_{ij}\pa_y^{i+1-j}\mathring z \pa_y^{j}\tilde z+\frac{3-\ga}{4}\sum_{j=0}^{i-1}c_{ij}\pa_y^{i+1-j}\mathring z \pa_y^j \tilde w \\
    N_w^i&=\frac{3-\ga}{4}\sum_{j=0}^{i-2}c_{ij}\pa_y^{i-j}\mathring z \pa_y^{j+1} \tilde w + \frac{3-\ga}{4}\sum_{j=1}^{i-2}c_{ij} \pa_y^{i-j}\tilde z \pa_y^{j+1}\tilde w + \frac{\ga+1}{4}\sum_{j=1}^{i-2}c_{ij} \pa_y^{i-j}\tilde w \pa_y^{j+1}\tilde w
\eeqa
with $c_{ij}$ are the binomial constants.

By using $z_0=w_0=z_1=w_1=0$ in \eqref{3.9r}, we obtain the following equations for  $(z_i,w_i)$, $i\ge 2$
\beqa\label{ode of ziwi with z1=0}
    \pa_\tau z_i + (i-1-i\mu)z_i-\frac{(3-\ga)\mu}{\ga+1} w_i  
    +N_z^i|_{y=0}&=0,\\
    \pa_\tau w_i + \Big(\mu+i-1-\frac{(3-\ga)i\mu}{\ga+1} \Big) w_i+ N_w^i|_{y=0} &=0, 
\eeqa
where $N_z^i|_{y=0}$ and $N_w^i|_{y=0}$ consist of lower-order terms with respect to $(z_i,w_i)$.  
It is convenient to introduce the anti-damping and damping  coefficients of the system \eqref{ode of ziwi with z1=0} by 
 \beqa\label{g_i,k_i}
 k_i &= k_i(\mu) = i-1-i\mu, \\
    g_i &= g_i(\mu) =  \mu+i-1-\frac{(3-\ga)i\mu}{\ga+1}. 
    \eeqa

The main result of this section is the following: 
\begin{lemma}\label{lemma 3.7}
 Let $\ga\in(1,3)$ and $\mu\in[\frac12,1)$. For each $i\in\{2,\dots,\lceil \beta \rceil \}$, the solutions $(z_i,w_i)$ to \eqref{ode of ziwi with z1=0} have the following properties:  
 \begin{enumerate}
     \item When $\mu=\frac12$, $w_2(\tau) = e^{-g_2 (\tau-\tau_0)}w_2(\tau_0)$ for  $g_2>0$ and 
      one of the following holds for $z_2$:
     \begin{enumerate}
         \item if $z_2(\tau_0) = -\frac{3-\ga}{5\ga-3} w_2(\tau_0)$, $z_2$ decays exponentially to 0 such that 
         \be
            z_2(\tau) = e^{- g_2 (\tau-\tau_0)} z_2(\tau_0);
         \ee
         \item otherwise,
         \be
            z_2(\tau) \to z_2(\tau_0)+\frac{3-\ga}{5\ga-3} w_2(\tau_0) \neq 0 \text{ as }\tau\to \infty.
         \ee
     \end{enumerate}
         \item When $\mu=(\frac12,1)$. 
         \begin{enumerate}
             \item If $z_i(\tau_0) \neq q_i(w_2(\tau_0),\dots, w_i(\tau_0))$ for some $i\in \{2,\dots,\lfloor\beta\rfloor\}$ where polynomial $q_i$ is defined in \eqref{qm}, then
             \be\label{case 2.1}
               z_i(\tau) \to \infty \text{ as }\tau\to \infty.
               \ee
             \item If $z_i(\tau_0)  = q_i(w_2(\tau_0),\dots, w_i(\tau_0))$ for all $i\in \{2,\dots,\lfloor\beta\rfloor\}$, then for each $i\in\{2,\dots,\lfloor\beta\rfloor\}$
             \begin{align}
                |z_i(\tau)|, \ |w_i(\tau)| &\leq C e^{-g_i(\tau-\tau_0)}. \label{j<=i}
      \end{align}
     Moreover, for $\mu\notin \ssmu$ and $i =\lceil \beta \rceil$,  
      \begin{align}
                |w_{\lceil \beta \rceil}(\tau)| &\leq C e^{-g_{\lceil \beta \rceil}(\tau-\tau_0)},\label{wi+1}\\
                |z_{\lceil \beta \rceil}(\tau)| &\leq e^{-k_{\lceil \beta \rceil}(\tau-\tau_0)}\left |z_{\lceil \beta \rceil}(\tau_0)-q_{\lceil \beta \rceil} \right | + Ce^{-g_{\lceil \beta \rceil}(\tau-\tau_0)}.\label{zi+1}
                \end{align}
         \end{enumerate}
 \end{enumerate}
    \end{lemma}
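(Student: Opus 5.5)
The approach is to exploit the triangular structure of \eqref{ode of ziwi with z1=0}. Evaluating the nonlinear terms \eqref{NzNw} at $y=0$ with $z_0=w_0=z_1=w_1=0$, the terms $N_z^i|_{y=0}$, $N_w^i|_{y=0}$ involve only $z_j,w_j$ with $2\le j\le i-1$. The coefficients $\pa_y^{k}\mathring z(0)=\pa_y^k\bar z(0)$ are known constants for $k<\beta$, and by \eqref{z y<<1} they vanish for $2\le k<\beta$ when $\beta\notin\mathbb N$. So the system is a lower-triangular cascade, and I will solve it inductively in $i$ with Duhamel's formula. The $w$-equations decouple from the $z$'s at the linear level. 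Note that $N_w^i$ contains $\pa_y^{i-j}\mathring z\,\pa_y^{j+1}\tilde w$ and quadratic $\tilde z\tilde w,\tilde w\tilde w$ terms, and every such term carries at least one $w$ factor. Hence if $w_2(\tau_0)=\dots=0$ then all $w_i$ stay $0$; in general $w_i$ solves $\pa_\tau w_i+g_iw_i=F_i$ with $F_i$ a polynomial in lower-order $(z_j,w_j)$.

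\emph{Signs.} Since $\ga\in(1,3)$, we have $g_i=\mu+i-1-\frac{(3-\ga)i\mu}{\ga+1}>\mu+i-1-i\mu\ge\mu>0$, and $g_i$ increases in $i$. Also $k_i=i-1-i\mu=i(1-\mu)-1<0$ exactly when $i<\beta$, $k_i=0$ when $i=\beta$, and $k_i>0$ for $i>\beta$. Moreover $g_i-k_i=\mu\big(1-\frac{(3-\ga)i}{\ga+1}\big)+i\mu$, which is positive, so damping in $w$ always beats $k_i$.

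\emph{Case $\mu=\tfrac12$, $i=2$.} Here $\beta=2$, $k_2=0$, and $N^2|_{y=0}$ is zero since the sums are empty or involve $z_1,w_1,\pa_y^2\bar z\cdot z_0$. Thus $w_2=e^{-g_2(\tau-\tau_0)}w_2(\tau_0)$ and $\pa_\tau z_2=\frac{3-\ga}{2(\ga+1)}w_2$. Integrating gives $z_2(\tau)=z_2(\tau_0)+\frac{3-\ga}{2(\ga+1)g_2}(1-e^{-g_2(\tau-\tau_0)})w_2(\tau_0)$. With $g_2=\frac{5\ga-3}{2(\ga+1)}$ the constant is $\frac{3-\ga}{5\ga-3}$, which gives (a) and (b).

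\emph{Case $\mu\in(\frac12,1)$.} I will induct on $i$. For $2\le i\le\lfloor\beta\rfloor$ with $k_i<0$, write
\[
z_i(\tau)=e^{-k_i(\tau-\tau_0)}\Big(z_i(\tau_0)-\int_{\tau_0}^{\tau}e^{k_i(s-\tau_0)}G_i(s)\,ds\Big),
\]
where $G_i=-\frac{(3-\ga)\mu}{\ga+1}w_i+N_z^i|_{y=0}$. Under the induction hypothesis $|z_j|,|w_j|\lesssim e^{-g_j(\tau-\tau_0)}$ for $j<i$, and after solving for $w_i$, the forcing decays at least like $e^{-g_2(\tau-\tau_0)}$. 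Quadratic terms decay faster, and linear terms such as $\pa_y^{i+1-j}\bar z(0)z_j$ inherit the rates. Since $-k_i+g>0$, the integral converges as $\tau\to\infty$. The limit $\int_{\tau_0}^\infty e^{k_i(s-\tau_0)}G_i\,ds$ is a polynomial in the initial data, which defines $q_i$ of \eqref{qm}; it depends on the $z_j(\tau_0)$ too, but these are already fixed as $q_j$ of $w$'s. If $z_i(\tau_0)\ne q_i$, then $z_i$ blows up exponentially, which is \eqref{case 2.1}, understood as $|z_i|\to\infty$. If $z_i(\tau_0)=q_i$, then $z_i=e^{-k_i(\tau-\tau_0)}\int_\tau^\infty e^{k_i(s-\tau_0)}G_i\,ds$, and this decays at the rate of the forcing.

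\emph{The main obstacle} is obtaining exactly the rate $e^{-g_i(\tau-\tau_0)}$ rather than the slower $e^{-g_2(\tau-\tau_0)}$ coming from linear couplings like $\pa_y^{i}\bar z(0)\,z_2$. For $\beta\notin\mathbb N$ these couplings vanish, since $\pa_y^k\bar z(0)=0$ for $2\le k<\beta$. The remaining lower-order contributions are then products $\prod z_{j_l}$ with $\sum(j_l-1)=i-1$, and one must check that the sum of rates dominates $g_i$. This follows from superadditivity of $g_j$ in the form $g_a+g_b\ge g_{a+b-1}$, which I will verify using $g_j=\mu+(j-1)-\frac{(3-\ga)j\mu}{\ga+1}$. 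When $\beta\in\mathbb N$, $\pa_y^k\bar z(0)=0$ still holds for $2\le k\le\beta-1$, which suffices for $i\le\lfloor\beta\rfloor$. If a forcing rate coincides with $k_i$ or $g_i$, a polynomial factor appears, which I will absorb by adjusting $C$ or the rate slightly.

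\emph{The top index $i=\lceil\beta\rceil$, $\mu\notin\ssmu$.} Here $k_i>0$ and no condition is needed. The $w$ equation gives \eqref{wi+1} as before. Duhamel for $z_i$ gives $e^{-k_i(\tau-\tau_0)}|z_i(\tau_0)-q_i|$ plus a term controlled by $e^{-g_i(\tau-\tau_0)}$, where $q_i$ is the analogous particular value; this is \eqref{zi+1}. At this index the coefficients $\pa_y^{i+1-j}\mathring z(0)$ may be singular, and since $\bar z$ is only $C^{\lfloor\beta\rfloor,\beta-\lfloor\beta\rfloor}$ I expect the needed ones to be absent there, which I will check.
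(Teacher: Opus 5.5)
Your proposal is correct and is essentially the paper's proof. Both use Duhamel's formula for the triangular system and induct in $i$. The profile couplings are removed by $z_0=z_1=w_0=w_1=0$ together with $\partial_y^k\bar z(0)=0$ for $2\le k<\beta$. The decay rates close through the gap $g_a+g_b-g_{a+b-1}=\frac{2(\gamma-1)\mu}{\gamma+1}>0$, which is the paper's identity $\sum_m\ell_m g_m-g_i=\frac{2(\gamma-1)\mu}{\gamma+1}\bigl(\sum_m\ell_m-1\bigr)$.

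The one difference is in how $q_i$ is obtained. The paper's Claim tracks exact exponential sums to exhibit $q_i$ as a polynomial. You propagate bounds and define $q_i$ as $\int_{\tau_0}^\infty e^{k_i(s-\tau_0)}G_i(s)\,ds$. Your resonance hedge is unnecessary, since $g_i>k_i$ and the quadratic forcing of $w_i$ decays strictly faster than $e^{-g_i(\tau-\tau_0)}$.

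Your restriction to $k_i<0$ in the divergence claim is the correct one. For $\beta\in\mathbb N$ one has $k_\beta=0$, so $z_\beta\to z_\beta(\tau_0)-q_\beta\neq0$ rather than diverging. Hence item 2(a) of the statement, and the paper's assertion that $k_i<0$ for all $i\le\lfloor\beta\rfloor$, fail at $i=\beta$.
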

    \begin{proof}
    The proof follows from analyzing the solutions of \eqref{ode of ziwi with z1=0}. 
    By introducing the integral factor \eqref{g_i,k_i},
    we first obtain  
  \beqa\label{eq 3.25}
        &w_i(\tau) =e^{-g_i(\tau-\tau_0)} w_i(\tau_0) -e^{-g_i\tau} \int_{\tau_0}^\tau e^{g_i\tau'} N_w^i|_{y=0}(\tau') \ d\tau',\\
         &z_i(\tau) = e^{-k_i(\tau-\tau_0)} z_i(\tau_0) -  e^{-k_i\tau} \int_{\tau_0}^\tau e^{k_i(\tau')}\Big( N_z^i|_{y=0}(\tau')-\frac{(3-\ga)\mu}{\ga+1} w_i(\tau') \Big) \ d\tau'\\
        &= e^{-k_i(\tau-\tau_0)} \Big(z_i(\tau_0) +\frac{3-\ga}{2(\ga-1)i+\ga+1} w_i(\tau_0) \Big) -\frac{3-\ga}{2(\ga-1)i+\ga+1} w_i(\tau_0)e^{-g_i(\tau-\tau_0)}\\
        & -e^{-k_i\tau} \int_{\tau_0}^\tau\Bigg( e^{k_i\tau'}\Big( N_z^i|_{y=0}(\tau')\Big) + \frac{(3-\ga)\mu}{\ga+1} e^{(k_i-g_i)\tau'}\int_{\tau_0}^{\tau'} e^{g_i\tau''} \Big(N_w^i|_{y=0}(\tau'')\Big) \ d\tau'' \Bigg)d \tau' .
  \eeqa
When $i=2$, then by using $z_0=w_0=z_1=w_1=0$, we have $N_z^i|_{y=0} =N_w^i|_{y=0}=0$,
  \beqa\label{w2z2 sol}
        w_2(\tau) &=  e^{-g_2(\tau-\tau_0)} w_2(\tau_0),\\
        z_2(\tau) &=e^{-k_2(\tau-\tau_0)} \Big(z_2(\tau_0) +\frac{3-\ga}{5\ga-3} w_2(\tau_0) \Big) -\frac{3-\ga}{5\ga-3} w_2(\tau_0)e^{-g_2(\tau-\tau_0)}.
    \eeqa

    \underline{Case 1:} It is a direct consequence of \eqref{w2z2 sol} and $k_2=0$ for $\mu=\frac12$.

    \underline{Case 2:} For more general cases, we first notice that 
    \beqa\label{gi>0}
        g_i \geq \mu+i-1-i\mu = (i-1)(1-\mu)>0
    \eeqa
    for any $\ga\in(1,3)$, $\mu\in[\frac12,1)$ and $i\geq 2$. And,
    \beqa
        k_i = i-1-i\mu = (1-\mu) (i -\beta)
    \eeqa
    where $\beta$ is defined in \eqref{beta}. Therefore, for $\mu\in(\frac12,1)$, we have 
    \[\beta> 2 \text{ and } k_2<0.\] 
    Then, from \eqref{w2z2 sol}, $z_2$ has an exponentially growing factor $e^{-k_2(\tau-\tau_0)}$ if $z_2(\tau_0)\neq q_2(w_2(\tau_0))$, which proves \eqref{case 2.1} and \eqref{j<=i} in the case of $i=2$. For larger $i$, we notice that $k_i<0$ for $i\in \{2, \dots,\lfloor \beta \rfloor\}$. There is always a growing factor $e^{-k_i(\tau-\tau_0)}$ if its coefficient is not zero. This motivates us to define $q_i$ and we  have the following claim:
    \begin{claim*}
        For each $\mu\in(\frac12,1)$ and $i\in \{2, \dots,\lceil \beta \rceil\}$, there exist polynomials
        \be
            q_j(\tau_0):=q_j(w_2(\tau_0),\dots, w_j(\tau_0))
        \ee
        for each $j\in\{2,\dots,i\}$ 
        such that 
        if the initial data of $z_j$ satisfy 
        \be\label{zj(tau0) constraint}
            z_j(\tau_0) = q_j(w_2(\tau_0),\dots, w_j(\tau_0))  \  \text{ for all }j\in\{2,\dots,i-1\}, 
        \ee
        then 
          \beqa\label{wjzj sol with condition}
               w_i(\tau) &= p_j(\tau_0) e^{-g_i(\tau-\tau_0)}
        +
        \sum_{\substack{\ell_m\geq 0,\\ \sum_{ m=2}^ { i-1} \ell_m (m-1) = i-1 }} d_{ \ell_2,\dots, \ell_{i-1}}(\tau_0)  
        e^{-\sum_{ m=2}^ { i-1}\ell_mg_m(\tau-\tau_0)}\\
       z_i(\tau)&=(z_i(\tau_0)-q_i(\tau_0))e^{-k_i(\tau-\tau_0)}+\tilde p_i(\tau_0)e^{-g_i(\tau-\tau_0)}\\&\quad + \sum_{\substack{\ell_m\geq 0,\\ \sum_{ m=2}^ { i-1} \ell_m (m-1) = i-1 }} \tilde d_{ \ell_2,\dots, \ell_{i-1}}(\tau_0)  
       e^{-\sum_{ m=2}^ { i-1}\ell_mg_m(\tau-\tau_0)}
    \eeqa
    where $p_j(\tau_0)$ and $\tilde p_j(\tau_0)$ are polynomials in $w_2(\tau_0),\dots, w_j(\tau_0)$, and $d_{ \ell_2,\dots, \ell_{j-1}}(\tau_0)$ and $\tilde d_{ \ell_2,\dots, \ell_{j-1}}(\tau_0)$ are polynomials in $w_2(\tau_0),\dots, w_{j-1}(\tau_0)$.
    \end{claim*} 
        \noindent\emph{Proof of Claim:}
            Fixed a $\mu\in(\frac12,1)$, then $\lceil\beta\rceil\geq 3$.  \eqref{w2z2 sol} proves the case of $i=2$ with 
            \be
                 q_2(w_2(\tau_0)) :=- \frac{3-\ga}{5\ga-3} w_2(\tau_0).
            \ee
        For $i\geq 3$, we prove the statement by induction. Assume that the claim holds for all $j\in\{2,\dots, i-1\}$. Our goal is then to demonstrate that the statement also holds for $j=i$. 
        
        Using $z_0=w_0=z_1=w_1=0$ and $\pa_y^j \bar z(0)=0$ for $j\in\{2,\dots, i-1\}$ from Proposition \ref{prop bar z} (3), the equations \eqref{NzNw} reduce to 
        \begin{align*}
             N_z^i|_{y=0}&=\frac{\ga+1}{4}\sum_{m=1}^{i-2}c_{im}  z_{i-m}  z_{m+1} + \frac{3-\ga}{4}\sum_{m=1}^{i-2}c_{im} w_{i-m}  z_{m+1},\\
    N_w^i|_{y=0}&= \frac{3-\ga}{4}\sum_{m=1}^{i-2}c_{im}  z_{i-m}  w_{m+1} + \frac{\ga+1}{4}\sum_{m=1}^{i-2}c_{im}  w_{i-m} w_{m+1}.
        \end{align*}
        We focus on the term $z_{i-m}z_{m+1}$ as 
        the remaining terms can be treated analogously.
        Without loss of generality, we may assume that $m+1\leq i-m$. 
        By applying the induction assumption 
        \eqref{wjzj sol with condition} to $i-m, m+1\in\{2,\dots, i-1\}$ under the constraint assumption \eqref{zj(tau0) constraint}, we compute that
        \begin{align*}
            z_{i-m}z_{m+1} = &\Big(\tilde p_{i-m}(\tau_0)e^{-g_{i-m}(\tau-\tau_0)} + \sum_{\substack{\ell_k\geq 0,\\ \sum_{ k=2}^ { i-m-1} \ell_k (k-1) = i-m-1 }} \tilde d_{ \ell_2,\dots, \ell_{i-m-1}}(\tau_0)  
       e^{-\sum_{ k=2}^ { i-m-1}\ell_k g_k(\tau-\tau_0)}\Big)\\
       &\times \Big(\tilde p_{m+1}(\tau_0)e^{-g_{m+1}(\tau-\tau_0)} + \sum_{\substack{\ell_k\geq 0,\\ \sum_{ k=2}^ { m} \ell_k (k-1) = m }} \tilde d_{ \ell_2,\dots, \ell_{m}}(\tau_0)  
       e^{-\sum_{ k=2}^ { m}\ell_k g_k(\tau-\tau_0)}\Big)\\
       =&\Big(  \sum_{\substack{\ell_k\geq 0,\\ \sum_{ k=2}^ { i-m} \ell_k (k-1) = i-m-1 }} \tilde d_{ \ell_2,\dots, \ell_{i-m}}(\tau_0)  
       e^{-\sum_{ k=2}^ { i-m}\ell_k g_k(\tau-\tau_0)}\Big)\\
       &\times \Big(\sum_{\substack{\ell_k’\geq 0,\\ \sum_{ k=2}^ { m+1} \ell_k’(k-1) = m }} \tilde d_{ \ell_2’,\dots, \ell_{m+1}’}(\tau_0)  
       e^{-\sum_{ k=2}^ { m+1}\ell_k’ g_k(\tau-\tau_0)}\Big)\\
       =&\sum_{\substack{\tilde \ell_k\geq 0,\\ \sum_{ k=2}^ { i-m} \tilde \ell_k (k-1) = i-1 }} \tilde d_{ \tilde \ell_2,\dots, \tilde\ell_{i-m}}(\tau_0)  e^{-\sum_{ k=2}^ { i-m}\tilde  \ell_k g_k(\tau-\tau_0)}.
       \end{align*}
        
        After the summation for $m$ from 1 to $i-2$, we deduce that
         \begin{align*}
              N_w^i|_{y=0}&=  \sum_{\substack{\ell_k\geq 0,\\ \sum_{ k=2}^ { i-1} \ell_k (k-1) = i-1 }} d_{ \ell_2,\dots, \ell_{i-1}} (\tau_0) e^{-\sum_{ k=2}^ { i-1}\ell_kg_k(\tau-\tau_0)},\\
              N_z^i|_{y=0}&= \sum_{\substack{\ell_k\geq 0,\\ \sum_{ k=2}^ { i-1} \ell_k (k-1) = i-1 }} \tilde d_{ \ell_2,\dots, \ell_{i-1}}(\tau_0) e^{-\sum_{ k=2}^ { i-1}\ell_kg_k(\tau-\tau_0)}.
        \end{align*}
        where both $d_{ \ell_2,\dots, \ell_{i-1}} (\tau_0)$ and $\tilde d_{ \ell_2,\dots, \ell_{i-1}} (\tau_0)$ are polynomials of $w_2(\tau_0),\dots w_{i-1}(\tau_0)$.
       Now, using 
          \begin{align*}
        e^{-r_1\tau} \int_{\tau_0}^{\tau} e^{r_1\tau'} e^{-r_2(\tau'-\tau_0)} \ d\tau' &= \frac{1}{r_1-r_2} \big(e^{-r_2(\tau-\tau_0)}-e^{-r_1(\tau-\tau_0)}\big),\quad r_1-r_2\neq 0
        \end{align*} and  
        \begin{align*}
         g_i-\sum_{2\leq m \leq i-1}\ell_mg_m=&\mu  \frac{2(\ga-1)}{\ga+1}(1-\sum_{2\leq m \leq i-1}\ell_m) \neq 0,\\
         k_i-\sum_{2\leq m \leq i-1}\ell_mg_m
        =& -\mu-\frac{2(\ga-1)}{\ga+1}\mu\sum_{2\leq m \leq i-1}m\ell_m\neq 0,
        \end{align*}
        we evaluate the integrals in \eqref{eq 3.25} and obtain \eqref{wjzj sol with condition}
       where
      \be\label{qm}
           q_i(\tau_0):= -\frac{3-\ga}{2(\ga-1)i+\ga+1} w_i(\tau_0) - \sum_{\sum_{ m=2}^ { i-1} \ell_m (m-1) = i-1 } \tilde d'_{ \ell_2,\dots, \ell_{j-1}} (\tau_0),
        \ee
        and $p_i(\tau_0)$ and $\tilde p_i(\tau_0)$ are polynomials of $w_2(\tau_0),\dots w_{i}(\tau_0)$. $ d'_{ \ell_2,\dots, \ell_{i-1}}(\tau_0)$,  $\tilde d'_{ \ell_2,\dots, \ell_{i-1}}(\tau_0)$, and $\tilde d''_{ \ell_2,\dots, \ell_{i-1}}(\tau_0)$ are polynomials of $w_2(\tau_0),\dots w_{i-1}(\tau_0)$.
        This completes the induction and the proof of the claim.

    \

    By using the claim and \eqref{gi>0}, and
    \begin{align*}
         g_j-\sum_{2\leq m \leq j-1}\ell_mg_m=&\mu  \frac{2(\ga-1)}{\ga+1}(1-\sum_{2\leq m \leq j-1}\ell_m)<0,
    \end{align*}
    for any $j\in\{2,\dots,\lceil \beta \rceil\}$, we deduce  \eqref{case 2.1}–\eqref{zi+1}. 
\end{proof}

The polynomials obtained in Lemma \ref{lemma 3.7} provide the precise constraints for the initial data of $z_i$'s, which suppress the potential growth driven by the anti-damping unless $i>\beta$ (see $k_i$ in  \eqref{g_i,k_i}), while $w_i$'s are damped and decay exponentially fast to 0 for all $i\ge 2$ (see $g_i$ in  \eqref{g_i,k_i}).

We now define the finite-dimensional constraint set $I^i$ for $i\ge 2$  consisting of $2(i-1)$-dimensional vectors $(z_2, \dots, z_i; w_2,\dots, w_i)$ satisfying the $(i-1)$ constraints $z_j= q_j (w_2, \dots,w_j)$ for $2\le j\le i$: 
\beqa\label{boundary condition}
  I^i = \left\{ (z_2, \dots, z_i; w_2,\dots, w_i): z_j= q_j (w_2, \dots,w_j) \text{ for  } 2\le j \le i  \right\}
\eeqa
where $q_j$ is a polynomial given in \eqref{qm}.
We then have the following result:
\begin{proposition}\label{trapping}
    Let $\ga\in(1,3)$ and $\mu\in[\frac12,1)$. Let the initial data $(z_j(\tau_0), w_j(\tau_0))$ for $2\le j\le \lceil \beta \rceil$ be given. If $(z_2(\tau_0), \dots, z_{\lfloor \beta \rfloor}(\tau_0); w_2(\tau_0),\dots, w_{\lfloor \beta \rfloor} (\tau_0)) \in I^{\lfloor \beta \rfloor} $,    
   then the following holds
    \be\label{3.38}
        |(z_j(\tau),w_j(\tau)) |\leq C\Big( e^{-g_2 (\tau-\tau_0)}+(\lceil \beta \rceil-\lfloor\beta\rfloor)e^{-k_{\lceil \beta \rceil} (\tau-\tau_0)} \Big)\ \text{ for any } j\in\{2, \dots, \lceil \beta \rceil\}\text{ and }\tau\geq\tau_0.
    \ee
    Otherwise, there exists a $j\in\{2, \dots, i\}$ such that
    \be
        z_j\to \infty \ \text{ as } \ \tau\to\infty.
    \ee
\end{proposition}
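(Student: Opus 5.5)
The plan is to read Proposition \ref{trapping} off Lemma \ref{lemma 3.7} by checking that all the exponential rates appearing there are dominated by the two rates in \eqref{3.38}. First I record the ordering of the rates. From \eqref{g_i,k_i}, $g_i-g_{i-1}=1-\frac{(3-\ga)\mu}{\ga+1}>0$, so $g_i$ is increasing in $i$ and $g_2=\min_{i\ge2} g_i$. By \eqref{gi>0} every $g_i>0$, hence any product term $e^{-\sum_m \ell_m g_m(\tau-\tau_0)}$ with $\sum_m\ell_m\ge1$ in \eqref{wjzj sol with condition} is bounded by $e^{-g_2(\tau-\tau_0)}$. On the constraint set $I^{\lfloor\beta\rfloor}$ the initial data lie in a bounded ball (as in \eqref{initialziwi}), so all the polynomial coefficients $p_j,\tilde p_j,d,\tilde d$ are bounded by a constant $C$.

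Next I split into the cases $\mu=\frac12$ and $\mu\in(\frac12,1)$. For $\mu=\frac12$ we have $\beta=2$ and $\lceil\beta\rceil=\lfloor\beta\rfloor=2$, and the constraint $z_2(\tau_0)=q_2(w_2(\tau_0))=-\frac{3-\ga}{5\ga-3}w_2(\tau_0)$ puts us in Case 1(a) of Lemma \ref{lemma 3.7}. There both $z_2$ and $w_2$ are bounded by $Ce^{-g_2(\tau-\tau_0)}$, and the second term in \eqref{3.38} vanishes.

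For $\mu\in(\frac12,1)$ I apply Case 2(b). For $2\le j\le\lfloor\beta\rfloor$, \eqref{j<=i} gives $|z_j|,|w_j|\le Ce^{-g_j(\tau-\tau_0)}\le Ce^{-g_2(\tau-\tau_0)}$. If $\mu\in\ssmu$, then $\lceil\beta\rceil=\lfloor\beta\rfloor$ and we are done. If $\mu\notin\ssmu$, then \eqref{wi+1}--\eqref{zi+1} handle $j=\lceil\beta\rceil$: $w_{\lceil\beta\rceil}$ is controlled by $e^{-g_{\lceil\beta\rceil}}\le e^{-g_2}$, and $z_{\lceil\beta\rceil}$ gains the extra term $|z_{\lceil\beta\rceil}(\tau_0)-q_{\lceil\beta\rceil}|e^{-k_{\lceil\beta\rceil}(\tau-\tau_0)}$, where $k_{\lceil\beta\rceil}=(1-\mu)(\lceil\beta\rceil-\beta)>0$. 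This is exactly the term carrying the factor $\lceil\beta\rceil-\lfloor\beta\rfloor=1$ in \eqref{3.38}. The unconstrained initial value $z_{\lceil\beta\rceil}(\tau_0)$ is bounded by the ball assumption and is absorbed into $C$.

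For the converse, suppose the data are not in $I^{\lfloor\beta\rfloor}$. Then $\mu\neq\frac12$, since for $\mu=\frac12$ failure of the constraint gives the nonzero limit of Case 1(b) rather than blowup. I take the smallest $j$ with $z_j(\tau_0)\ne q_j$; the constraints hold for all lower indices, so the claim inside the proof of Lemma \ref{lemma 3.7} applies at level $j$. In the representation \eqref{wjzj sol with condition}, the term $(z_j(\tau_0)-q_j)e^{-k_j(\tau-\tau_0)}$ has $k_j<0$ because $j\le\lfloor\beta\rfloor<\beta$ (and $j<\beta$ also when $\beta$ is an integer, since here $j\le\lfloor\beta\rfloor$ and $k_\beta=0$ only if $j=\beta$). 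All remaining terms are bounded. Hence $|z_j|\to\infty$, which is \eqref{case 2.1}.

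The main obstacle is bookkeeping rather than analysis. First, one must make sure the constants $C$ are uniform, which requires using boundedness of the data as in \eqref{initialziwi}. Second, one must treat the borderline index $j=\beta$ when $\mu\in\ssmu$, where $k_\beta=0$. The proposition only uses indices up to $\lceil\beta\rceil=\beta$ with constraints imposed through $\lfloor\beta\rfloor=\beta$, so $k_\beta=0$ never produces a growing mode there. I would check that no resonance $k_j=\sum\ell_m g_m$ occurs, using the nonvanishing identities already verified in the proof of the claim.
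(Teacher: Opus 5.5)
Your forward direction is correct and is the paper's argument. The paper derives the proposition directly from Lemma~\ref{lemma 3.7} together with $g_k-g_2=\bigl(1-\frac{(3-\ga)\mu}{\ga+1}\bigr)(k-2)\ge0$, which is your monotonicity of $g_i$. You do not need \eqref{initialziwi} there: it is not a hypothesis of the proposition, and $C$ may depend on the given data.

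The converse, however, has a step that fails. Suppose $\beta\in\mathbb N$ (i.e.\ $\mu\in\ssmu$). Then the smallest violated index can be $j=\lfloor\beta\rfloor=\beta$, and in that case $k_j=(1-\mu)(j-\beta)=0$, not $<0$.

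\begin{itemize}
\item Your parenthetical ``$j<\beta$ also when $\beta$ is an integer'' is circular.
\item Your own closing remark, that $k_\beta=0$ ``never produces a growing mode,'' contradicts the blowup you claim.
\end{itemize}

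In this case apply the claim in the proof of Lemma~\ref{lemma 3.7}; its non-resonance conditions $k_\beta-g_\beta\neq0$ and $k_\beta-\sum_m\ell_mg_m\neq0$ still hold. It gives $z_\beta(\tau)=\bigl(z_\beta(\tau_0)-q_\beta\bigr)+O\bigl(e^{-g_2(\tau-\tau_0)}\bigr)\to z_\beta(\tau_0)-q_\beta\neq0$. That is a nonzero limit, not blowup.

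Your exclusion ``then $\mu\neq\frac12$'' is likewise not a deduction. The case $\mu=\frac12$ is just the $\beta=2$ instance of this same borderline, which Case~1(b) of Lemma~\ref{lemma 3.7} records correctly.

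So the second clause of the proposition, as stated, is false for every $\mu\in\ssmu$ whenever the only violated constraint is at $j=\beta$. No argument can close it. The paper's one-line proof inherits the problem from part 2(a) of Lemma~\ref{lemma 3.7}, whose proof asserts $k_i<0$ for all $i\le\lfloor\beta\rfloor$.

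What is true, and what your argument does establish, is the following.
\begin{itemize}
\item If the smallest violated index satisfies $j<\beta$, then $|z_j|\to\infty$, with the sign of $z_j(\tau_0)-q_j$.
\item If $j=\beta\in\mathbb N$, then $z_\beta$ tends to a nonzero constant.
\end{itemize}
In every case $(z_j,w_j)\not\to0$, and that is the version of the converse you should state and prove.
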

\begin{proof}
    It is a direct consequence of Lemma \ref{lemma 3.7} and the fact that for $k\geq 2$
    \[
        g_k -g_2 = (1-\frac{(3-\ga)\mu}{\ga+1})(k-2) \geq 0
    \]
    for any $\ga\in(1,3)$ and $\mu\in[\frac12,1)$.
\end{proof}
\begin{remark}\label{remark 3.7}
    This proposition implies that, in order to achieve asymptotic stability, the initial data must satisfy the finite-dimensional constraints, 
    imposed by the higher order boundary constraints as stated in \eqref{boundary condition}. The number of the constraints is $\lfloor \beta+1 \rfloor$: $\lfloor \beta \rfloor - 1$ are from the higher order constraints in $I^{\lfloor \beta\rfloor}$ and the remaining two are from $z_1=w_1=0$ related to the time translation symmetry. 
\end{remark}
\begin{remark}
    For $\mu\notin \ssmu$, there is no need to impose the constraints at the order $i=\lceil \beta \rceil$ since $k_{\lceil \beta \rceil}>0$. Consequently, competition in the decay rates arises between $g_2$ and $k_{\lceil \beta \rceil}$ whose relative magnitudes are undetermined; in particular, either quantity may dominate depending on $\ga$ and $\mu$. For notational convenience, we denote 
    \be\label{g0}
        \epzero = \min\{g_2, (\lfloor\beta\rfloor-\lceil\beta\rceil +1)g_2  +  (\lceil\beta\rceil -\lfloor\beta\rfloor)k_{\lceil \beta \rceil}\} 
    \ee so that $\epzero= g_2$ if $\mu \in \ssmu$ and $\epzero=k_{\lceil \beta \rceil} $ if $\mu\notin \ssmu$. 
  The inequality \eqref{3.38} can be written as 
  \be
        |(z_j,w_j)| \leq C e^{-\epzero\tau}.
  \ee
\end{remark}

\section{$(Z,W)$ equations and linear analysis} \label{section 4}

In this section, we derive the equations satisfied by $(Z,W)$ and analyze the associated linear system. Recall \eqref{sol decompose}. For notational convenience, we denote the first sums in the decomposition \eqref{sol decompose} by 
\be\label{def mz mw}
	\mz = \sum_{i=2}^{\ndmu-1} \varphi(y) \frac{z_i(\tau)}{i!}y^i \ \ \text{ and } \ \  \mw =  \sum_{i=2}^{\ndmu-1}  \varphi(y) \frac{w_i(\tau)}{i!}y^i.
\ee
Denote the linear operators appearing in \eqref{PDE of tilde z w} by 
\beqa\label{4.5}
	\lz &=  \Big(y+\frac{\ga+1}{4}\mathring z \Big) \partial_y   + \Big(\frac{\ga+1}{4}\partial_y \mathring z +\mu-1\Big),\\
	\lw &= \Big(y+\frac{3-\ga}{4}\mathring z \Big) \partial_y  + (\mu-1).
\eeqa
With these notations and from \eqref{PDE of tilde z w}, we may write the equations for $(Z,W)$ as the first order system 
\beqa\label{full system Z W}
	\partial_\tau \espz + \lz \espz +\frac{3-\ga}{4}\partial_y \mathring z \espw
     + \nz 
    + \nmz &= \mathcal S_z \\
	\partial_\tau \espw+ \lw \espw + \nw 
    + \nmw &=0 
\eeqa
where
\beqa\label{nz nmz}
    \mathcal S_z &=   \frac{\ga+1}{4} \bar z (1-\chi) \pa_y \mathring z ,\\ 
	\nz & =  (\frac{\ga+1}{4}\mz+\frac{3-\ga}{4}\mw+\frac{\ga+1}{4}\espz+\frac{3-\ga}{4}\espw)\partial_y\espz +(\frac{\ga+1}{4}\espz+\frac{3-\ga}{4} \espw)\pa_y\mz,\\
	\nmz & =\pa_\tau \mz+ \lz \mz+\frac{3-\ga}{4}\partial_y \mathring z \mw +(\frac{\ga+1}{4}\mz+\frac{3-\ga}{4}\mw)\partial_y\mz,\\
	\nw &= (\frac{3-\ga}{4}\espz+\frac{\ga+1}{4}\espw+\frac{3-\ga}{4}\mz+\frac{\ga+1}{2}\mw)\partial_y \espw+(\frac{\ga+1}{4}\espw+\frac{3-\ga}{4} \espz)\pa_y\mw ,\\
	\nmw &=\pa_\tau \mw+  \lw \mw+(\frac{3-\ga}{4}\mz+\frac{\ga+1}{4}\mw)\partial_y \mw.
\eeqa

In the rest of this section, we study the following linear inhomogeneous problem 
\be\label{linear PDE of tilde z w}
	\begin{aligned}
	\partial_\tau \espz +\lz \espz +\frac{3-\ga}{4}\partial_y \mathring z \ \espw 
    &= F_1,\\
		\partial_\tau \espw +\lw \espw &= F_2.
	\end{aligned}
\ee

We first present the following singular-weighted $L^2$ energy estimates.  

\begin{lemma}[Weighted $L^2$ estimates]\label{weighted L^2 estimates}
	Let $\ga\in(1,3)$ and $\mu\in[\frac12,1)$. Consider smooth solutions $(Z,W)$ to the transport equations \eqref{linear PDE of tilde z w} satisfying $(\pa_y^j Z,\pa_y^jW)|_{(\tau,0)} =0$ for all $\tau\in [\tau_0,\tau_1]$ and $0\le j\le \ndmu-1$. 
	Then there exist constants $b_0 >0$ and $\eps_0>0$  such that for all $b\ge b_0$ the following holds
	\beqa
		\frac12\pa_\tau(\norm{\frac{\espz}{y^{\ndmu}}}_{L^2}^2 + b \norm{\frac{\espw}{y^{\ndmu}}}_{L^2}^2) \leq -\sigma^2 (\norm{\frac{\espz}{y^{\ndmu}}}_{L^2}^2 +b  \norm{\frac{\espw}{y^{\ndmu}}}_{L^2}^2) + \left| \langle \frac{F_1}{y^{\ndmu}},\frac{\espz}{y^{\ndmu}} \rangle\right|  + b \left| \langle \frac{F_2}{y^{\ndmu}},\frac{\espw}{y^{\ndmu}} \rangle\right|  
	\eeqa
	where 
	\[
		0< \sigma^2 = (1-\mu)\ndmu-\frac32+\frac{\mu}{2}- \epsilon_0.
	\] 
\end{lemma}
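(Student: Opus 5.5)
The plan is a direct weighted energy identity. I multiply the $\espz$-equation of \eqref{linear PDE of tilde z w} by $\espz\,y^{-2\ndmu}$ and the $\espw$-equation by $b\,\espw\,y^{-2\ndmu}$, integrate over $(0,\infty)$, and integrate the transport terms by parts. The vanishing conditions $\pa_y^j\espz(\tau,0)=\pa_y^j\espw(\tau,0)=0$ for $j\le \ndmu-1$ give $\espz,\espw=O(y^{\ndmu})$ near $y=0$. Hence the boundary terms $y^{1-2\ndmu}\espz^2$, $y^{1-2\ndmu}\espw^2$ vanish at $y=0$, and also at $y=\infty$ since the weight decays and the solutions are smooth and decaying. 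This leaves, for a velocity $V$ and potential $P$, the identity
\[
\int \big(V\pa_y \espz+P\espz\big)\frac{\espz}{y^{2\ndmu}}\,dy=\int\Big(\ndmu\frac{V}{y}-\frac12\pa_yV+P\Big)\frac{\espz^2}{y^{2\ndmu}}\,dy .
\]

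\textbf{The $\espz$-coefficient.} Here $a=\frac{\ga+1}{4}$, $V=y+a\mathring z$ and $P=a\pa_y\mathring z+\mu-1$. The effective coefficient is
\[
\ndmu\Big(1+a\frac{\mathring z}{y}\Big)+\frac{a}{2}\pa_y\mathring z+\mu-\frac32 .
\]
I bound the two pieces from below as follows.
\begin{itemize}
\item By \eqref{z = -2y U} and \eqref{B bc}, $1+a\mathring z/y=1-\frac{\ga+1}{2}\chi\bar U\ge 1-\mu$, using $0\le\chi\le1$ from Lemma \ref{le: supp chi}.
\item Write $\pa_y\mathring z=\chi\pa_y\bar z+\pa_y\chi\,\bar z$. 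Since $\pa_y\chi\le0$ and $\bar z\le 0$, the second term is nonnegative. Proposition \ref{prop bar z} (1) gives $a\chi\pa_y\bar z\ge -\mu$, so $\frac a2\pa_y\mathring z\ge-\frac\mu2$.
\end{itemize}
The coefficient is therefore at least $(1-\mu)\ndmu+\frac\mu2-\frac32$. The definition \eqref{def ndmu} gives $\ndmu>\beta+\frac12=\frac{3-\mu}{2(1-\mu)}$, which is exactly the statement that this quantity is strictly positive. This choice of $\ndmu$ is the key point, and it leaves room for a small $\eps_0$.

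\textbf{The $\espw$-coefficient.} Here $a'=\frac{3-\ga}{4}$, $V=y+a'\mathring z$ and $P=\mu-1$. The coefficient is
\[
\ndmu\Big(1+a'\frac{\mathring z}{y}\Big)-\frac12-\frac{a'}2\pa_y\mathring z+\mu-1 .
\]
\begin{itemize}
\item Since $a'<a$, the first factor again satisfies $1+a'\mathring z/y\ge 1-\mu$.
\item The term $-\frac{a'}2\chi\pa_y\bar z$ is nonnegative.
\item The only bad piece is $-\frac{a'}{2}\pa_y\chi\,\bar z$. It is supported in $y\ge y_0e^{(1-\mu)(\tau-\tau_0)}$ by Lemma \ref{le: supp chi} (2). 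There Lemma \ref{le: supp chi} (4) and \eqref{z y>>1} give $|\pa_y\chi\,\bar z|\lesssim y^{-1}y^{1-\mu}\lesssim y_0^{-\mu}$, which is small since $y_0$ is large.
\end{itemize}
So the $\espw$-coefficient is at least $(1-\mu)\ndmu+\mu-\frac32-Cy_0^{-\mu}$. This exceeds the $\espz$-bound by a margin of about $\frac\mu2$, and that margin is what pays for the coupling term.

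\textbf{The coupling and source terms.} The coupling $\frac{3-\ga}{4}\int\pa_y\mathring z\,\espw\,\espz\,y^{-2\ndmu}$ is controlled using $|\pa_y\mathring z|\le A$ from Propositions \ref{prop bar z} and \ref{prop: mathring z}. Young's inequality bounds it by
\[
\eps_0\norm{\espz y^{-\ndmu}}_{L^2}^2+\frac{C}{\eps_0}\norm{\espw y^{-\ndmu}}_{L^2}^2 .
\]
Choosing $b\ge b_0\sim C/(\eps_0\cdot\frac\mu2)$ lets the $\frac{C}{\eps_0}$ term be absorbed into the $\frac\mu2$ margin in the $b$-weighted $\espw$-term. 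The forcing terms are simply kept as $|\langle F_1y^{-\ndmu},\espz y^{-\ndmu}\rangle|$ and $b|\langle F_2y^{-\ndmu},\espw y^{-\ndmu}\rangle|$. Combining everything gives the claimed inequality with $\sigma^2=(1-\mu)\ndmu-\frac32+\frac\mu2-\eps_0$.

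The main obstacle I expect is the sign bookkeeping for $\pa_y\mathring z$ near the cutoff region. The $\espz$-estimate must have no loss beyond $\eps_0$, which I obtain through the favorable sign of $\pa_y\chi\,\bar z$. The $\espw$-estimate must tolerate the opposite sign, which I handle via the smallness $y_0^{-\mu}$.
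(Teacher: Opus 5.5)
Your proposal is correct and follows essentially the paper's proof. It uses the same $y^{-2\ndmu}$-weighted multiplier and integration by parts, the same lower bounds $1+\frac{\ga+1}{4}\frac{\mathring z}{y}\ge 1-\mu$ and $\pa_y\chi\,\bar z\ge 0$ for the $\espz$-coefficient, Young's inequality on the coupling term, and a large $b$.

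The only difference is where the slack for the $b$-weighted $\espw$-term comes from. The paper uses the sharper transport bound $1+\frac{3-\ga}{4}\frac{\mathring z}{y}\ge 1-\frac{3-\ga}{\ga+1}\mu$ and lets the cutoff term cost up to $\frac{\mu}{2}$. You instead use the absence of $\pa_y\mathring z$ in the $\espw$ potential, which gives a margin of $\frac{\mu}{2}$, and make the cutoff error $O(y_0^{-\mu})$. Either works.
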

\begin{proof}
    We start with the estimation of $\frac12 \pa_\tau\norm{\frac{\espz }{y^{\ndmu}}}_{L^2}^2$. Using the $Z$ equation in \eqref{linear PDE of tilde z w} and integrating by parts, we first obtain 
	\begin{align*}
		\frac12 \pa_\tau\norm{\frac{\espz }{y^{\ndmu}}}_{L^2}^2 &= \int_0^\infty -(y+\frac{\ga+1}{4} \mathring z) y^{-2\ndmu}\espz \pa_y \espz -  (\frac{\ga+1}{4}\partial_y \mathring z +\mu-1)|\frac{\espz}{y^{\ndmu}}|^2 \ dy \\
        &\quad - \int_0^\infty   \frac{3-\ga}{4}\partial_y \mathring z {y^{-2\ndmu}}{\espw \espz}  \ dy
		+\int_0^\infty  
        y^{-2\ndmu}F_1 {\espz}{}  \ dy 
		 \\ &\leq   \int_0^\infty   \Big(-(1+\frac{\ga+1}{4}\frac{\mathring z}{y})\ndmu+\frac32-\mu-\frac{\ga+1}{8} \pa_y\chi \bar z-\frac{\ga+1}{8} \chi\pa_y \bar z \Big) |\frac{\espz}{y^{\ndmu}}|^2 \ dy \\
		 &\quad + \frac12 \left| \langle  \partial_y \mathring z \frac{\espw}{y^\ndmu}, \frac{\espz}{y^\ndmu} \rangle \right| + \left| \langle \frac{F_1}{y^\ndmu},\frac{\espz}{y^\ndmu} \rangle\right|   
         \end{align*}
      Since $\pa_y\chi \bar z\ge 0$ and $1+\frac{\ga+1}{4}\frac{\mathring z}{y} \ge 1- \mu $, $|\pa_y \bar z |\le \frac{4\mu}{\ga +1} $ by Proposition \ref{prop bar z}, and by using the Young's inequality for the second crossing term, we deduce that for any small $\eps>0$,   
         \begin{align*}
         \frac12 \pa_\tau\norm{\frac{\espz }{y^\ndmu}}_{L^2}^2  
		& \leq  \Big(-(1-\mu)\ndmu+\frac32-\frac{1}{2}\mu+\epsilon\Big) \norm{\frac{\espz}{y^{\ndmu}}}_{L^2}^2 +C_\eps \norm{\frac{\espw}{y^{\ndmu}}}_{L^2}^2 + \left| \langle \frac{F_1}{y^\ndmu},\frac{\espz}{y^\ndmu} \rangle\right| .  
	\end{align*} 
Similarly, we compute $\frac12 \pa_\tau\norm{\frac{\espw}{y^{\ndmu}}}_{L^2}^2 $
	\begin{align*}
		\frac12 \pa_\tau\norm{\frac{\espw}{y^{\ndmu}}}_{L^2}^2 &= \int_0^\infty -(y+\frac{3-\ga}{4}\mathring z )y^{-2\ndmu}\espw \pa_y \espw - (\mu-1)|\frac{\espw}{y^\ndmu}|^2\ dy+ \int_0^\infty   
        y^{-2\ndmu}F_2 {\espw}  \ dy \\
		&\leq \int_0^\infty\Big( -(1+\frac{3-\ga}{4}\frac{\mathring z}{y} )\ndmu +\frac{3}{2}-\mu+\frac{3-\ga}{8}\pa_y \mathring z \Big) |\frac{\espw}{y^\ndmu}|^2\ dy+\left| \langle \frac{F_2}{y^\ndmu},\frac{\espw}{y^\ndmu} \rangle\right|.
	\end{align*}
    Observe that $1+\frac{3-\ga}{4}\frac{\mathring z}{y} \ge 1 - \frac{3-\ga}{1+\ga}\mu$,  $\pa_y \mathring z\le \pa_y \chi \bar z$, and also  $|\pa_y \chi \bar z|\le c/y_{0}^\mu $ for some $c>0$ that follows from 
    Proposition \ref{prop bar z} and Lemma \ref{le: supp chi}. Hence by taking $y_0$ large enough that $ \frac{3-\ga}{8} c/y_{0}^\mu \le \mu /2$,  we deduce that
    \begin{align*}
		\frac12 \pa_\tau\norm{\frac{\espw}{y^{\ndmu}}}_{L^2}^2 
		&\leq\Big( -(1-\frac{3-\ga}{\ga+1}\mu )\ndmu +\frac{3}{2}-\frac\mu2 
        \Big)  \norm{\frac{\espw}{y^{\ndmu}}}_{L^2}^2 +\left| \langle \frac{F_2}{y^\ndmu},\frac{\espw}{y^\ndmu} \rangle\right|
	\end{align*}
     Recall the definition of $\ndmu$ in \eqref{def ndmu}. Choose small enough $\eps_0>0$ and then sufficiently large $b>0$ such that 
    \[
    -(1-\frac{3-\ga}{\ga+1}\mu )\ndmu +\frac{3}{2}-\frac{\mu}{2} + \frac{C_{\epsilon_0}}{b}  < -(1-\mu)\ndmu+\frac32-\frac{1}{2}\mu+\epsilon_0 <0. 
    \]
    Combining the above two inequalities, we obtain the desired estimates. 
\end{proof}

Next we derive $\dot H^\ndmu$ estimates. To that end, we apply $\pa_y^{\ndmu}$ to \eqref{linear PDE of tilde z w} to obtain the equations for $(\pa_y^\ndmu \espz, \pa_y^\ndmu \espw)$: 
\be\label{linear PDE of der of tilde z w}
	\begin{aligned}
	&\partial_\tau\pa_y^{\ndmu} \espz + (y+\frac{\ga+1}{4}\mathring z ) \partial_y^{\ndmu+1} \espz + ((\ndmu+1) (1+ \frac{(\ga+1)}{4}\partial_y \mathring z)+\mu-2)\pa_y^{\ndmu}\espz  \\
&\quad +\frac{3-\ga}{4}\sum_{i=0}^{\ndmu} \binom{\ndmu}{i} \pa_y^{\ndmu+1-i} \mathring z \pa_y^{i} \espw +\frac{\ga+1}{4}\sum_{i=0}^{\ndmu-1} \binom{\ndmu+1}{i} \pa_y^{\ndmu+1-i} \mathring z \pa_y^i \espz	
= \pa_y^\ndmu F_1  ,\\
&		\partial_\tau \pa_y^{\ndmu}\espw + (y+\frac{3-\ga}{4}\mathring z ) \partial_y^{\ndmu+1} \espw + ( \ndmu ( 1+\frac{(3-\ga)}{4} \pa_y \mathring z)+\mu-1)\pa_y^{\ndmu}\espw   \\
 & \quad    +\frac{3-\ga}{4} \sum_{i=1}^{\ndmu-1} \binom{\ndmu}{i-1} \pa_y^{\ndmu+1-i} \mathring z \pa_y^{i} \espw 
 = \pa_y^\ndmu F_2.
	\end{aligned}
\ee
\begin{lemma}[$\dot H^{\ndmu}$ estimates] \label{linear H^n estimates}
Let $\ga\in(1,3)$ and $\mu\in[\frac12,1)$. Consider smooth solutions $(Z,W)$ where $(\pa_y^j Z,\pa_y^jW)|_{(\tau,0)} =0$ for all $\tau\in [\tau_0,\tau_1]$ and $0\le j\le \ndmu-1$ to the transport equations \eqref{linear PDE of tilde z w}. 
	Then there exist constants $b_1 >0$ and $\eps_1>0$  such that for all $b\ge b_1$ the following holds
\beqa
\begin{split}
	\frac12\pa_\tau \Big( \norm{\espz}_{\dot H^{\ndmu}}^2+b \norm{\espw}_{\dot H^{\ndmu}}^2 \Big)
	&\le  -\nu^2 \Big( \norm{\espz}_{\dot H^{\ndmu}}^2+ b\norm{\espw}_{\dot H^{\ndmu}} ^2\Big) +C (\norm{\frac{\espz}{y^{\ndmu}}}_{L^2}^2 + b\norm{\frac{\espw}{y^{\ndmu}}}_{L^2}^2) \\
    &\quad +\left|\langle \pa_y^\ndmu F_1, \pa_y^\ndmu \espz \rangle \right| +b \left|\langle \pa_y^\ndmu F_2, \pa_y^\ndmu \espw \rangle \right| 
\end{split}
\eeqa	
where  $C=C(\eps_1)>0$ is a constant and 
\[
0 < \nu^2 =  (1-\mu)\ndmu - \frac32 + \frac{\mu}{2}  -  \eps_1.
\] 
\end{lemma}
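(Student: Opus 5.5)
We multiply the two equations in \eqref{linear PDE of der of tilde z w} by $\pa_y^\ndmu \espz$ and $b\,\pa_y^\ndmu\espw$ respectively, integrate over $(0,\infty)$, and treat the transport, damping and commutator terms separately, mirroring the proof of Lemma \ref{weighted L^2 estimates}.

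\textbf{Transport and damping terms.} Integrating the transport term by parts gives
\[
-\int (y+\tfrac{\ga+1}{4}\mathring z)\pa_y^{\ndmu+1}\espz\,\pa_y^\ndmu\espz\,dy=\tfrac12\int\big(1+\tfrac{\ga+1}{4}\pa_y\mathring z\big)|\pa_y^\ndmu\espz|^2\,dy .
\]
The boundary term at $y=0$ vanishes because the velocity vanishes there ($\mathring z(0)=0$), and decay kills the term at infinity. Combining this with the damping coefficient yields
\[
-\int\Big((\ndmu+\tfrac12)\big(1+\tfrac{\ga+1}{4}\pa_y\mathring z\big)+\mu-2\Big)|\pa_y^\ndmu\espz|^2\,dy .
\]
Writing $\pa_y\mathring z=\chi\pa_y\bar z+\pa_y\chi\,\bar z$, we use $\pa_y\chi\,\bar z\ge0$ together with Proposition \ref{prop bar z}(2), which gives $1+\frac{\ga+1}{4}\chi\pa_y\bar z\ge 1-\mu$ since $\chi\in[0,1]$. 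The coefficient is then at least $(\ndmu+\frac12)(1-\mu)+\mu-2=(1-\mu)\ndmu-\frac32+\frac\mu2$. This is positive by the choice \eqref{def ndmu}: $\ndmu>\beta+\frac12$ gives $(1-\mu)\ndmu>1+\frac{1-\mu}{2}$.

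For $\espw$ the analogous coefficient is $(\ndmu+\frac12)(1+\frac{3-\ga}{4}\pa_y\mathring z)+\mu-1$, which is strictly larger than the $\espz$ rate. The only dangerous piece, $\frac{3-\ga}{4}\pa_y\chi\,\bar z$, is $O(y_0^{-\mu})$ by Lemma \ref{le: supp chi}, so it is absorbed by taking $y_0$ large, exactly as in the $L^2$ lemma.

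\textbf{Commutator terms.} The commutator terms contain $\pa_y^{\ndmu+1-i}\mathring z\,\pa_y^i\espz$ for $0\le i\le\ndmu-1$, and similarly for $\espw$. The cross term $\frac{3-\ga}{4}\pa_y\mathring z\,\pa_y^\ndmu\espw$ in the $\espz$ equation is handled by Young's inequality: it costs $\eps_1\|\espz\|_{\dot H^\ndmu}^2+C_{\eps_1}\|\espw\|_{\dot H^\ndmu}^2$, and the second piece is absorbed by choosing $b$ large. For each lower term with $i\le\ndmu-1$, we estimate
\[
\|\pa_y^{\ndmu+1-i}\mathring z\,\pa_y^i\espz\|_{L^2}\le \|y^{\ndmu-i}\pa_y^{\ndmu+1-i}\mathring z\|_{L^\infty}\,\Big\|\frac{\pa_y^i\espz}{y^{\ndmu-i}}\Big\|_{L^2}.
\]
Since $\pa_y^j\espz(0)=0$ for $j<\ndmu$, iterated Hardy inequalities give
\[
\Big\|\frac{\pa_y^i\espz}{y^{\ndmu-i}}\Big\|_{L^2}\lesssim\|\espz\|_{\dot H^\ndmu},
\]
and interpolation between $\|\espz/y^\ndmu\|_{L^2}$ and $\|\espz\|_{\dot H^\ndmu}$ gives an $\eps$-small share of $\dot H^\ndmu$ plus a $C_\eps$ multiple of the weighted $L^2$ norm. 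To make the cost small rather than merely bounded, we split the domain. On $y\le\delta$ we use the $\eps$ smallness coming from vanishing profile derivatives: $\pa_y^j\bar z(0)=0$ for $2\le j<\beta$, so $y^{\ndmu-i}\pa_y^{\ndmu+1-i}\bar z=O(y^{\beta-1})\to0$ by \eqref{payi z y<<1}. On $y\ge\delta$ the weight $y^{-\ndmu}$ is bounded below, so these terms are controlled by $C\|\espz/y^\ndmu\|_{L^2}$-type terms, possibly after interpolation with $\dot H^\ndmu$. For large $y$, \eqref{profile be} gives $|y^{\ndmu-i}\pa_y^{\ndmu+1-i}\mathring z|\lesssim y^{-\mu}$, which is bounded. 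All of this yields the claimed $C(\|\espz/y^\ndmu\|^2+b\|\espw/y^\ndmu\|^2)$ term, and collecting everything gives $\nu^2$ with an $\eps_1$ loss.

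\textbf{Main obstacle.} The main difficulty is the top commutator when $\mu\notin\ssmu$. Here the profile is only $C^{\lfloor\beta\rfloor,\beta-\lfloor\beta\rfloor}$, so $\pa_y^{\ndmu+1-i}\mathring z\sim y^{\beta-\ndmu-1+i}$ is singular at $0$. One must check that $y^{\beta-\ndmu-1+i}\pa_y^i\espz$ is still in $L^2$. Using $|\pa_y^i\espz|\lesssim y^{\ndmu-i-1/2}\|\espz\|_{\dot H^\ndmu}$, the product is of size $y^{\beta-3/2}$ near $0$, which is square integrable because $\beta\ge2$. By Hardy it is bounded by $\|y^{\beta-1}\,\pa_y^i\espz/y^{\ndmu-i}\|$, which is both controlled and small near $0$. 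This is exactly where the choice of $\ndmu$ and the vanishing conditions at $y=0$ are used.
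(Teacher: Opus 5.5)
Your proposal is correct and follows essentially the same route as the paper. The shared steps are: integrate the transport term by parts; bound the damping from below via Proposition~\ref{prop bar z}(2); absorb the cross term $\frac{3-\ga}{4}\pa_y\mathring z\,\pa_y^\ndmu\espw$ by Young's inequality and large $b$; and split the commutators into a region near $y=0$ and a region away from it. Near $y=0$ you use Hardy (Lemma~\ref{hardy 1}) plus the $y^{\beta-1}$ smallness of $y^{\ndmu-i}\pa_y^{\ndmu+1-i}\mathring z$; away from $0$ you use the weighted interpolation of Lemma~\ref{G-N with weight} followed by Young.

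One small slip: after integration by parts the $\espw$ coefficient is $(\ndmu-\tfrac12)(1+\tfrac{3-\ga}{4}\pa_y\mathring z)+\mu-1$, not $(\ndmu+\tfrac12)(\cdots)+\mu-1$. Also, the piece $\pa_y\chi\,\bar z\ge 0$ only helps there rather than being dangerous. This rate still exceeds the $\espz$ rate by $\mu$, so your conclusion is unaffected.
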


\begin{proof}
 We take the $L^2$ inner product of \eqref{linear PDE of der of tilde z w} with $(\pa_y^\ndmu \espz, b  \pa_y^\ndmu \espw )$ and integrate by parts to obtain 
 \begin{align*}
 	&\frac12\pa_\tau \Big( \norm{\espz}_{\dot H^{\ndmu}}^2+ b \norm{\espw}_{\dot H^{\ndmu}}^2 \Big)\\
	&= -\int_0^\infty \Big((\ndmu+\tfrac12) (1+\tfrac{(\ga+1)}{4}\partial_y \mathring z)+\mu- 2 \Big) |\pa_y^{\ndmu} \espz|^2  dy -b \int_0^\infty \Big((\ndmu-\tfrac12)(1+\tfrac{(3-\ga)}{4}\partial_y\mathring z) +\mu- 1 \Big) | \pa_y^{\ndmu} \espw|^2  dy   \\
	&- \int_0^\infty \tfrac{3-\gamma}{4} \pa_y \mathring z \pa_y^\ndmu \espw \pa_y^\ndmu \espz dy  - \int_0^\infty \Bigg( \tfrac{3-\ga}{4} \sum_{i=0}^{\ndmu-1}\binom{\ndmu}{i}\partial_y^{\ndmu+1-i} \mathring z  \pa_y^i \espw  +\tfrac{\ga+1}{4}\sum_{i=0}^{\ndmu-1} \binom{\ndmu+1}{i} \pa_y^{\ndmu+1-i} \mathring z \pa_y^i \espz  
    \Bigg)\pa_y^{\ndmu} \espz \ dy  \\
	&-b \int_0^\infty \tfrac{3-\ga}{4}\sum_{i=1}^{\ndmu-1} \binom{\ndmu}{i-1} \pa_y^{\ndmu+1-i} \mathring z \pa_y^{i} \espw \pa_y^{\ndmu} \espw \ dy +\int_0^\infty \pa_y^\ndmu F_1  \pa_y^\ndmu\espz \ dy +b \int_0^\infty \pa_y^\ndmu F_2  \pa_y^\ndmu\espw \ dy \\
    &=: (I) + (II) + (III)
\end{align*}
We estimate $(I), \ (II), \ (III)$ in what follows. For $(I)$, using $-\frac{4\mu}{\ga+1} \le\pa_y \mathring z < 0$ from Proposition \ref{prop bar z}, we have  
\begin{align*}
(I) \le - (\ndmu (1-\mu) + \tfrac{\mu}{2} - \tfrac32) \int_0^\infty |\pa_y^\ndmu \espz|^2 dy 
-  ( \ndmu(1- \mu) + \tfrac32 
\mu  -\tfrac32 ) b \int_0^\infty |\pa_y^\ndmu \espw|^2 dy
\end{align*}
The first term of $(II)$ can be bounded by 
\[
\left| \int_0^\infty \tfrac{3-\gamma}{4} \pa_y \mathring z \pa_y^\ndmu \espw \pa_y^\ndmu \espz dy \right| \le \eps_0  \norm{\espz}_{\dot H^{\ndmu}}^2 + C_{\eps_0} \norm{\espw}_{\dot H^{\ndmu}}^2
\]
where $\eps_0>0$ is to be determined. 
To handle the rest of commutator terms in $(II)$ and $(III)$, we first estimate   
\begin{align*}
    \sum_{i=0}^{\ndmu-1}{\int_0^\infty\Big( \pa_y^{\ndmu+1-i} \mathring z \pa_y^i \espz\Big)^2 \ dy} =:  {\sum_{i=0}^{\ndmu-1} S_z^i} =: S_z 
 \end{align*}
By applying Proposition \ref{prop: mathring z}, $\mathring z$ has different behavior in the different regions of $y$ for each $0\le i\le \ndmu-1$. Consequently, we decompose $S_z^i$ into two integrals: 
 \begin{align*}
 	S_z^i = \int_0^{y_*} \Big( \pa_y^{\ndmu+1-i} \mathring z \pa_y^i \espz\Big)^2 \ dy 
	+  \int_{y_*}^{2y_0e^{\tau-\tau_0}} \Big( \pa_y^{\ndmu+1-i} \mathring z \pa_y^i \espz\Big)^2 \ dy := S_{z,1}^i+S_{z,2}^i 
 \end{align*}
 where $y_*$ is small and to be determined. 
For $S_{z,1}^i$, we separate into two cases: $\mu\in \ssmu$ and $\mu\notin \ssmu$. 
When $\mu\in\ssmu$, as in Proposition  \ref{prop bar z} (3), $\pa_y \mathring z$ and $\pa_y^{ \beta } \mathring z $ are only bounded for small $y$. By using Hardy inequality (cf. Lemma \ref{hardy 1}), we have 
 \be\label{Sz1}
 	S_{z,1}^i \le  C \int_0^{y_*} | \pa_y^i \espz|^2 \ dy \le C  y_\ast^{2(\ndmu-i)}  \norm{\espz}_{\dot H^{\ndmu}}^2.
 \ee
 When $\mu\notin \ssmu$, we estimate 
\be\label{Sz1-2}
 	S_{z,1}^i \leq C\int_0^{y_*} \Big( y^{\beta - \ndmu -1 +i}\pa_y^i \espz\Big)^2 \ dy = C\int_0^{y_*} \Big( y^{\beta -1}\frac{\pa_y^i \espz}{y^{\ndmu-i}}\Big)^2 \ dy \leq C y_\ast^{2(\beta-1)}  \norm{\espz}_{\dot H^{\ndmu}}^2
 \ee
 where we have used $\beta>1 $. In both cases, we conclude that $ S_{z,1}^i \le  C y_* \norm{\espz}_{\dot H^{\ndmu}}^2$. 
Now for $S_{z,2}^i$, we use Proposition \ref{prop: mathring z} and  Lemma \ref{G-N with weight} 
to bound for $0\le i\le \ndmu-1$
 \beqa\label{Sz3}
 	 \int_{y_*}^{2y_0e^{\tau-\tau_0}} \Big( \pa_y^{\ndmu+1-i} \mathring z \pa_y^i \espz\Big)^2 \ dy &\leq C \int_{y_*}^{2y_0e^{\tau-\tau_0}} \Big( y^{-\mu-\ndmu+i}\pa_y^i \espz\Big)^2 \ dy \le C_* \norm{\frac{\espz}{y^\ndmu}}_{L^2}^{2- \frac{2i}{\ndmu}} \norm{\espz}_{\dot H^{\ndmu}}^{\frac{2i}{\ndmu}} 
 \eeqa
 where $C_*$ depends on $y_*$.  
Hence, we deduce that 
 \be\label{Sz}
 	S_z  \leq C y_*   \norm{\espz}_{\dot H^{\ndmu}}^2 + C_* \sum_{i=0}^{\ndmu-1} \norm{\frac{\espz}{y^\ndmu}}_{L^2}^{2- \frac{2 i}{\ndmu}} \norm{\espz}_{\dot H^{\ndmu}}^{\frac{2i}{\ndmu}}. 
\ee
Similarly, we can derive 
  \be\label{Sw}
 	\sum_{i=0}^{\ndmu-1}  \int_0^\infty\Big(  \pa_y^{\ndmu+1-i} \mathring z \pa_y^i \espw\Big)^2 \ dy \leq  C y_*   \norm{\espw}_{\dot H^{\ndmu}}^2 + C_* \sum_{i=0}^{\ndmu-1}  \norm{\frac{\espw}{y^\ndmu}}_{L^2}^{2- \frac{2 i}{\ndmu}} \norm{\espw}_{\dot H^{\ndmu}}^{\frac{2i}{\ndmu}}. 
 \ee
By putting the estimates together and using Young's inequality, we arrive at  
 \begin{align*}
 	&\frac12\pa_\tau \Big( \norm{\espz}_{\dot H^{\ndmu}}^2+ b \norm{\espw}_{\dot H^{\ndmu}}^2 \Big)\\
	&\le - (\ndmu (1-\mu) + \tfrac{\mu}{2} - \tfrac32 -\eps_0 - C y_* - C_*\eps ) \int_0^\infty |\pa_y^\ndmu \espz|^2 dy 
\\
&\quad -  ( \ndmu(1- \mu) + \tfrac32 
\mu  -\tfrac32 - \tfrac{C_{\eps_0}}{b} - \tfrac{C y_*}{b} - C_*\eps ) b \int_0^\infty |\pa_y^\ndmu \espw|^2 dy \\
& \quad + C_{*, \eps} \Big(  \norm{\frac{\espz}{y^{\ndmu}}}_{L^2}^2 + b  \norm{\frac{\espw}{y^{\ndmu}}}_{L^2}^2\Big) + \left|\langle \pa_y^\ndmu F_1, \pa_y^\ndmu \espz \rangle \right| +b \left|\langle \pa_y^\ndmu F_2, \pa_y^\ndmu \espw \rangle \right|
\end{align*}
  Recall the definition of $\ndmu$ in \eqref{def ndmu}. We may choose $\eps_0>0$, $y_*>0$ and $\eps>0$ small so that 
  \[
 \ndmu (1-\mu) + \tfrac{\mu}{2} - \tfrac32 -\eps_0 - C y_* - C\eps >0  
    \]
Then for sufficiently large $b>0$,  
    \[
    \ndmu(1- \mu) + \tfrac32 
\mu  -\tfrac32 - \tfrac{C_{\eps_0}}{b} - \tfrac{C y_*}{b} - C_*\eps \ge \ndmu (1-\mu) + \tfrac{\mu}{2} - \tfrac32 -\eps_0 - C y_* - C_*\eps >0  
    \]
    which implies the desired energy inequality. 
\end{proof}

\section{Nonlinear analysis}\label{section 5}

In this section, we analyze the full nonlinear system \eqref{full system Z W} based on energy estimates and bootstrap argument. In addition to the energy  estimates in $\dot H^\ndmu$ and singular weighted spaces, we must derive low order estimates to close nonlinear estimates by regulating suitable decay or growth of norms. 

The following is our key bootstrap estimates. 
\begin{proposition}\label{bootstrap lemma}
	Let $\ga\in(1,3)$ and $\mu\in[\frac12,1)$. Assume that initial modulation variables satisfy \eqref{initialziwi}. 
	Then there exist $y_0=e^{\tau_0}$, and $\sigma_0,\epzero,\epone, \eptwo >0$ 
    such that
	\be\label{sequence a_1, a_2, a_0}
		0< \eptwo<\epone<\min\{\frac{(4\mu-1)(1-\mu)}{2}, \frac{2(1-\mu)\ndmu-3+\mu}{6},\epzero\}
	\ee
	the following holds. 
    Assume that there exists a solution 
    $(\espz, \espw)\in H^\ndmu $ 
     to the
	the system \eqref{full system Z W} 
	on $[\tau_0,\tau_1]$, satisfies 
	and the bootstrap assumption
	\begin{align}
		\norm{(\espz, \espw)}_{\dot H^\ndmu}+\norm{(\frac{\espz}{y^\ndmu},\frac{\espw}{y^\ndmu})}_{L^2}\leq 2e^{-\epone \tau}\ \text{ and }\tau\in[\tau_0,\tau_1],\\
        \norm{(\pa_y\espz, \pa_y\espw)}_{L^2} \leq 2Be^{-(\mu-\frac12)\eptwo \tau}\ \text{ and }\tau\in[\tau_0,\tau_1],\\
		\norm{(\espz,\espw)}_{L^2} \leq 2Be^{(\frac32-\mu)\tau} \ \text{ and }\tau\in[\tau_0,\tau_1]. 
	\end{align}
	Then, for $\tau_0$ sufficiently large and $\norm{(\espz(\tau_0,\cdot), \espw(\tau_0,\cdot))}_{H^\ndmu}\leq \sigma_0$, we have the following improvement of the bootstrap assumptions:
	\begin{align}
		\norm{(\espz, \espw)}_{\dot H^\ndmu}+\norm{(\frac{\espz}{y^\ndmu},\frac{\espw}{y^\ndmu})}_{L^2} \leq e^{-\epone \tau}\ \text{ and }\tau\in[\tau_0,\tau_1],\label{payn z w}\\
        \norm{(\pa_y\espz, \pa_y\espw)}_{L^2} \leq Be^{-(\mu-\frac12)\eptwo \tau}\ \text{ and }\tau\in[\tau_0,\tau_1]\label{pay z w},\\
		\norm{(\espz,\espw)}_{L^2} \leq Be^{(\frac32-\mu)\tau} \ \text{ and }\tau\in[\tau_0,\tau_1]\label{L2}. 
	\end{align} 
\end{proposition}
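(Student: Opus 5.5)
We close the three bootstrap bounds in turn, starting with the top-order pair and working down to $L^2$. The main tools are the linear estimates of Lemma \ref{weighted L^2 estimates} and Lemma \ref{linear H^n estimates}, applied to \eqref{full system Z W}, and the modulation decay $|(z_i,w_i)|\le Ce^{-\epzero\tau}$ from Proposition \ref{trapping}.

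\textbf{Step 1: top-order energy.} Write \eqref{full system Z W} in the form \eqref{linear PDE of tilde z w} with
$$F_1=\mathcal S_z-\nz-\nmz,\qquad F_2=-\nw-\nmw.$$
Set
$$\mathcal E(\tau)=\norm{\espz}_{\dot H^\ndmu}^2+b\norm{\espw}_{\dot H^\ndmu}^2+M\Big(\norm{\tfrac{\espz}{y^\ndmu}}_{L^2}^2+b\norm{\tfrac{\espw}{y^\ndmu}}_{L^2}^2\Big),$$
with $M$ large enough to absorb the constant $C$ in Lemma \ref{linear H^n estimates}. Adding the two lemmas gives
$$\tfrac12\pa_\tau\mathcal E\le-\kappa\,\mathcal E+\text{(forcing pairings)},$$
where $\kappa=\min(\sigma^2,\nu^2)/2$. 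By \eqref{sequence a_1, a_2, a_0}, $\kappa>\epone$.

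The forcing pairings are handled term by term.
\begin{itemize}
\item[(a)] \emph{Source $\mathcal S_z$.} It is supported in $y\ge y_0e^{(1-\mu)(\tau-\tau_0)}$ by Lemma \ref{le: supp chi}. With Proposition \ref{prop: mathring z}, its weighted $L^2$ and $\dot H^\ndmu$ norms are bounded by a negative power of $y_0e^{(1-\mu)(\tau-\tau_0)}$. Since $y_0=e^{\tau_0}$, this is $\lesssim e^{-c\tau}$ with $c>\epone$ once $\tau_0$ is large.
\item[(b)] \emph{Modulation terms $\nmz,\nmw$.} The equations \eqref{ode of ziwi with z1=0} make these vanish at $y=0$ to order $\ndmu-1$. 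The remaining linear part is $O(e^{-\epzero\tau})$ times smooth functions supported in $y\le1$, plus profile interactions of the type $\partial_y^k\mathring z\, y^i$. These are controlled using \eqref{payi z y<<1}. This is exactly where $z_1=w_1=0$ is needed, and where $w_2=0$ is needed for $\mu\in\mathcal H$, so that $\pa_y^\ndmu$ of the interaction is square integrable near $0$.
\item[(c)] \emph{Nonlinear terms $\nz,\nw$.} The top-order transport part $(\dots)\pa_y^{\ndmu+1}\espz$ is integrated by parts. This costs $\|\pa_y(\espz,\espw,\mz,\mw)\|_{L^\infty}$, which is small by Gagliardo--Nirenberg between $\dot H^1$ and $\dot H^\ndmu$. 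Commutator terms are estimated with the Hardy inequality and the weighted interpolation already used for $S_z$.
\end{itemize}
Gronwall then gives $\mathcal E^{1/2}\le Ce^{-\epone\tau}\big(\sigma_0+e^{-(\min(\kappa,\epzero)-\epone)\tau_0}\big)$, which is at most $\tfrac12 e^{-\epone\tau}$ for $\tau_0$ large and $\sigma_0$ small. This proves \eqref{payn z w}.

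\textbf{Step 2: lower-order norms.} These norms cannot decay in general, because the damping is too weak: for $\dot H^1$ the coefficient is $\tfrac12+(\mu-1)+\tfrac{\ga+1}{4}\pa_y\mathring z$. So we only need the prescribed rates. For $\dot H^1$, differentiate the equations once and run an $L^2$ energy estimate. The linear coefficient is bounded below by $\mu-\tfrac12$ times a factor, using Proposition \ref{prop bar z}(2). The bad region is the set where $\pa_y\mathring z$ is close to $-\tfrac{4\mu}{\ga+1}$, namely $y\lesssim 1$. There the norm is controlled by $\|\pa_y\espz\|_{L^2(0,1)}\lesssim\|\espz/y^\ndmu\|_{L^2}^{\theta}\|\espz\|_{\dot H^\ndmu}^{1-\theta}\lesssim e^{-\epone\tau}$. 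This yields a decay rate $(\mu-\tfrac12)\eptwo<\epone$ with a constant $B$ fixed by the initial data, which gives \eqref{pay z w}. For $L^2$, the damping coefficient is $\tfrac12-\tfrac{\ga+1}{8}\pa_y\mathring z+\mu-1\ge \mu-\tfrac32$. This allows growth at rate at most $\tfrac32-\mu$, and the forcing (mainly $\mathcal S_z$, whose $L^2$ norm grows with the support) is of the same rate or smaller. This gives \eqref{L2} after choosing $B$ large relative to the forcing constant.

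\textbf{Main obstacle.} I expect step (b) of Step 1 to be hardest: the $\dot H^\ndmu$ and weighted-$L^2$ bounds on $\nmz$ and its products with $\pa_y^{k}\mathring z$ near $y=0$ for non-integer $\beta$, where $\pa_y^{\ndmu+1-i}\mathring z\sim y^{\beta-\ndmu-1+i}$. One must check integrability of $y^{2(\beta-\ndmu-1+i)}y^{2j}$ case by case, and the constraints $z_1=w_1=0$ (and $w_2=0$ for $\mu\in\mathcal H$) must remove the borderline terms. A secondary difficulty is keeping the growing $L^2$ norm from feeding back into the top-order estimates. This is avoided by using only weighted and $\dot H^\ndmu$ norms in Step 1, and the $L^2$ bound only on the far-field cutoff region, where the weights $y^{-\mu}$ compensate.
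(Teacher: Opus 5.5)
Your Step 1 is essentially the paper's argument (Lemma \ref{ode Hn}, then integration in $\tau$ with $y_0=e^{\tau_0}$) and is fine. The gap is in Step 2, in how you treat the $\pa_y\mathring z$ terms of the $\dot H^1$ and $L^2$ energy identities.

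You place the anti-damping in $y\lesssim 1$. But by Proposition \ref{prop: mathring z}, $\pa_y\mathring z$ is only $O(y^{-\mu})$ for $y\ge 1$, on a support growing like $e^{\tau-\tau_0}$.

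\emph{The $\dot H^1$ estimate.} The damping coefficient of $|\pa_y\espz|^2$ is $\mu-\frac12+\frac{3(\ga+1)}{8}\pa_y\mathring z$, which equals $-\frac{1+\mu}{2}$ at $y=0$. Proposition \ref{prop bar z}(2) gives no lower bound for it. At $\mu=\frac12$ this coefficient is negative on the whole growing region where $\chi=1$. A split at a fixed $y=R$ then leaves a far piece bounded only by $CR^{-\mu}\norm{\pa_y\espz}_{L^2}^2$ with nothing to absorb it. Gronwall gives growth $e^{CR^{-\mu}\tau}$, contradicting \eqref{pay z w}, which is a uniform bound when $\mu=\frac12$. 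For $\mu>\frac12$ a large fixed $R(\mu)$ would suffice here, but not at $\mu=\frac12$.

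\emph{The $L^2$ estimate.} Here the sign is reversed. One has
$\frac12\pa_\tau\norm{\espz}_{L^2}^2=\int(\frac32-\mu-\frac{\ga+1}{8}\pa_y\mathring z)\espz^2\,dy+\dots$,
and $\pa_y\mathring z<0$ wherever $\chi=1$. So the local growth rate exceeds $\frac32-\mu$; it is $\frac32-\frac{\mu}{2}$ at $y=0$. Since \eqref{L2} asks for exactly the rate $\frac32-\mu$, there is no margin. A fixed-split bound $CR^{-\mu}\norm{\espz}_{L^2}^2$ yields $e^{(3-2\mu+CR^{-\mu})\tau}$ for every $\mu$. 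For the same reason, forcing at the same rate $e^{(3-2\mu)\tau}$, which you allow, would cost a factor $\tau$. It has to be exponentially smaller, as the $\mathcal S_z$ contribution in fact is once $y_0=e^{\tau_0}$.

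\emph{The missing idea} is a $\tau$-dependent split at $y=e^{\alpha\tau}$ with $\alpha>0$ small. The paper takes $\alpha=\frac{\epone}{2(\ndmu-1)}$ for $\dot H^1$ and $\alpha=\frac{\epone}{2\ndmu-\mu}$ for $L^2$.

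On the near region $[0,e^{\alpha\tau}]$, Hardy gives $\int_0^{e^{\alpha\tau}}|\pa_y\espz|^2\,dy\lesssim e^{2(\ndmu-1)\alpha\tau}\norm{\espz}_{\dot H^\ndmu}^2\lesssim e^{-\epone\tau}$. Similarly, $\int_0^{e^{\alpha\tau}}|\pa_y\mathring z|\espz^2\,dy\lesssim e^{(2\ndmu-\mu)\alpha\tau}\norm{\espz/y^\ndmu}_{L^2}^2\lesssim e^{-\epone\tau}$. So the near region is paid for by the decay from Step 1.

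On the far region $[e^{\alpha\tau},\infty)$, use $y^{-\mu}\le e^{-\mu\alpha\tau}$. This turns the anti-damping into $B^2e^{-\mu\alpha\tau}$ times the bootstrap rate. After multiplying by the integrating factor ($e^{(2\mu-1)\tau}$ for $\dot H^1$, $e^{-(3-2\mu)\tau}$ for $L^2$), this is integrable in $\tau$. Its contribution is small relative to the bootstrap bound once $\tau_0$ is large.

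The same split, with $\norm{\espw/y}_{L^2}\lesssim\norm{\pa_y\espw}_{L^2}$ on the far piece, controls the cross terms $\int\pa_y^2\mathring z\,\espw\,\pa_y\espz\,dy$ in the $\dot H^1$ estimate. With this device your Step 2 closes. Without it, neither \eqref{pay z w} at $\mu=\frac12$ nor \eqref{L2} for any $\mu$ follows.
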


\

The proof of Proposition \ref{bootstrap lemma} is  based on a series of estimates of modulation terms and a priori energy estimates for $(Z,W)$.

\subsection{Estimation of modulation and source terms}

We begin with the estimation of $\mz$ and $\mw$ followed by $\nmz$, $\nmw$. 

\begin{proposition}\label{prop payi mz mw}
	Let $\ga\in(1,3)$ and $\mu\in[\frac12, 1)$. 
    Assume that initial modulation variables satisfy \eqref{initialziwi}. Then for $i\in\{0,1,\dots,\ndmu+1\}$ and $p\in[1,\infty]$, we have 
	\be
		\norm{\frac{\mz}{y}}_{L^p}+\norm{\frac{\mw}{y}}_{L^p}+\norm{ \pa_y^i \mz}_{L^p}+\norm{\pa_y^i \mw }_{L^p} \leq Ce^{-\epzero \tau} 
	\ee
	where $\epzero$ is defined in \eqref{g0} and $C=:C(\ga,\mu)$ is a positive constant. 
\end{proposition}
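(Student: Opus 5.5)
The proof is essentially bookkeeping: $\mz$ and $\mw$ are finite sums of compactly supported smooth functions times the scalar modulation coefficients, so every norm in the statement reduces to a bound on $|z_i(\tau)|,|w_i(\tau)|$.

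\textbf{Step 1: pointwise control of the coefficients.} By \eqref{def mz mw},
\[
\mz=\sum_{i=2}^{\ndmu-1}\frac{z_i(\tau)}{i!}\,\varphi(y)y^i ,
\]
and each $\varphi(y)y^i$ is smooth and supported in $[0,1]$, with the same structure for $\mw$. We first check that the indices $2\le i\le \ndmu-1$ fall within the range covered by Section~\ref{section 3}. By the remark following Theorem~\ref{thm 4.1}, $\ndmu-1\le\lceil\beta\rceil$ in all three cases (i)--(iii). Since the initial data satisfy \eqref{initialziwi}, the constraint set $I^{\lfloor\beta\rfloor}$ is imposed, and Proposition~\ref{trapping} together with the notation \eqref{g0} of Remark~\ref{remark 3.7} gives
\[
|z_i(\tau)|+|w_i(\tau)|\le C e^{-\epzero(\tau-\tau_0)},\qquad 2\le i\le \ndmu-1 .
\]
The constant $C$ depends polynomially on the initial modulation values, which lie in $B(0,e^{-\epzero\tau_0})$. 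Hence $Ce^{\epzero\tau_0}$ is bounded uniformly in $\tau_0$, and we obtain $|z_i|+|w_i|\le Ce^{-\epzero\tau}$ with $C=C(\ga,\mu)$. This is the one point needing care: the implicit constants in Lemma~\ref{lemma 3.7} are polynomials in $w_j(\tau_0)$ and in $z_{\lceil\beta\rceil}(\tau_0)-q_{\lceil\beta\rceil}$, and smallness of the data is what makes these polynomials times $e^{\epzero\tau_0}$ bounded. When $\mu\notin\ssmu$, the $z_{\lceil\beta\rceil}$ term decays only like $e^{-k_{\lceil\beta\rceil}(\tau-\tau_0)}$, which is exactly the reason $\epzero$ was defined as in \eqref{g0}.

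\textbf{Step 2: the spatial factors.} For $0\le i\le\ndmu+1$ and $p\in[1,\infty]$ we have
\[
\norm{\pa_y^i(\varphi y^k)}_{L^p}\le C(k,i),
\]
since the function is smooth and supported in $[0,1]$. Also $\varphi y^k/y=\varphi y^{k-1}$ with $k\ge2$, which is again smooth, bounded and compactly supported. This is why the quotient norms $\norm{\mz/y}_{L^p}$ cause no difficulty: the sum starts at $i=2$, so there is no singularity at $y=0$.

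\textbf{Step 3: conclusion.} By the triangle inequality over the finitely many indices,
\[
\norm{\pa_y^i\mz}_{L^p}\le \sum_k \frac{|z_k|}{k!}\,C(k,i)\le Ce^{-\epzero\tau},
\]
and the same argument handles $\mw$, $\mz/y$ and $\mw/y$.
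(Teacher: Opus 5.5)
Your proof is correct and follows the paper's argument: write $\mz,\mw$ as finite sums of the compactly supported smooth functions $\varphi y^k$ with $k\ge 2$ times the coefficients, and bound those coefficients with Proposition~\ref{trapping}. You also justify replacing $e^{-\epzero(\tau-\tau_0)}$ by $e^{-\epzero\tau}$ using the smallness $B(0,e^{-\epzero\tau_0})$ of the data. This works because the coefficient polynomials in Lemma~\ref{lemma 3.7} have no constant term (zero data give the zero solution), and it spells out a step that the paper leaves implicit in Remark~\ref{remark 3.7}.
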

\begin{proof}
    We begin by recalling the definitions of $\mz$ and $\mw$ from \eqref{def mz mw}. A direct computation yields
    \begin{align*}
        \frac{\mz}{y} &= \sum_{j=2}^{\ndmu-1}  \frac{z_j(\tau)}{j!}\varphi(y)y^{j-1},\\
        \pa_y^i\mz &= \sum_{j=2}^{\ndmu-1}  \frac{z_j(\tau)}{j!}\pa_y^i(\varphi(y)y^j) = \sum_{j=2}^{\ndmu-1}  \sum_{\ell=0}^{\min \{i,j\}} c_{i\ell }\frac{z_j(\tau)}{j!}\pa_y^{i-\ell}\varphi(y)y^{j-\ell}.
    \end{align*}
    Since $\varphi$ is compactly supported and using Proposition \ref{trapping}, we have
    \[
       \norm{\frac{\mz}{y}}_{L^p}+ \norm{\pa_y^i\mz}_{L_p} \leq C \sum_{j=2}^{\ndmu-1}  | z_j(\tau) | \leq C e^{-\epzero \tau} 
    \]
    where $C$ is a constant depending on $\ndmu$. The corresponding estimates for $\mw$ follow analogously.
 \end{proof}
\begin{lemma}\label{prop Tz Tw /y}
Let $\ga\in(1,3)$ and $\mu\in[\frac12, 1)$. 
Assume that initial modulation variables satisfy \eqref{initialziwi}. Then for $i\in\{0,1,\dots,\ndmu\}$, we have 
    \be
        \norm{ (\pa_y^i\nmz,\pa_y^i\nmw)}_{L^2} +\norm{( \frac{\nmz}{y^{\ndmu}}, \frac{\nmw}{y^{\ndmu}} )}_{L^2}\le C e^{-\epzero \tau}
    \ee
    where $\epzero$ is defined in \eqref{g0} and $C=:C(\ga,\mu)$ is a positive constant.
\end{lemma}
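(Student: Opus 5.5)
We have to bound $\nmz$ and $\nmw$, defined in \eqref{nz nmz}, in $H^{\ndmu}$-type norms together with the singular weight $y^{-\ndmu}$. The key observation is that, by construction, $\mz$ and $\mw$ are the localized Taylor polynomials of $(\tilde z,\tilde w)$ at $y=0$, and the coefficients $(z_i,w_i)$ solve \eqref{ode of ziwi with z1=0}. That ODE is precisely the vanishing condition at $y=0$ of the $i$-th derivative of the full equation. Consequently $\nmz$ and $\nmw$ vanish to order $\ndmu-1$ at the origin, up to contributions that come only from the profile's non-smooth part or from the region $y\ge \frac12$ where $\varphi\neq 1$.

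I would argue in the following order.

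\textbf{Step 1: the region $y\ge \frac12$.} On the support of $\varphi$ intersected with $[\frac12,1]$, the weight $y^{-\ndmu}$ is bounded. All terms of $\nmz$ are products of $(z_i,\pa_\tau z_i,w_i,\pa_\tau w_i)$ with smooth bounded functions: derivatives of $\varphi y^i$, and $\mathring z,\pa_y\mathring z$ and their derivatives, which are bounded on $[\frac12,1]$ by Proposition \ref{prop: mathring z}. From \eqref{ode of ziwi with z1=0} we get $|\pa_\tau z_i|\lesssim |z_i|+|w_i|+|N^i_z|_{y=0}|$. By Proposition \ref{trapping} and the remark defining $\epzero$, all of these are $\le Ce^{-\epzero\tau}$, exactly as in Proposition \ref{prop payi mz mw}. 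This gives the $L^2$ bounds of $\pa_y^i$ and of the weighted quantity on $[\frac12,1]$.

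\textbf{Step 2: the region $y\le\frac12$, where $\varphi\equiv1$.} Here $\mz=\sum_{i=2}^{\ndmu-1}z_iy^i/i!$ is an honest polynomial. I would expand $\lz\mz+\frac{3-\ga}4\pa_y\mathring z\,\mw$ and the quadratic term, replacing $\mathring z=\bar z$ by its expansion \eqref{z y<<1}. Inserting $\pa_\tau z_i$ from \eqref{ode of ziwi with z1=0}, the coefficient of $y^i$ for $i\le \ndmu-1$ cancels. This is where $z_0=w_0=z_1=w_1=0$ and the exact forms of $k_i,g_i$ and $N^i|_{y=0}$ enter, and it uses the fact that $\pa_y^j\bar z(0)=0$ for $2\le j<\beta$. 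What survives falls into three groups:
\begin{enumerate}
\item[(a)] polynomial terms of degree $\ge\ndmu$, i.e.\ $O(y^{\ndmu})\cdot e^{-\epzero\tau}$ (products of two polynomials, truncated);
\item[(b)] terms involving $\bar z+\frac{4\mu}{\ga+1}y=c_1y^\beta+O(y^{2\beta-1})$ multiplied by $\pa_y\mz$, $\mz$ or $\mw$, which are of size $y^{\beta+1}$ times $z_i,w_i$ (with $i\ge2$);
\item[(c)] the analogous terms with $\pa_y\bar z$.
\end{enumerate}
For integer $\beta$, the Taylor terms of $\bar z$ of order $\le\ndmu-1$ are already absorbed into $N^i|_{y=0}$, and the remainders are $O(y^{\ndmu})$.

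\textbf{Step 3: checking integrability.} For $y^{-\ndmu}\nmz\in L^2(0,\frac12)$ we need remainder exponents $>\ndmu-\frac12$. Likewise, $\pa_y^{\ndmu}$ of a term $y^{\beta+j-1}$ must satisfy $\beta+j-1-\ndmu>-\frac12$. Since $\ndmu=\lfloor\beta+\frac12\rfloor+1\le\beta+\frac32$, the worst case is $j=2$, and it requires $\beta+1-\ndmu>-\frac12$, i.e.\ $\ndmu<\beta+\frac32$. This holds except when $\beta+\frac12\in\mathbb N$, that is, $\mu\in\mathcal H$. In that case the offending $j=2$ term is the interaction $\frac{3-\ga}{4}\pa_y\bar z\, w_2y^2/2$, i.e.\ the profile's $y^{\beta-1}$ piece times $w_2y^2$. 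It is killed by the constraint $w_2=0$ in \eqref{initialziwi}, which propagates because $w_2(\tau)=e^{-g_2(\tau-\tau_0)}w_2(\tau_0)$ by \eqref{w2z2 sol}. The $z_2$ interactions carry the factor $y\pa_y\bar z$ or $\bar z$, hence already have an extra power of $y$.

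For the derivatives $\pa_y^i$ with $i<\ndmu$, the same exponents suffice a fortiori. The bound \eqref{payi z y<<1} controls $\pa_y^k\bar z$ by $y^{\beta-k}$ (respectively by the integer-case version), and the Leibniz expansion then gives each term as a coefficient of size $e^{-\epzero\tau}$ times $y^{\text{exponent}}$ with exponent $>-\frac12$.

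The main obstacle is Step 3, the precise bookkeeping in the non-integer case: one must verify that every term of $\pa_y^{\ndmu}(\pa_y^{k}\bar z\cdot y^{j})$ with $j\ge2$ gives an exponent exceeding $-\frac12$, and isolate exactly the half-integer exceptional term. I would handle this by writing $\bar z=-\frac{4\mu}{\ga+1}y+R(y)$ with $|\pa_y^kR|\le Ay^{\beta-k}$, observing that the linear part contributes only polynomials, which cancel by Step 2, and then bounding the $R$-contributions uniformly.
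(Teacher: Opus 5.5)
Your overall strategy is the same as the paper's. The modulation ODEs \eqref{ode of ziwi with z1=0} kill the Taylor coefficients of $\nmz,\nmw$ at $y=0$ up to order $\ndmu-1$, and the compact support of $\varphi$ handles everything away from the origin. What remains is the integrability of the non-smooth profile interactions, which behave like $y^{\beta+i-1}$ and need $\beta+i-1-\ndmu>-\frac12$; this fails only at $i=2$ for $\mu\in\mathcal H$.

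\textbf{The error.} Your handling of that exceptional case rests on a false claim: the $z_2$ interactions do \emph{not} carry an extra power of $y$. Write $R=\bar z+\frac{4\mu}{\ga+1}y=c_1y^\beta+\dots$. The non-polynomial part of $\frac{\ga+1}{4}\big(\bar z\,\pa_y(\frac{z_2}{2}y^2)+\pa_y\bar z\,\frac{z_2}{2}y^2\big)$ is $\frac{\ga+1}{4}z_2\big(yR+\frac12 y^2\pa_yR\big)\approx \frac{\ga+1}{4}c_1\big(1+\frac{\beta}{2}\big)z_2\,y^{\beta+1}$. The $w_2$ term gives $\frac{3-\ga}{8}w_2\,y^2\pa_yR\approx\frac{3-\ga}{8}c_1\beta\, w_2\,y^{\beta+1}$, which is exactly the same power. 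The factors $\bar z\cdot y$ and $y\pa_y\bar z\cdot y$ are of the same order as $\pa_y\bar z\cdot y^2$. For $\mu\in\mathcal H$ one has $\beta+1-\ndmu=-\frac12$, so both $y^{-\ndmu}$ times the $z_2$ term and $\pa_y^{\ndmu}$ of it behave like $y^{-1/2}$ near $0$. That is not square integrable, since $\beta\notin\mathbb N$ and the coefficient is nonzero. So setting $w_2=0$ alone, with your reasoning, does not close the argument.

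\textbf{The fix.} The hypothesis \eqref{initialziwi} also puts the data in $I^{\lfloor\beta\rfloor}$, and $\lfloor\beta\rfloor\ge 2$. Hence $z_2(\tau_0)=q_2(w_2(\tau_0))=-\frac{3-\ga}{5\ga-3}w_2(\tau_0)=0$ once $w_2(\tau_0)=0$. By \eqref{w2z2 sol} this gives $z_2\equiv w_2\equiv 0$ for all $\tau\ge\tau_0$. All $i=2$ modulation terms then vanish in both $\nmz$ and $\nmw$. The worst remaining interaction is $i=3$, with exponent $\beta+2-\ndmu=\frac12>-\frac12$. With this correction your proof goes through and matches the paper's argument, which invokes \eqref{initialziwi} at precisely this point.
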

\begin{proof}
    From the definition of $\mz$ and $\mw$,  we claim that
    \[
        \pa_y^i\nmz(\tau,0) = \pa_y^i\nmw (\tau,0)=0
    \]
    for any $i\in\{0,1,\ndmu-1\}$. 
    To justify this, we expand $\nmz$ and $\nmw$ in powers of $y$, apply $\pa_y^i$ to the resulting expressions, and then evaluate at $y=0$. The terms that remain coincide exactly with the modulation equations for $(z_i,w_i)$ given in \eqref{ode of ziwi with z1=0}.
To illustrate the argument, we present the proof for $\nmz$, and the case of $\nmw$ follows analogously. 
    By recalling \eqref{nz nmz}, we rearrange $\nmz$ in terms of $y^i$: 
    \beqa\label{reorde nmz}
       \nmz &= \varphi\sum_{i=2}^{\ndmu-1}\frac{1}{i!}\Big(\pa_\tau z_i  + z_i (i +\mu-1)   +\sum_{\ell=2}^{i-1} \frac{\ell i!}{\ell! (i+1-\ell)!} z_{\ell}(\frac{3-\ga}{4} w_{i+1-\ell}+\frac{\ga+1}{4}z_{i+1-\ell}) \Big) y^i \\
        &+\varphi\sum_{i=2}^{\ndmu-1} \Big(\frac{z_i}{i!}(i\frac{\ga+1}{4}\frac{\mathring z}{y}+\frac{\ga+1}{4}\partial_y \mathring z ) y^i +\frac{w_i}{i!}\frac{3-\ga}{4}\partial_y \mathring z y^i\Big)  \\
        &+\pa_y\varphi\sum_{i=2}^{\ndmu-1}\frac{z_i}{i!} \Big((y+\frac{\ga+1}{4}\mathring z) y^i+  (\frac{\ga+1}{4}\mz+\frac{3-\ga}{4}\mw) y^i
        \Big) \\
      &+ \varphi \sum_{i=\ndmu}^{2\ndmu -3} \sum_{\ell=2}^{i-1} \frac{\ell i!}{\ell! (i+1-\ell)!} z_{\ell}(\frac{3-\ga}{4} w_{i+1-\ell}+\frac{\ga+1}{4}z_{i+1-\ell}) \Big) y^i
 \eeqa
    Let $m\in\{0,1,\dots,\ndmu-1\}$. We then apply $\pa_y^m$ to the above equation and evaluate at $y=0$:
    \begin{align*}
        \pa_y^m \nmz(\tau,0) &= \pa_\tau z_m + (i-1 -i\mu)z_m -\frac{(3-\ga)\mu}{\ga+1} w_m+  \sum_{\ell=2}^{m-2} \frac{\ell m!}{\ell! (m+1-\ell)!}z_\ell(\frac{3-\ga}{4} w_{i-\ell}+\frac{\ga+1}{4}z_{i-\ell}) \\
       &+\sum_{i=2}^{m-1}\frac{3-\ga}{4}  \binom{m}{i} \pa_y^{m+1-i} \mathring z(0)w_i+  \binom{m}{i} \frac{\ga+1}{4} \big(\pa_y^{m-i} \mathring z (0)z_{i+1}+ \pa_y^{m+1-i} \mathring z(0) z_i\big) =0 
    \end{align*}
       by using \eqref{ode of ziwi with z1=0}. By using the compactness of $\varphi$ and Proposition \ref{trapping} , the inequality holds for $i\in\{0,1,\dots,\ndmu-1\}$. 
   We then proceed to derive the $L^2$ integrability of $\frac{\nmz}{y^\ndmu}$ and $\pa_y^\ndmu \nmz$. For $\mu\in \ssmu$, the statement holds trivially, since $\nmz$ is smooth and compactly supported. It therefore suffices to consider the case $\mu\notin \ssmu$. In \eqref{reorde nmz}, the first and fourth lines consist of monomials $y^i$ with integer $i$, while the third line vanishes at $y=0$ due to the factor $\pa_y\varphi$. Hence, these contributions are integrable. 
 The remaining terms in the second line involve $\pa_y \mathring z\, y^{i}$ and $\mathring z\, y^{i-1}$, which are only H\"{o}lder continuous for $y\ll 1$, since $\mathring z(y)\sim y+y^\beta$ as $y\ll 1$ by Proposition \ref{prop bar z}. These terms require separate treatment. 
 For $y\ll 1$, we have
    \begin{align*}
    \frac{\mathring z y^{i-1}}{y^\ndmu},  \frac{\pa_y\mathring z y^{i}}{y^\ndmu},   \pa_y^{\ndmu} \big(\mathring z y^{i-1} \big),\  \pa_y^{\ndmu} \big(\pa_y \mathring z y^{i}\big) \sim y^{\beta+i-1-\ndmu}.
   \end{align*}
 Since the initial modulation variables satisfy \eqref{initialziwi} and from $\ndmu$ defined in \eqref{def ndmu}, we have $\beta+i-1-\ndmu >-\frac12$ for $i\geq 2$, which ensures the square integrability.  This completes the proof.
\end{proof}

We now estimate far-field source terms $\mathcal S^n_z$. 

\begin{lemma}\label{estimate of Schi}
	Let $\ga\in(1,3)$ and $\mu\in [\frac12,1)$.
    Then,
	\beqa
		 \int_0^\infty (\pa_y^i\big(  \bar z (1-\chi) \pa_y \mathring z\big))^2  \ dy  \leq C y_0^{3-4\mu-2i} e ^{-(1-\mu)(2i +4\mu -3) (\tau-\tau_0)}, 
	\eeqa
	for each $1\le i\le \ndmu$. 
\end{lemma}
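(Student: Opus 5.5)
The plan is a support-plus-pointwise argument. By Lemma \ref{le: supp chi} (2), the integrand vanishes for $y < y_0 e^{(1-\mu)(\tau-\tau_0)} =: Y_\tau$. Since $y_0 > 1$, this whole region lies in $y\ge 1$, where the large-$y$ bounds apply. So we only need pointwise bounds on
\[
\pa_y^i\big(\bar z(1-\chi)\pa_y\mathring z\big)
\]
for $y \ge Y_\tau$, followed by an integral of a power of $y$ over $[Y_\tau,\infty)$. Note that $\chi$ is supported in $[0,2y_0e^{\tau-\tau_0})$, so the support is actually compact. However, we will not need the upper cutoff; we will simply integrate to infinity.

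\textbf{Step 1: pointwise bound.} Expand by the Leibniz rule,
\[
\pa_y^i\big(\bar z(1-\chi)\pa_y\mathring z\big)=\sum_{a+b+c=i} c_{abc}\,\pa_y^a\bar z\;\pa_y^b(1-\chi)\;\pa_y^{c+1}\mathring z .
\]
For $y\ge y_0$ the factors obey the following bounds:
\begin{itemize}
\item Proposition \ref{prop bar z} (4) gives $|\pa_y^a\bar z|\le A y^{1-\mu-a}$, including $a=0$ via \eqref{z y>>1}.
\item Lemma \ref{le: supp chi} (3)--(4) gives $|1-\chi|\le 1$ and $|\pa_y^b\chi|\le A y^{-b}$ for $b\ge1$.
\item Proposition \ref{prop: mathring z} (\eqref{profile be}) gives $|\pa_y^{c+1}\mathring z|\le A y^{-\mu-c}$.
\end{itemize}
Each term is therefore bounded by $A y^{1-\mu-a-b-\mu-c}$. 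This yields
\[
\big|\pa_y^i\big(\bar z(1-\chi)\pa_y\mathring z\big)\big|\le C y^{1-2\mu-i}\,\mathbbm 1_{\{y\ge Y_\tau\}} .
\]

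\textbf{Step 2: integrate.} Squaring gives the integrand $y^{2-4\mu-2i}$. Its exponent satisfies $2-4\mu-2i<-1$ exactly when $2i+4\mu-3>0$, which holds for $i\ge1$ and $\mu\ge\frac12$. Hence
\[
\int_{Y_\tau}^\infty y^{2-4\mu-2i}\,dy=\frac{Y_\tau^{3-4\mu-2i}}{2i+4\mu-3}.
\]
Substituting $Y_\tau=y_0e^{(1-\mu)(\tau-\tau_0)}$ gives exactly
\[
C\,y_0^{3-4\mu-2i}\,e^{-(1-\mu)(2i+4\mu-3)(\tau-\tau_0)} .
\]

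\textbf{Main obstacle.} The only delicate point is the uniformity of the derivative bounds on $\chi$ and $\mathring z$ in $\tau$. Lemma \ref{le: supp chi} (4) supplies exactly this (constants $A(i)$ independent of $\tau$). The endpoint case $\mu=\frac12$, $i=1$ gives exponent $-3<-1$, so integrability holds throughout the stated range.
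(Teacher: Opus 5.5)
Your argument is correct and is essentially the paper's proof: restrict to the support $y\ge y_0e^{(1-\mu)(\tau-\tau_0)}$ from Lemma \ref{le: supp chi}, bound the integrand pointwise by $Cy^{1-2\mu-i}$ using Proposition \ref{prop bar z}, Lemma \ref{le: supp chi} and Proposition \ref{prop: mathring z}, and integrate. The paper stops at the upper support endpoint $2y_0e^{\tau-\tau_0}$ while you integrate to infinity, which is harmless since $2i+4\mu-3>0$; one trivial slip is that at $\mu=\frac12$, $i=1$ the exponent $2-4\mu-2i$ equals $-2$, not $-3$, which is still $<-1$.
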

\begin{proof}
	From Lemma \ref{le: supp chi} and the definition $\mathring z$ in \eqref{def mathring z}, we observe that
	\[
		\text{supp}\big(\bar z (1-\chi) \pa_y \mathring z\big) \subset [y_0e^{(1-\mu)(\tau-\tau_0)}, 2y_0e^{(\tau-\tau_0)})\ \text{ for }\tau\geq \tau_0.
	\]
	Therefore, by using Proposition \ref{prop bar z} and Proposition  \ref{prop: mathring z}, we have $|\pa_y^i\big(  \bar z (1-\chi) \pa_y \mathring z\big)| \lesssim y^{1-2 \mu - i }$ for $y>y_0$, and in turn 
	\begin{align*}
		 \int_0^\infty (\pa_y^i\big(  \bar z (1-\chi) \pa_y \mathring z\big))^2  \ dy
         \lesssim \int_{y_0e^{(1-\mu)(\tau-\tau_0)}}^{2y_0e^{(\tau-\tau_0)}} y^{2-4\mu-2i } \ dy \leq C y_0^{3-4\mu-2i} e ^{-(1-\mu)(2i +4\mu -3) (\tau-\tau_0)}.
	\end{align*}
\end{proof}

\subsection{Energy estimates}
\subsubsection{High order and singular weighted estimates}

We now proceed with key energy estimates, which will allow us to improve bootstrap assumptions in Proposition \ref{bootstrap lemma}. The following lemma is relevant to the improvement of \eqref{payn z w}. 

\begin{lemma}\label{ode Hn}
	Let $\ga\in(1,3)$ and $\mu\in[\frac12,1)$. Under the assumptions of Proposition  \ref{bootstrap lemma}, there exist $C:=C(\ga,\mu)>0$ and $b>0$ such that
	\beqa\label{Hn ineq}
		 &\frac12\pa_\tau \Big( \norm{\espz}_{\dot H^{\ndmu}}^2+b \norm{\espw}_{\dot H^{\ndmu}}^2 +  C(\norm{\frac{\espz}{y^{\ndmu}}}_{L^2}^2 + b\norm{\frac{\espw}{y^{\ndmu}}}_{L^2}^2) \Big)\\
	&\le   -\frac{1}{2}\Big((1-\mu)\ndmu - \frac32 + \frac{\mu}{2} \Big)\Big( \norm{\espz}_{\dot H^{\ndmu}}^2+ b\norm{\espw}_{\dot H^{\ndmu}} ^2 +C (\norm{\frac{\espz}{y^{\ndmu}}}_{L^2}^2 + b\norm{\frac{\espw}{y^{\ndmu}}}_{L^2}^2\Big) \\
    &+C y_0^{\frac32-2\mu-\ndmu} e ^{-(1-\mu)(\ndmu +2\mu -\frac{3}{2}) (\tau-\tau_0)} e^{-\epone \tau} +Ce^{-\epzero \tau}e^{-\epone \tau}\\
     &+ CB^{1-\frac{1}{2(\ndmu-1)}}e^{-(\mu-\frac12)(1-\frac{1}{2(\ndmu-1)})\eptwo\tau} e^{-(2+\frac{1}{2(\ndmu-1)})\epone \tau}.
	\eeqa
\end{lemma}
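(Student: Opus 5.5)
The plan is to apply the two linear estimates, Lemma \ref{weighted L^2 estimates} and Lemma \ref{linear H^n estimates}, to the full system \eqref{full system Z W}. We view it as \eqref{linear PDE of tilde z w} with forcing
\[
F_1=\mathcal S_z-\nz-\nmz, \qquad F_2=-\nw-\nmw .
\]
The vanishing conditions $\pa_y^jZ(\tau,0)=\pa_y^jW(\tau,0)=0$ for $j\le \ndmu-1$ are propagated by the equations, because the modulation ODEs \eqref{ode of ziwi with z1=0} were defined precisely so that the Taylor coefficients of $\tilde z,\tilde w$ at $y=0$ are carried by $(\mz,\mw)$; equivalently, Lemma \ref{prop Tz Tw /y} shows $\pa_y^i\nmz(\tau,0)=0$. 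Take $b\ge \max\{b_0,b_1\}$. Multiply the weighted estimate by a constant $C$ larger than the constant appearing in front of the weighted norms in Lemma \ref{linear H^n estimates}, and add the two inequalities. Choosing $\eps_0,\eps_1$ small, the linear part becomes $-\frac12((1-\mu)\ndmu-\frac32+\frac\mu2)$ times the combined energy $\mathcal E$ (after absorbing a fraction of the dissipation). It then remains to bound the forcing pairings
\[
|\langle \pa_y^\ndmu F_1,\pa_y^\ndmu Z\rangle|,\qquad |\langle F_1/y^\ndmu,Z/y^\ndmu\rangle|
\]
and their $W$-analogues.

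The forcing terms are handled in three groups. \emph{Source term.} By Cauchy--Schwarz and Lemma \ref{estimate of Schi} with $i=\ndmu$, using the bootstrap bound $\|Z\|_{\dot H^\ndmu}\le 2e^{-\epone\tau}$, this gives $Cy_0^{\frac32-2\mu-\ndmu}e^{-(1-\mu)(\ndmu+2\mu-\frac32)(\tau-\tau_0)}e^{-\epone\tau}$. The weighted piece vanishes near $y=0$ since $\mathcal S_z$ is supported where $y\ge y_0e^{(1-\mu)(\tau-\tau_0)}$, so $y^{-\ndmu}$ is harmless there and yields an even smaller term. \emph{Modulation remainder.} Cauchy--Schwarz with Lemma \ref{prop Tz Tw /y} and the bootstrap bound gives $Ce^{-\epzero\tau}e^{-\epone\tau}$. \emph{Nonlinear terms $\nz,\nw$.} For the pieces that are linear in $(Z,W)$ with coefficients $\mz,\mw,\pa_y\mz$, Proposition \ref{prop payi mz mw} provides a factor $e^{-\epzero\tau}$. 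After Leibniz expansion, Hardy's inequality (Lemma \ref{hardy 1}) controls $\pa_y^i Z/y^{\ndmu-i}$, and these contributions are bounded by $Ce^{-\epzero\tau}\mathcal E$, which is absorbed into the damping for $\tau_0$ large, or else by $Ce^{-\epzero\tau}e^{-\epone\tau}$. The top-order transport term $(\cdots)\pa_y^{\ndmu+1}Z\,\pa_y^\ndmu Z$ is integrated by parts, leaving $\pa_y(\text{coefficient})\,|\pa_y^\ndmu Z|^2$.

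The main obstacle is the genuinely quadratic terms $(Z,W)\,\pa_y Z$ and their derivatives. These are treated as follows.
\begin{enumerate}
\item After the integration by parts above, the commutator terms have the form $\pa_y^{j}Z\,\pa_y^{\ndmu+1-j}Z$ with $1\le j\le\ndmu$.
\item Each is bounded by $\|\pa_y^{j}Z\|_{L^\infty}\|\pa_y^{\ndmu+1-j}Z\|_{L^2}\|Z\|_{\dot H^\ndmu}$.
\item Gagliardo--Nirenberg interpolation between $\|\pa_yZ\|_{L^2}$ and $\|Z\|_{\dot H^\ndmu}$ (Lemma \ref{G-N with weight}) reduces this to the worst product. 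For the $L^\infty$ factor of $\pa_yZ$ it gives
\[
\|\pa_yZ\|_{L^\infty}\lesssim \|\pa_yZ\|_{L^2}^{1-\frac{1}{2(\ndmu-1)}}\|Z\|_{\dot H^\ndmu}^{\frac{1}{2(\ndmu-1)}} .
\]
\item Multiplying by $\|Z\|_{\dot H^\ndmu}^2\le 4e^{-2\epone\tau}$ and inserting the bootstrap bound $\|\pa_yZ\|_{L^2}\le 2Be^{-(\mu-\frac12)\eptwo\tau}$ produces exactly the last term of \eqref{Hn ineq}.
\item Intermediate $j$ are interpolated similarly and give the same or better exponents.
\item The weighted analogue $\langle (Z\pa_yZ)/y^\ndmu, Z/y^\ndmu\rangle$ is handled by writing $Z/y\in L^\infty$ via Hardy plus interpolation, with the same resulting bound.
\end{enumerate}
Collecting all contributions yields \eqref{Hn ineq}. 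The delicate bookkeeping is verifying that each interpolation exponent indeed matches $\frac{1}{2(\ndmu-1)}$ and that no term needs more than $\ndmu$ derivatives of $\mathring z$ near $y=0$; the latter already holds for the linear commutators by Proposition \ref{prop bar z}.
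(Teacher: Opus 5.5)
Your proposal follows essentially the same route as the paper's proof. You combine Lemma \ref{weighted L^2 estimates} and Lemma \ref{linear H^n estimates} with a large weight on the singular-weighted energy. You then bound the source term via Lemma \ref{estimate of Schi}, the modulation remainder via Lemma \ref{prop Tz Tw /y}, the modulation-coefficient terms via Proposition \ref{prop payi mz mw} plus Hardy, and the quadratic terms via integration by parts and $L^\infty$ interpolation, which reproduces the last term of \eqref{Hn ineq} exactly.

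Three small points should be fixed when you write it out:
\begin{itemize}
\item The $L^\infty$ norm must always be placed on the factor with at most $\ndmu-1$ derivatives. For example, when $j=\ndmu$ you need $\|\pa_y\espz\|_{L^\infty}$, not $\|\pa_y^{\ndmu}\espz\|_{L^\infty}$.
\item The interpolation you actually use is Lemma \ref{Linfty decay} together with standard Gagliardo--Nirenberg, not Lemma \ref{G-N with weight}.
\item The weighted source pairing is of the same size as the $\dot H^{\ndmu}$ one, not smaller. You get this either pointwise on the support or via Hardy, as the paper does.
\end{itemize}
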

\begin{proof}
    In the following proof, we will use $C$ to denote a generic positive constant that only depends
on $\ndmu$, $\ga$ and $\mu$. It may vary from line to line.
    From Lemma \ref{weighted L^2 estimates} and Lemma \ref{linear H^n estimates}, it remains to establish  the estimates of $\left| \langle \frac{F_1}{y^\ndmu},\frac{\espz}{y^\ndmu} \rangle\right|$,  $\left| \langle \frac{F_2}{y^\ndmu},\frac{\espw}{y^\ndmu} \rangle\right| $ , $\left|\langle \pa_y^\ndmu F_1, \pa_y^\ndmu \espz \rangle \right|$, and $ \left|\langle \pa_y^\ndmu F_2, \pa_y^\ndmu \espw \rangle \right|$. Here $F_1= \mathcal S_z - \nz - \nmz$ and $F_2= - \nw - \nmw$ where $\mathcal S_z$, $\nz$, $\nmz$, $\nw$ and $\nmw$ are defined in \eqref{nz nmz}. 
    Since the estimation corresponding to $F_2$ is the same as the one to $F_1$, we will only present the estimation of $\left|\langle \pa_y^\ndmu F_1, \pa_y^\ndmu \espz \rangle \right|$ and $\left| \langle \frac{F_1}{y^\ndmu},\frac{\espz}{y^\ndmu} \rangle\right|$. 
    
	\noindent\underline{The source term $\mathcal S_z$}:
    First of all, $\pa_y^j\mathcal S_z(\tau,0)=0$ for any $j\geq 0$ since it has a compact support away from $y=0$. Hence, by Lemma \ref{hardy 1}, 
    \[
        \norm{y^{-\ndmu}\mathcal S_z}_{L^2} \leq C\norm{\pa_y^{\ndmu}\mathcal S_z}_{L^2}.
    \]
    It then follows that
    \[
    \begin{split}
      \Big|  \int_0^\infty \frac{\mathcal S_z}{y^{\ndmu}} \frac{\espz}{y^{\ndmu}} \ dy \Big| + \Big|\int_0^\infty \pa_y^\ndmu\mathcal S_z \pa_y^\ndmu \espz \ dy \Big| 
       &\leq C\norm{\pa_y^\ndmu\mathcal S_z}_{L^2}\norm{\pa_y^\ndmu \espz}_{L^2} \\
       & \leq C {y_0^{\frac32-2\mu-\ndmu}}e ^{-(1-\mu)(\ndmu +2\mu -\frac{3}{2}) (\tau-\tau_0)} e^{-\epone \tau}.
   \end{split}
    \]
    where we have used Young's inequality in the first step, Lemma \ref{estimate of Schi}, and the bootstrap assumption in Proposition \ref{bootstrap lemma} in the last step. 
    
	\noindent\underline{The nonlinear term $\nz$}: 
    For the weighted $L^2$ estimates, we proceed as 
    \begin{align*}
      & \Big| \int_0^\infty \frac{\nz}{y^{\ndmu}} \frac{\espz}{y^{\ndmu}} \ dy \Big| = \Big|\frac{\ga+1}{4}\int_0^\infty(\pa_y\espz+\pa_y\mz) (\frac{\espz}{y^{\ndmu}})^2 \ dy +\frac{3-\ga}{4}\int_0^\infty (\pa_y \espz+\pa_y \mz)\frac{\espz \espw}{y^{2\ndmu}} \ dy\\
        &\qquad\qquad\qquad\qquad\quad + \int_0^1(\frac{\ga+1}{4}\frac{\mz}{y} +\frac{3-\ga}{4}\frac{\mw}{y} ) \frac{\pa_y \espz}{y^{\ndmu-1}}\frac{ \espz}{y^{\ndmu}} \ dy\Big| \\
       & \leq C\Big( \norm{\frac{\mz}{y}}_{L^\infty}+\norm{\frac{\mw}{y}}_{L^\infty}+\norm{\pa_y\mz}_{L^\infty}+\norm{\pa_y\espz}_{L^\infty}\Big) (\norm{\frac{\pa_y \espz}{y^{\ndmu-1}}}_{L^2}^2+\norm{\frac{\espz}{y^{\ndmu}}}_{L^2}^2+\norm{\frac{\espw}{y^{\ndmu}}}_{L^2}^2)\\
       &\leq C\Big(e^{-\epzero \tau}++B^{1-\frac{1}{2(\ndmu-1)}}e^{-(\mu-\frac12)(1-\frac{1}{2(\ndmu-1)})\eptwo\tau} e^{-\frac{1}{2(\ndmu-1)}\epone \tau}\Big) e^{-2\epone \tau}
    \end{align*}
    where we have used Proposition \ref{prop payi mz mw}, Lemma \ref{hardy 1}, Lemma \ref{Linfty decay}, and the bootstrap assumption in Proposition \ref{bootstrap lemma} in the last inequality.
    For the $\dot H^{\ndmu}$ estimates,
	\begin{align*}
	    \int_0^\infty \pa_y^{\ndmu}\nz \pa_y^{\ndmu} \espz dy&= \sum_{i=0}^{\ndmu} c_{\ndmu i} \int_0^\infty (\frac{\ga+1}{4}\pa_y^i\mz+\frac{3-\ga}{4}\pa_y^i\mw+\frac{\ga+1}{4}\pa_y^i\espz+\frac{3-\ga}{4}\pa_y^i\espw)\partial_y^{\ndmu+1-i}\espz\pa_y^{\ndmu} \espz dy\\
        &+\sum_{i=0}^{\ndmu} c_{\ndmu i} \int_0^\infty \pa_y^{\ndmu+1-i}\mz(\frac{\ga+1}{4}\pa_y^i\espz +\frac{3-\ga}{4}\pa_y^i\espw)\pa_y^{\ndmu}\espz \ dy\\
        &=: I+II.
	\end{align*}
	We first obtain an estimate of $I$ by considering three cases, depending on the value of $i$. 
	When $i=0$, by integrating by parts, we obtain
	\begin{align*}
		 &\Big|\int_0^\infty (\frac{\ga+1}{4}\mz+\frac{3-\ga}{4}\mw+\frac{\ga+1}{4}\espz+\frac{3-\ga}{4}\espw)\partial_y^{\ndmu+1}\espz\pa_y^\ndmu \espz dy\Big| \\
		 \leq&\frac12 \int_0^\infty |\frac{\ga+1}{4}\pa_y\mz+\frac{3-\ga}{4}\pa_y\mw+\frac{\ga+1}{4}\pa_y\espz+\frac{3-\ga}{4}\pa_y\espw|\mid\pa_y^\ndmu \espz\mid^2 dy \\
		 \leq & C\Big(\norm{\pa_y\mz}_{L^\infty}+\norm{\pa_y\mw}_{L^\infty} +\norm{\pa_y\espz}_{L^\infty}+\norm{\pa_y\espw}_{L^\infty}\Big) \norm{\pa_y^\ndmu\espz}_{L^2}^2\\
		 \leq &  C\Big(e^{-\epzero \tau}+B^{1-\frac{1}{2(\ndmu-1)}}e^{-(\mu-\frac12)(1-\frac{1}{2(\ndmu-1)})\eptwo\tau} e^{-\frac{1}{2(\ndmu-1)}\epone \tau}\Big) e^{-2\epone \tau}
	\end{align*}
	where we used Proposition \ref{prop payi mz mw}, Lemma \ref{Linfty decay}, and the bootstrap assumption in Proposition \ref{bootstrap lemma} in the last inequality. When $i=\ndmu$, 
    \begin{align*}
        &\Big|\int_0^\infty (\frac{\ga+1}{4}\pa_y^\ndmu\mz+\frac{3-\ga}{4}\pa_y^\ndmu\mw+\frac{\ga+1}{4}\pa_y^\ndmu\espz+\frac{3-\ga}{4}\pa_y^\ndmu\espw)\partial_y\espz\pa_y^{\ndmu} \espz dy\Big|\\
        \leq & C \norm{\pa_y \espz}_{L^\infty}\big(\norm{\pa_y^{\ndmu} \mz}_{L^2}+\norm{\pa_y^{\ndmu} \mw}_{L^2}+\norm{\pa_y^{\ndmu} \espz}_{L^2}+\norm{\pa_y^{\ndmu} \espw}_{L^2} \big) \norm{\pa_y^{\ndmu} \espz}_{L^2}\\
        \leq & C \norm{\pa_y \espz}_{L^2}^{1-\frac{1}{2(\ndmu-1)}}\norm{\pa_y^\ndmu \espz}_{L^2}^{\frac{1}{2(\ndmu-1)}}\big(\norm{\pa_y^{\ndmu} \mz}_{L^2}+\norm{\pa_y^{\ndmu} \mw}_{L^2}+\norm{\pa_y^{\ndmu} \espz}_{L^2}+\norm{\pa_y^{\ndmu} \espw}_{L^2} \big) \norm{\pa_y^{\ndmu} \espz}_{L^2}\\
        \leq & CB^{1-\frac{1}{2(\ndmu-1)}} e^{-(1-\frac{1}{2(\ndmu-1)})(\mu-\frac12)\eptwo \tau} e^{-(2+\frac{1}{2(\ndmu-1)})\epone \tau}
    \end{align*}
    where we have applied Lemma \ref{Linfty decay} in the second inequality, and Proposition \ref{prop payi mz mw}, \eqref{sequence a_1, a_2, a_0}, and the bootstrap assumption in Proposition \ref{bootstrap lemma} in the third inequality. 
	Considering $1\leq i\leq \ndmu-1$, we estimate by using Gagliardo–Nirenberg interpolation inequality and Lemma \ref{Linfty decay}:
	\begin{align}
		&\int_0^\infty |(\pa_y^i\mw+\pa_y^i\espw)\partial_y^{\ndmu+1-i}\espz\pa_y^\ndmu \espz | dy\notag \\
        &\leq C (\norm{\pa_y^i\mw}_{L^\infty}+\norm{\pa_y^i\espw}_{L^\infty}) \norm{\partial_y^{\ndmu+1-i}\espz}_{L^2} \norm{\pa_y^\ndmu \espz}_{L^2}\notag\\
		&\leq C (\norm{\pa_y^i\mw}_{L^\infty}+\norm{\pa_y \espw}_{L^2}^{1-\frac{2i-1}{2(\ndmu-1)}} \norm{\pa_y^\ndmu \espw}_{L^2}^{\frac{2i-1}{2(\ndmu-1)}} )\norm{\pa_y \espz}_{L^2}^{1-\frac{\ndmu-i}{\ndmu-1}} \norm{\pa_y^\ndmu \espz}_{L^2}^{\frac{\ndmu-i}{\ndmu-1}} \norm{\pa_y^\ndmu \espz}_{L^2}\notag\\
        &\leq  CB^{\frac{i-1}{\ndmu-1}} e^{-\frac{i-1}{\ndmu-1}(\mu-\frac12)\eptwo \tau} e^{-(1+\frac{\ndmu-i}{\ndmu-1})\epone \tau} e^{-\epzero \tau}+CB^{1-\frac{1}{2(\ndmu-1)}} e^{-(1-\frac{1}{2(\ndmu-1)})(\mu-\frac12)\eptwo \tau} e^{-(2+\frac{1}{2(\ndmu-1)})\epone \tau}\notag\\
        &\leq CB^{1-\frac{1}{2(\ndmu-1)}} e^{-(1-\frac{1}{2(\ndmu-1)})(\mu-\frac12)\eptwo \tau} e^{-(2+\frac{1}{2(\ndmu-1)})\epone \tau}\label{est 4.28}
        \end{align}
where we have used $1\leq i \leq \ndmu$, \eqref{sequence a_1, a_2, a_0} and the assumption in Proposition \ref{bootstrap lemma} to obtain the last inequality.
	The remaining two terms can be estimated analogously. Combining these estimates, we obtain
	\begin{align*}
		|I| \leq  e^{-\epzero\tau} e^{-2\epone\tau}+ CB^{1-\frac{1}{2(\ndmu-1)}} e^{-(1-\frac{1}{2(\ndmu-1)})(\mu-\frac12)\eptwo \tau} e^{-(2+\frac{1}{2(\ndmu-1)})\epone \tau}.
	\end{align*}
For $II$, when $1\leq i\leq \ndmu$, it can be estimated in the same manner as the first term in \eqref{est 4.28}. For $i=0$, we estimate
\begin{align*}
   \Big| \int_0^\infty\pa_y^{\ndmu+1}\mz (\frac{\ga+1}{4}\espz +\frac{3-\ga}{4}\espw)\pa_y^{\ndmu}\espz \ dy\Big|  
    &\leq \int_0^1 \Big|\pa_y^{\ndmu+1}\mz(\frac{\ga+1}{4}\frac{\espz}{y^{\ndmu}} +\frac{3-\ga}{4}\frac{\espw}{y^{\ndmu}})\pa_y^{\ndmu}\espz \Big| \ dy \\
    &\leq C \norm{\pa_y^{\ndmu+1}\mz}_{L^\infty}(\norm{\pa_y^{\ndmu}\espz}_{L^2}^2+\norm{\pa_y^{\ndmu}\espw}_{L^2}^2)\\
    &\leq C e^{-\epzero\tau} e^{-2\epone\tau}
\end{align*}
where we have used Young's inequality and Lemma \ref{hardy 1} in the second inequality, and the bootstrap assumption in Proposition \ref{bootstrap lemma} to obtain the last inequality. Combining all the estimations above, we derive
\[
     \int_0^\infty \Big| \frac{\nz}{y^{\ndmu}} \frac{\espz}{y^{\ndmu}}\Big| +| \pa_y^{\ndmu}\nz \pa_y^{\ndmu} \espz| \ dy  \leq C   e^{-\epzero\tau} e^{-2\epone\tau}  + CB^{1-\frac{1}{2(\ndmu-1)}} e^{-(1-\frac{1}{2(\ndmu-1)})(\mu-\frac12)\eptwo \tau} e^{-(2+\frac{1}{2(\ndmu-1)})\epone \tau}.
\]

 	\noindent\underline{The nonlinear term $\nmz$:} By using H\"{o}lder inequality, we obtain
	\begin{align*}
		\Big| \int_0^\infty \pa_y^\ndmu\nmz\pa_y^\ndmu \espz dy\Big|+\Big|\int_0^\infty \frac{\nmz}{y^{\ndmu}} \frac{\espz}{y^{\ndmu}} dy \Big|
		\le \norm{\pa_y^\ndmu \nmz}_{L^2} \norm{\espz}_{\dot H^\ndmu}+\norm{\frac{\nmz^0}{y^{\ndmu}}}_{L^2} \norm{\frac{\espz}{y^{\ndmu}}}_{L^2}
        \leq C e^{-\epzero \tau} e^{-\epone \tau}
	\end{align*}
    where we have applied Lemma \ref{prop Tz Tw /y} and the bootstrap assumption in Proposition \ref{bootstrap lemma} in the last step. 
   Combining all the estimations above, choosing $\epsilon_0=\epsilon_1=\frac{1}{2}\big((1-\mu)\ndmu - \frac32 + \frac{\mu}{2} \big)$ in Lemma \ref{weighted L^2 estimates} and Lemma \ref{linear H^n estimates}, and using the fact $e^{-\epzero \tau}e^{-2\epone \tau} \leq e^{-\epzero \tau}e^{-\epone \tau}$, we arrive at \eqref{Hn ineq}.
\end{proof}

\subsubsection{$\dot H^1 $ estimates}
We next present key $\dot H^1$ estimates, which are relevant to the improvement of \eqref{pay z w}. 

\begin{lemma}\label{lemma nonlinear H^1}
	 Let $\ga\in(1,3)$ and $\mu\in[\frac12,1)$. Under the assumptions of Proposition \ref{bootstrap lemma}, there exists a constant $C:=C(\ga,\mu)>0$ such that
	\begin{align*}
    \pa_\tau\Big(\norm{\pa_y\espz}_{L^2}^2+\norm{\pa_y\espw}_{L^2}^2\Big)
			&\leq  -(2\mu-1 ) \Big(\norm{\pa_y \espz}_{L^2}^2 +\norm{\pa_y \espw}_{L^2} ^2\Big) + C   e^{-\epone \tau} + CB^2 e^{-\frac{\mu\epone}{2(\ndmu-1)} \tau }e^{-(2\mu-1)\eptwo \tau} \\
          &+CB y_0^{\frac{-4\mu +1}{2}} 
            e^{\frac{(-4\mu+1)(1-\mu)}{2}(\tau-\tau_0)}e^{-(\mu-\frac12)\eptwo\tau}.
		 \end{align*}
\end{lemma}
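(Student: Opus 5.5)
We differentiate the full system \eqref{full system Z W} once in $y$ and take the $L^2$ inner product of the resulting equations with $(\pa_y\espz,\pa_y\espw)$. No singular weight is needed, and we use the unweighted sum $\norm{\pa_y\espz}^2+\norm{\pa_y\espw}^2$; this is why only the weaker damping rate $2\mu-1$ appears.

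\emph{Linear part.} For $\espz$ the transport term $(y+\frac{\ga+1}{4}\mathring z)\pa_y^2\espz\,\pa_y\espz$ integrates by parts. The boundary term vanishes because $y+\frac{\ga+1}{4}\mathring z=0$ at $y=0$. The resulting coefficient is $-\frac12(1+\frac{\ga+1}{4}\pa_y\mathring z)$, which together with the zeroth order coefficient $(2(1+\frac{\ga+1}{4}\pa_y\mathring z)+\mu-2)$ from differentiation gives the net damping $\frac32(1+\frac{\ga+1}{4}\pa_y\mathring z)+\mu-2$. Using $1+\frac{\ga+1}{4}\pa_y\mathring z\ge 1-\mu$ (Proposition \ref{prop bar z}(2), extended to $\mathring z$ since $\pa_y\chi\,\bar z\ge0$), this is at least $\frac32(1-\mu)+\mu-2=-\frac12+\dots$, which is insufficient by itself. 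So we should instead keep the sharper contribution and balance against $\mu$.

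We plan to follow the paper's convention. The $\dot H^1$ identity for $\espz$ has total coefficient $\frac32(1+\frac{\ga+1}{4}\pa_y\mathring z)+\mu-2\ge \frac{\mu-1}{2}$. The region where this is negative is small $y$, where $\pa_y\mathring z\approx-\frac{4\mu}{\ga+1}$. There we do not use damping; we absorb instead via Hardy: $\norm{\pa_y\espz}_{L^2(0,y_*)}\le y_*^{\ndmu-1}\norm{\espz}_{\dot H^\ndmu}\lesssim e^{-\epone\tau}$. Away from $y_*$ we have $\pa_y\mathring z\ge -C y^{-\mu}$, so the coefficient is at least $\mu-\frac12-\epsilon$ for $y\ge y_*$ large. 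For intermediate $y$ we likewise bound by the high norm through weighted interpolation (Lemma \ref{G-N with weight}) on compact sets. This produces the $Ce^{-\epone\tau}$ and $B^2 e^{-\frac{\mu\epone}{2(\ndmu-1)}\tau}e^{-(2\mu-1)\eptwo\tau}$ terms, the latter coming from interpolating $\norm{\pa_y\espz}_{L^2(\text{compact})}$ between the bootstrap $\dot H^1$ and $\dot H^\ndmu$ bounds. The $\espw$ equation is easier: $1+\frac{3-\ga}{4}\pa_y\mathring z\ge 1-\frac{3-\ga}{\ga+1}\mu$. The coupling $\frac{3-\ga}{4}\pa_y(\pa_y\mathring z\,\espw)$ is handled by Young's inequality, with its lower-order piece $\pa_y^2\mathring z\,\espw$ bounded via Hardy near $0$ and via decay $y^{-1-\mu}$ far away.

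\emph{Forcing.} $\norm{\pa_y\mathcal S_z}_{L^2}$ is controlled by Lemma \ref{estimate of Schi} with $i=1$, giving $y_0^{\frac{1-4\mu}{2}}e^{\frac{(1-4\mu)(1-\mu)}{2}(\tau-\tau_0)}$. Multiplying by $\norm{\pa_y\espz}\le 2Be^{-(\mu-\frac12)\eptwo\tau}$ yields the last term. The modulation terms $\pa_y\nmz,\pa_y\nmw$ are bounded by $Ce^{-\epzero\tau}$ (Lemma \ref{prop Tz Tw /y}), so their contribution is $CBe^{-\epzero\tau}e^{-(\mu-\frac12)\eptwo\tau}\le Ce^{-\epone\tau}$ since $\epone<\epzero$. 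In the nonlinear terms $\pa_y\nz,\pa_y\nw$, the transport pieces integrate by parts to $\norm{\pa_y(\espz,\espw,\mz,\mw)}_{L^\infty}\norm{\pa_y(\espz,\espw)}^2_{L^2}$. We use Lemma \ref{Linfty decay} and the bootstrap bounds for these, and $\norm{\pa_y^2\mz}_{L^\infty}$ handles the remaining products. Each such contribution is bounded by one of the displayed terms.

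\emph{Main obstacle.} The main obstacle is the sign of the damping near $y=0$, where $1+\frac{\ga+1}{4}\pa_y\mathring z\approx1-\mu$ gives a coefficient of size only $\frac{\mu-1}{2}\cdot$. We resolve it by the localization described above: we trade the bad region for the high-order bootstrap decay, and we choose $y_*$ and the splitting so that the good region carries the rate $2\mu-1$.
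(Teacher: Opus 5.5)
Your overall architecture is the right one: differentiate once, use Hardy near $y=0$ against the $\dot H^{\ndmu}$ bootstrap bound, use the decay of $\pa_y\mathring z$ at infinity, Lemma~\ref{estimate of Schi} with $i=1$ for $\mathcal S_z$, and Lemma~\ref{Linfty decay} for the nonlinear terms. The linear part, however, does not close as you set it up.

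\textbf{Where the linear part fails.} The net coefficient is $\frac{3(\ga+1)}{8}\pa_y\mathring z+\mu-\frac12$. At $y=0$ it equals $-\frac{1+\mu}{2}$, not $\frac{\mu-1}{2}$. Since $\pa_y\mathring z=\pa_y\bar z<0$ wherever $\chi=1$, the term $\frac{3(\ga+1)}{8}\pa_y\mathring z$ is anti-damping on the whole bulk of the support, not only near $y=0$; it merely decays like $y^{-\mu}$. For $\mu=\frac12$ the net coefficient is negative wherever $\chi=1$, so there is no ``good region'' at all.

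With a fixed threshold $R$, your far region only yields the rate $\mu-\frac12-CR^{-\mu}$. That leaves a loss $CR^{-\mu}\norm{\pa_y\espz}_{L^2}^2\lesssim R^{-\mu}B^2e^{-(2\mu-1)\eptwo\tau}$ with no extra decay in $\tau$. This is not the inequality claimed: the $B^2$ term there carries the additional factor $e^{-\frac{\mu\epone}{2(\ndmu-1)}\tau}$. At $\mu=\frac12$, which the lemma includes, it is fatal. The damping vanishes, so such a loss makes $\norm{\pa_y\espz}_{L^2}^2$ grow linearly in $\tau$, and \eqref{pay z w} cannot be recovered.

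Your account of the $B^2e^{-\frac{\mu\epone}{2(\ndmu-1)}\tau}$ term is also not what produces it. On a fixed compact set Hardy already gives $C_Re^{-2\epone\tau}$ with no $B$. A $B^2$ term can only come from the region where you invoke the $\dot H^1$ bootstrap bound, i.e.\ the far field. It gains decay only if that region recedes as $\tau$ grows.

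\textbf{The missing idea.} Keep only $\mu-\frac12$ as damping and treat $\frac{3(\ga+1)}{8}\int|\pa_y\mathring z|\,|\pa_y\espz|^2\,dy$ entirely as an error. Split it at the $\tau$-dependent radius $y=e^{\al\tau}$:
\begin{itemize}
\item On $[0,e^{\al\tau}]$, Hardy gives $e^{2(\ndmu-1)\al\tau}\norm{\espz}_{\dot H^{\ndmu}}^2\lesssim e^{(2(\ndmu-1)\al-2\epone)\tau}$.
\item On $[e^{\al\tau},\infty)$, the bound $|\pa_y\mathring z|\le Ay^{-\mu}\le Ae^{-\mu\al\tau}$ gives $e^{-\mu\al\tau}\norm{\pa_y\espz}_{L^2}^2\lesssim B^2e^{-\mu\al\tau}e^{-(2\mu-1)\eptwo\tau}$.
\end{itemize}
The choice $\al=\frac{\epone}{2(\ndmu-1)}$ produces exactly the terms $Ce^{-\epone\tau}$ and $CB^2e^{-\frac{\mu\epone}{2(\ndmu-1)}\tau}e^{-(2\mu-1)\eptwo\tau}$.

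The same growing-radius splitting is needed for the other linear terms:
\begin{itemize}
\item For the cross term $\frac{3-\ga}{4}\pa_y\mathring z\,\pa_y\espw\,\pa_y\espz$, plain Young's inequality with constant weights again costs a fixed fraction of the damping. Bound it instead by $\frac12\int|\pa_y\mathring z|\,(|\pa_y\espz|^2+|\pa_y\espw|^2)\,dy$ and split as above.
\item For $\pa_y^2\mathring z\,\espz\,\pa_y\espz$ and $\pa_y^2\mathring z\,\espw\,\pa_y\espz$, the part on $[0,e^{\al\tau}]$ uses the weighted norms $\norm{\espz/y^{\ndmu}}_{L^2}$, $\norm{\espw/y^{\ndmu}}_{L^2}$. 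The far part uses $y^{-2-2\mu}\le e^{-2\mu\al\tau}y^{-2}$ together with $\norm{\espw/y}_{L^2}\lesssim\norm{\pa_y\espw}_{L^2}$, and likewise for $\espz$.
\end{itemize}
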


\begin{proof} 
From \eqref{full system Z W}, the equations for $(\pa_y Z, \pa_y W)$ read as  
	\beqa\label{full system pay Z W}
	\partial_\tau\pa_y \espz + (y+\frac{\ga+1}{4}\mathring z ) \partial_y^{2} \espz + (\frac{(\ga+1)}{2}\partial_y \mathring z+\mu)\pa_y\espz +\frac{3-\ga}{4}\pa_y(\pa_y \mathring z \espw)
    +\frac{\ga+1}{4}\pa_y^2 \mathring z \espz& \\
+\pa_y\nz
+ \pa_y\nmz& = \pa_y\mathcal S_z,\\
		\partial_\tau \pa_y\espw + (y+\frac{3-\ga}{4}\mathring z ) \partial_y^2 \espw + (\frac{(3-\ga)}{4}\pa_y \mathring z+\mu)\pa_y\espw  + \pa_y\nw 
        + \pa_y\nmw & = 0.
\eeqa
The energy estimates of \eqref{full system pay Z W} lead to 
\begin{align*}
    \frac12\pa_\tau\Big(\norm{\pa_y\espz}_{L^2}^2+  \norm{\pa_y\espw}_{L^2}^2\Big) = I+ II + III
\end{align*}
where
\begin{align*}
	I&= \int_{0}^\infty -\Big(\frac{3(\ga+1)}{8}\partial_y \mathring z+\mu-\frac{1}{2} \Big) \mid \pa_y \espz\mid^2 - \Big(\frac{(3-\ga)}{8}\partial_y\mathring z+\mu-\frac{1}{2} \Big) \mid \pa_y \espw\mid^2 \ dy  \\
	II&=-\int_{0}^\infty \Bigg( \frac{3-\ga}{4}\pa_y\Big(\partial_y \mathring z  \espw\Big) +\frac{\ga+1}{4} \pa_y^2 \mathring z  \espz
    \Big)  \Bigg)\pa_y \espz \ dy \\
	III&=  \int_{0}^\infty \pa_y\mathcal S_z \pa_y Z dy- \int_{0}^\infty \Big( \pa_y\nz 
    + \pa_y\nmz \Big) \pa_y Z dy  - \int_{0}^\infty\Big( \pa_y\nw 
    + \pa_y\nmw\Big) \pa_y W dy.
	\end{align*}
    We present the estimation of $I$, $II$, $III$ in what follows. 
    
    \noindent\underline{The term $I$:} We first observe that  
    \[
    \begin{split}
    I &= - (\mu - \frac12) (\norm{\pa_y\espz}_{L^2}^2+ b \norm{\pa_y\espw}_{L^2}^2 ) - \int_{0}^\infty \frac{3(\ga+1)}{8}\partial_y \mathring z \mid \pa_y \espz\mid^2 +  \frac{(3-\ga)}{8}\partial_y\mathring z  \mid \pa_y \espw \mid^2 \ dy 
    \end{split}
    \] 
    It is sufficient to estimate the last integral and we will focus on the first term as the second one follows analogously. By recalling \eqref{profile be} from Proposition \ref{prop: mathring z},  
    	$$ |\pa_y\mathring z|\leq \begin{cases}
		A,\ \text{for } y\in[0,1],\\
		Ay^{-\mu},\ \text{for } y \ge 1,
	\end{cases}$$
	for some constant $A:=A(\ga,\mu)$,   
   we split the integral into two: 
   \[
  \int_{0}^\infty |\partial_y \mathring z |   \mid \pa_y \espz\mid^2  dy  \le A\int_{0}^{e^{\alpha\tau}}  \mid \pa_y \espz\mid^2  dy  +A \int_{e^{\alpha\tau}}^{\infty}
 y^{-\mu} \mid \pa_y \espz\mid^2   dy =: A (I_1+  I_2)
    \]
   where $\alpha>0$ is small to be determined. For $I_1$, by Hardy inequality (cf. Lemma \ref{hardy 1}),  
    \[
I_1 = \int_{0}^{e^{\al\tau}}y^{2(\ndmu-1)}\mid \frac{\pa_y \espz}{y^{\ndmu-1}}\mid^2 \ dy  
		\lesssim e^{2(\ndmu-1)\al\tau} \norm{\espz}_{\dot H^\ndmu}^2
    \]
 For $I_2$, we bound by 
 \[
 I_2 \le e^{-\mu \alpha \tau} \norm{\pa_y \espz}_{L^2} ^2 
 \]
	Hence, by the bootstrap assumption of Proposition \ref{bootstrap lemma}, we arrive 
	\beqa\label{I-1}
     \int_{0}^\infty\frac{3(\ga+1)}{8}  |\partial_y \mathring z|   \mid \pa_y \espz\mid^2  dy   \le C e^{(2(\ndmu-1)\al-2\epone) \tau}  + C B^2 e^{-(\mu\al+(2\mu-1)\eptwo)\tau} 
	\eeqa and  
   similarly, 
	\beqa\label{I-2}
      \int_{0}^\infty\frac{(3-\ga)}{8}  | \partial_y \mathring z  | \mid \pa_y \espw\mid^2  dy  \le C e^{(2(\ndmu-1)\al-2\epone) \tau}  + C B^2 e^{-(\mu\al+(2\mu-1)\eptwo)\tau}
	\eeqa
     for some constant $C>0$. 
	Therefore, combining \eqref{I-1} and \eqref{I-2}, we have 
	\beqa\label{I}
		I \leq -(\mu-\frac{1}{2} ) \Big(\norm{\pa_y \espz}_{L^2}^2 +\norm{\pa_y \espw}_{L^2} ^2\Big) + C e^{(2(\ndmu-1)\al-2\epone) \tau} + CB^2 e^{-(\mu\al+(2\mu-1)\eptwo)\tau}.
	\eeqa
	
   \noindent \underline{The term $II$:} We estimate $II$ term by term: 
	\begin{align*}
		II 
		&=-\frac{3-\ga}{4} \int_{0}^\infty  \pa_y \mathring z\pa_y \espw \pa_y \espz dy - \frac{3-\ga}{4}\int_{0}^\infty \pa_y^2 \mathring z\espw \pa_y \espz dy- \frac{\ga+1}{4} \int_{0}^\infty \pa_y^2 \mathring z \espz\pa_y \espz dy.
	\end{align*}
    The estimation of the first term is similar to the previous case $I$:   
	\beqa\label{II-2}
		\left| \int_{0}^\infty  \pa_y \mathring z\pa_y \espw \pa_y \espz dy \right|  &\leq \frac12  \int_{0}^\infty | \pa_y \mathring z | \mid\pa_y \espw \mid^2 dy  +\frac12   \int_{0}^\infty | \pa_y \mathring z | \mid\pa_y \espz \mid^2 dy \\
		&\leq C e^{(2(\ndmu-1)\al-2\epone) \tau} + CB^2 e^{-(\mu\al+(2\mu-1)\eptwo)\tau}.
	\eeqa
	For the second term, we apply Holder inequality and obtain
	\begin{align*}
		\left| \int_{0}^\infty \pa_y^2 \mathring z\espw \pa_y \espz dy \right| \leq \Big(\int_{0}^\infty \mid\pa_y^2 \mathring z  \espw\mid^2  dy\Big)^{\frac12} \Big(\int_{0}^\infty\mid\pa_y \espz \mid^2 dy\Big)^{\frac12}
	\end{align*}
	As in the previous case,  
    due to the different behaviors of $\pa_y^2\mathring z$ for $y\ll 1$ and $1\ll y$
	 from Proposition \ref{prop: mathring z}: 
	$$ |\pa_y^2\mathring z|\leq \begin{cases}
		Ay^{\beta-2},\ \text{for } y \in [0,1],\\
		Ay^{-\mu-1},\ \text{for }  y \ge  1,
	\end{cases}$$
	we  separate the domain of integration 
	\begin{align*} 
       &\int_{0}^\infty \mid\pa_y^2 \mathring z\espw\mid^2 dy = \int_0^{1}\mid\pa_y^2 \mathring z\espw\mid^2 dy + \int_{1}^{e^{\al\tau}} \mid\pa_y^2 \mathring z\espw\mid^2 dy +\int_{e^{\al\tau}}^\infty\mid\pa_y^2 \mathring z\espw\mid^2 dy \\
        &\leq A\int_0^{1} y^{2\beta-4} \mid \espw\mid^2 dy +  A\int_{1}^{e^{\al\tau}} y^{-2\mu-2}\mid\espw\mid^2 dy + A\int_{e^{\al\tau}}^\infty y^{-2\mu-2}\mid\espw\mid^2 dy\\
        &\leq A (1+ e^{2(\ndmu-1-\mu)\alpha\tau}) \int_0^{e^{\al\tau}}\mid\frac{\espw}{y^\ndmu}\mid^2 dy + Ae^{-2\mu\al\tau}\int_{e^{\al\tau}}^\infty \mid  \frac{\espw}{y}\mid^2 \ dy
	\end{align*}
   By using 
   the bootstrap assumptions of Proposition  \ref{bootstrap lemma}, 
	\be\label{II-1}
		\left| \int_{0}^\infty \pa_y^2 \mathring z\espw \pa_y \espz dy \right| \leq \Big( C  e^{((\ndmu-1-\mu)\al -\epone )\tau }+C B e^{-\mu\al\tau}e^{-(\mu-\frac12)\eptwo\tau} \Big) B e^{-(\mu-\frac12)\eptwo\tau}.
	\ee
	The third term can be estimated in the same way as in the second term by noting that
    \[
		\left| \int_{0}^\infty \pa_y^2 \mathring z\espz \pa_y \espz dy \right| \leq \Big(\int_{0}^\infty \mid\pa_y^2 \mathring z  \espz \mid^2  dy\Big)^{\frac12} \Big(\int_{0}^\infty\mid\pa_y \espz \mid^2 dy\Big)^{\frac12}
    \]
    We deduce that 
    \be\label{II-3}
		\left| \int_{0}^\infty \pa_y^2 \mathring z\espz \pa_y \espz dy \right| \leq \Big( C  e^{((\ndmu-1-\mu)\al -\epone )\tau }+C B e^{-\mu\al\tau}e^{-(\mu-\frac12)\eptwo\tau} \Big) B e^{-(\mu-\frac12)\eptwo\tau}.
	\ee
 Combining  \eqref{II-2}, \eqref{II-1}, and \eqref{II-3}, 
         we conclude 
		 \beqa\label{II}
		 	II \leq & \  C e^{( 2(\ndmu-1)\al-2\epone) \tau} 
            + C B^2 e^{-(\mu\al + (2\mu-1)\eptwo)\tau}   
		 \eeqa
	for some constant $C>0$. Here we have used the Cauchy-Schwartz inequality to bound the linear-in-$B$ term in \eqref{II-1}, and \eqref{II-3} by the right-hand-side of \eqref{II}. 
    
		\noindent \underline{The term $III$}: The first term of $III$ can be bounded by using H\"{o}lder inequality, Lemma \ref{estimate of Schi} and the bootstrap assumptions of Proposition \ref{bootstrap lemma}:
		 \beqa\label{II-4}
		 	\Big| \int_{0}^\infty \pa_y\mathcal S_z \pa_y Z dy\Big| & \leq \norm{\pa_y\mathcal S_z}_{L^2} \norm{\pa_y Z}_{L^2}
			\leq CB y_0^{\frac{-4\mu +1}{2}}  
            e^{\frac{(-4\mu+1)(1-\mu)}{2}(\tau-\tau_0)}e^{-(\mu-\frac12)\eptwo\tau}.
		 \eeqa
The estimation of remaining two integrals in $III$ is the same. Therefore, we only present the estimation of the first remaining integral involving $\pa_y\nz$ and $\pa_y\nmz$. We first split $\pa_y\nz$ into two parts: 
		 \begin{align*}
         \pa_y\nz &  = \pa_y\Big( (\frac{\ga+1}{4}\mz+\frac{3-\ga}{4}\mw+\frac{\ga+1}{4}\espz+\frac{3-\ga}{4}\espw)\partial_y\espz\Big)  +\pa_y\Big( (\frac{\ga+1}{4}\espz+\frac{3-\ga}{4} \espw)\pa_y\mz\Big)\\
        &  = \nz^{1,1} + \nz^{1,2}
         \end{align*}
    For $\int \nz^{1,1} \pa_y \espz dy $,  by integration by parts, we obtain 
		 \begin{align*}
		 	- \int_0^\infty \nz^{1,1} \pa_y \espz dy 
			&= - \frac12 \int_0^\infty  \pa_y(\frac{\ga+1}{4}\mz+\frac{3-\ga}{4}\mw+\frac{\ga+1}{4}\espz+\frac{3-\ga}{4}\espw)\mid\partial_y\espz \mid^2 dy \\
			&\leq A ( \norm{\pa_y \mz}_{L^\infty} +\norm{\pa_y \mw}_{L^\infty}) \int_0^{1}|\pa_y Z|^2 dy   + A  (  \norm{\pa_y \espz}_{L^\infty} +\norm{\pa_y \espw}_{L^\infty} ) \norm{\pa_y Z}_{L^2}^2.
		 \end{align*}
		 We observe that $\int_0^1|\pa_y Z|^2 dy \lesssim \int_0^1|\frac{\pa_y Z}{y^{\ndmu-1}}|^2 dy \lesssim \norm{\espz}_{\dot H^\ndmu}^2 $ by Hardy inequality. By using Proposition \ref{prop payi mz mw}, Lemma  \ref{Linfty decay} 
         and  the assumptions of Proposition \ref{bootstrap lemma}, we further bound 
		 $$
		 	- \int_0^\infty \nz^{1,1} \pa_y \espz dy \leq C e^{-\epzero \tau} e^{-2\epone \tau} 
            + C B^{3- \frac{1}{2(\ndmu-1)}} e^{\frac{-\epone}{2(\ndmu-1)}\tau} e^{-\left(3-\frac{1}{2(\ndmu-1)}\right)(\mu-\frac12)\eptwo \tau}.
		 $$
		 Next, since $\text{supp}(\mz), \text{supp}(\mw) \in [0,1]$, 
         by using Young's inequality, we bound
		 \begin{align*}
         	& - \int_0^\infty \nz^{1,2} \pa_y \espz dy\\
            &= - \int_0^\infty  (\frac{\ga+1}{4}\espz+\frac{3-\ga}{4} \espw)\pa_y^2\mz  \pa_y \espz dy - \int_0^\infty (\frac{\ga+1}{4}\pa_y \espz+\frac{3-\ga}{4} \pa_y \espw)\pa_y\mz \pa_y \espz dy \\ 
			&\leq (\norm{\pa_y^2\mz}_{L^\infty}+\norm{\pa_y\mz}_{L^\infty})\Big( \int_0^{1} \mid \espz\mid^2+ \mid \espw\mid^2 dy + \int_0^{1}  \mid \pa_y\espw\mid^2 +  \mid \pa_y\espz\mid^2 dy)\\
			&\leq (\norm{\pa_y^2\mz}_{L^\infty}+\norm{\pa_y\mz}_{L^\infty}) 
            \Big( \int_0^{1}  \mid \frac{\espz}{y^\ndmu}\mid^2+ \mid \frac{\espw}{y^\ndmu}\mid^2  dy + \int_0^{1}  \mid \frac{\pa_y\espw}{y^{\ndmu-1}}\mid^2 dy+\int_0^{1}  \mid \frac{\pa_y\espz}{y^{\ndmu-1}}\mid^2 dy)\\
			&\leq (\norm{\pa_y^2\mz}_{L^\infty}+\norm{\pa_y\mz}_{L^\infty}) 
            \Big( \norm{\espw}_{\dot H^\ndmu}^2+\norm{\espz}_{\dot H^\ndmu}^2\Big)
		 \end{align*}
		 where we have used 
         Lemma \ref{hardy 1} in the last step. Hence,
		 $$
		 	- \int_0^\infty \nz^{1,2} \pa_y \espz dy 
            \leq C e^{-\epzero \tau}e^{-2\epone \tau}.
		 $$
		 For the last piece, recalling the expression \eqref{nz nmz} of $\pa_y\nmz$ 
		 \begin{align*}
		 	- \int_0^\infty   \pa_y\nmz \pa_y Z  dy &= - \int_0^1\pa_y\Big(\pa_\tau \mz+  \lz \mz+\frac{3-\ga}{4}\partial_y \mathring z \mw +(\frac{\ga+1}{4}\mz+\frac{3-\ga}{4}\mw)\partial_y\mz\Big) \pa_y Z  dy \\
			&\leq C e^{-\epzero \tau} (\int_0^{1} |\frac{\pa_y Z}{y^{\ndmu-1}}|^2 dy)^\frac12 \le  C e^{-\epzero \tau} \norm{\espz}_{\dot H^\ndmu}  
			\leq  C e^{-(\epzero+ \epone )\tau}  
		 \end{align*}
	where we used Proposition \ref{prop payi mz mw} in the first inequality and Hardy inequality (cf. Lemma \ref{hardy 1}) 
    in the second inequality. Hence,  
	\beqa\label{III}
	 III&\leq CB y_0^{\frac{-4\mu +1}{2}} 
            e^{\frac{(-4\mu+1)(1-\mu)}{2}(\tau-\tau_0)}e^{-(\mu-\frac12)\eptwo\tau}+ C e^{-(\epzero+\epone) \tau}\\ 
            &\quad + C B^{3-\frac{1}{2(\ndmu-1)}} e^{\frac{-\epone}{2(\ndmu-1)}\tau} e^{-\left(3-\frac{1}{2(\ndmu-1)}\right)(\mu-\frac12)\eptwo \tau}.
	\eeqa
		 
         Putting the estimates together from \eqref{I}, \eqref{II} and \eqref{III}, and choosing $\al = \frac{\epone}{2(\ndmu-1)}$ and  with $\epone<\epzero$, we arrive at 
		 \beqa\notag 
\frac12&\pa_\tau \Big(\norm{\pa_y\espz}_{L^2}^2+\norm{\pa_y\espw}_{L^2}^2\Big)\\ 
			\leq & -(\mu-\frac{1}{2} ) \Big(\norm{\pa_y \espz}_{L^2}^2 +\norm{\pa_y \espw}_{L^2} ^2\Big)  + C e^{(2(\ndmu-1)\al-2\epone) \tau} + CB^2 e^{-\mu\al \tau }e^{-(2\mu-1)\eptwo \tau} \\ 
			&
           +CB y_0^{\frac{-4\mu +1}{2}} 
            e^{\frac{(-4\mu+1)(1-\mu)}{2}(\tau-\tau_0)}e^{-(\mu-\frac12)\eptwo\tau} + C e^{-(\epzero+\epone )\tau}
            + C B^{3-\frac{1}{2(\ndmu-1)}} e^{\frac{-\epone}{2(\ndmu-1)}\tau} e^{-\left(3-\frac{1}{2(\ndmu-1)}\right)(\mu-\frac12)\eptwo \tau}  \\ 
          \leq & -(\mu-\frac{1}{2} ) \Big(\norm{\pa_y \espz}_{L^2}^2 +\norm{\pa_y \espw}_{L^2} ^2\Big)  + C   e^{-\epone \tau} + CB^2 e^{-\mu\al \tau }e^{-(2\mu-1)\eptwo \tau} \\
          &+CB y_0^{\frac{-4\mu +1}{2}} 
            e^{\frac{(-4\mu+1)(1-\mu)}{2}(\tau-\tau_0)}e^{-(\mu-\frac12)\eptwo\tau}
		 \eeqa
        At the second inequality we used   $e^{-(\epzero+\epone )\tau} \le e^{(2(\ndmu-1)\al-2\epone) \tau}  =  e^{-\epone \tau}$ and   $e^{\frac{-\epone}{2(n-1)}\tau} e^{-\left(3-\frac{1}{2(n-1)}\right)(\mu-\frac12)\eptwo \tau}  \le e^{-\mu\al \tau }e^{-(2\mu-1)\eptwo \tau} $. 
		 \end{proof}

\subsubsection{$L^2$ estimates}

Final $L^2$ energy estimates provide the growth control of the $L^2$ norm in $\tau$, which will be crucial for the improvement of \eqref{L2}. 

\begin{lemma}\label{lemma L2}
    Let $\ga\in(1,3)$ and $\mu\in[\frac{1}{2},1)$.  Under the assumptions of Proposition \ref{bootstrap lemma}, we have
    \begin{align}
        \pa_\tau\Big(\norm{\espz}_{L^2}^2+\norm{\espw}_{L^2}^2\Big)\leq &(3-2\mu) (\norm{\espz}_{L^2}^2 +  \norm{\espw}_{L^2}^2) +CB^2 e^{-\frac{\mu\epone}{2\ndmu-\mu}\tau} e^{(3-2\mu)\tau}+CBe^{(\frac{3}{2}-\mu)\tau}(G[\mu])^{\frac12}\notag \\
            &+B^{1-\frac{1}{2(\ndmu-1)}} e^{-(1-\frac{1}{2(\ndmu-1)})(\mu-\frac12)\eptwo \tau}e^{-\frac{1}{2(\ndmu-1)}\epone\tau}B^2 e^{(3-2\mu)\tau} \label{l2 main}
    \end{align}
    where $G[\mu]$ is defined in \eqref{Fmu}.
\end{lemma}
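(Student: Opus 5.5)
We take the $L^2$ inner product of \eqref{full system Z W} with $(\espz,\espw)$, integrate by parts the transport terms, and then bound the sources and nonlinear terms by the bootstrap assumptions of Proposition \ref{bootstrap lemma}. The result has the linear growth rate $3-2\mu$ plus forcing terms; this growth will later be absorbed by the bootstrap weight $e^{(\frac32-\mu)\tau}$.

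\textbf{Linear part.} Integrating by parts, the transport term contributes
\[
\int \big(-(y+\tfrac{\ga+1}{4}\mathring z)\pa_y\espz-(\tfrac{\ga+1}{4}\pa_y\mathring z+\mu-1)\espz\big)\espz\,dy
=\int\big(\tfrac12-\mu+\tfrac{\ga+1}{8}\pa_y\mathring z\big)|\espz|^2\,dy .
\]
Since $\pa_y\mathring z\le \pa_y\chi\,\bar z$, which is small for $y_0$ large, and $-\pa_y\mathring z\le \frac{4\mu}{\ga+1}$, the coefficient is bounded by $\frac32-\mu$ up to harmless errors. This gives the rate $(3-2\mu)$ after doubling. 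There is no boundary term at $y=0$, since $\espz$ vanishes there, and none at infinity by decay. The coupling term $\frac{3-\ga}{4}\pa_y\mathring z\,\espw\espz$ is split as in the proof of Lemma \ref{lemma nonlinear H^1}, into $y\le e^{\al\tau}$ and $y\ge e^{\al\tau}$. On $[0,e^{\al\tau}]$ we use Hardy, Lemma \ref{hardy 1}, with weight $y^{\ndmu}$ and the interpolation Lemma \ref{G-N with weight}, against the $e^{-\epone\tau}$ bounds. On $[e^{\al\tau},\infty)$ we use $|\pa_y\mathring z|\le Ay^{-\mu}\le Ae^{-\mu\al\tau}$ together with the $L^2$ bootstrap $\|(\espz,\espw)\|_{L^2}\le 2Be^{(\frac32-\mu)\tau}$. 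Choosing $\al$ of order $\epone/(2\ndmu-\mu)$ balances the two regions and produces the term $CB^2e^{-\frac{\mu\epone}{2\ndmu-\mu}\tau}e^{(3-2\mu)\tau}$. The same splitting also handles the part of the diagonal term $\pa_y\mathring z|\espz|^2$ that is not simply dropped.

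\textbf{Source terms.} $\mathcal S_z$, $\nmz$ and $\nmw$ enter linearly. By Cauchy--Schwarz, $|\langle \mathcal S_z,\espz\rangle|\le \|\mathcal S_z\|_{L^2}\|\espz\|_{L^2}$, and similarly for the modulation terms. These norms are computed as in Lemma \ref{estimate of Schi}, now at $i=0$: since $|\bar z(1-\chi)\pa_y\mathring z|\lesssim y^{1-2\mu}$ on $[y_0e^{(1-\mu)(\tau-\tau_0)},2y_0e^{\tau-\tau_0}]$, the square integral is a $\mu$-dependent quantity. We expect this quantity to be $G[\mu]$ of \eqref{Fmu}; it may grow when $4\mu<3$ because of the upper endpoint $2y_0e^{\tau-\tau_0}$. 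The $\nmz$ and $\nmw$ contributions are $Ce^{-\epzero\tau}$ by Lemma \ref{prop Tz Tw /y}, and we fold them into $G[\mu]$ as well. Together these give $CBe^{(\frac32-\mu)\tau}G[\mu]^{1/2}$.

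\textbf{Nonlinear terms.} The transport-type parts of $\nz,\nw$, namely $(\ldots)\pa_y\espz\,\espz$, become $-\frac12\int\pa_y(\ldots)|\espz|^2$ after integration by parts. The remaining parts, $(\ldots)\pa_y\mz$, are bounded in the same way. Each is controlled by $\big(\|\pa_y\mz\|_{L^\infty}+\|\pa_y\mw\|_{L^\infty}+\|\pa_y\espz\|_{L^\infty}+\|\pa_y\espw\|_{L^\infty}\big)\|(\espz,\espw)\|_{L^2}^2$. Lemma \ref{Linfty decay} gives $\|\pa_y\espz\|_{L^\infty}\lesssim\|\pa_y\espz\|_{L^2}^{1-\frac1{2(\ndmu-1)}}\|\espz\|_{\dot H^\ndmu}^{\frac1{2(\ndmu-1)}}$, and inserting the bootstraps yields the last term of \eqref{l2 main}. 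The $\mz,\mw$ pieces decay like $e^{-\epzero\tau}$ and are absorbed. The main obstacle is the linear coupling term in the intermediate region $1\lesssim y\lesssim e^{\al\tau}$. There only the singular weighted norm decays, and its weight $y^{\ndmu}$ loses a factor $e^{\ndmu\al\tau}$. The choice of $\al$ must therefore be tuned so that both halves of the split beat $e^{(3-2\mu)\tau}$ by the stated rate.
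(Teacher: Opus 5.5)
Your outline follows the paper's route: the energy identity; the split $[0,1]\cup[1,e^{\al\tau}]\cup[e^{\al\tau},\infty)$ with $\al=\frac{\epone}{2\ndmu-\mu}$, played against $\norm{\espz/y^{\ndmu}}_{L^2}\le 2e^{-\epone\tau}$ and the $L^2$ bootstrap; Cauchy--Schwarz for $\mathcal S_z$, which produces $G[\mu]$; and integration by parts plus Lemma \ref{Linfty decay} for $\nz,\nw$. However, the computation that fixes the growth rate is wrong, and as written that step fails. The correct identity is
\[
-\int_0^\infty \espz\,\lz\espz\,dy=\int_0^\infty\Big(\tfrac32-\mu-\tfrac{\ga+1}{8}\pa_y\mathring z\Big)|\espz|^2\,dy ,
\]
not $\int(\frac12-\mu+\frac{\ga+1}{8}\pa_y\mathring z)|\espz|^2$. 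On $[0,y_0]$ we have $\pa_y\mathring z=\pa_y\bar z<0$, with $\pa_y\mathring z(0)=-\frac{4\mu}{\ga+1}$. So the term $-\frac{\ga+1}{8}\pa_y\mathring z\,|\espz|^2$ is positive and as large as $\frac{\mu}{2}|\espz|^2$ near the origin; it is neither small nor droppable. The bound $\pa_y\mathring z\le\pa_y\chi\,\bar z$ is the right tool for the $\espw$ equation, whose coefficient is $\frac32-\mu+\frac{3-\ga}{8}\pa_y\mathring z$, not for $\espz$. If you bound the $\espz$ coefficient pointwise you only get the rate $3-\mu$, and then \eqref{L2} cannot be improved. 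The missing point is that this diagonal term must go through the same three-region split as the coupling term. The paper bounds $\int|\pa_y\mathring z|\,(|\espz|^2+|\espw|^2+|\espz\espw|)\,dy$ all at once, so the $O(1)$ contribution near the origin is paid for by the decaying weighted norm, leaving only $e^{-\epone\tau}+B^2e^{-\mu\al\tau}e^{(3-2\mu)\tau}$. Your remark about ``the part not simply dropped'' points in this direction, but no part of the $\espz$ diagonal term can be dropped: the rate $3-2\mu$ exists only after the split.

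A smaller issue concerns the $\nmz,\nmw$ contributions $CBe^{-\epzero\tau}e^{(\frac32-\mu)\tau}$, which should not be folded into $CBe^{(\frac32-\mu)\tau}G[\mu]^{1/2}$. For $\mu\in(\frac34,1)$, $G[\mu]$ carries the factor $y_0^{3-4\mu}=e^{(3-4\mu)\tau_0}$ and decays in $\tau$. Hence $e^{-\epzero\tau}\lesssim G[\mu]^{1/2}$, with a constant independent of $\tau_0$, fails whenever $\epzero<\frac{4\mu-3}{2}$, which can happen. Absorb these contributions instead into $CB^2e^{-\frac{\mu\epone}{2\ndmu-\mu}\tau}e^{(3-2\mu)\tau}$, using $\epone<\epzero$ and $\frac32-\mu\le 3-2\mu$, as the paper does. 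Finally, Hardy and Lemma \ref{G-N with weight} are not needed on $[0,e^{\al\tau}]$. There $|\espz|^2\le|\espz/y^{\ndmu}|^2$ on $[0,1]$ and $y^{-\mu}|\espz|^2\le e^{(2\ndmu-\mu)\al\tau}|\espz/y^{\ndmu}|^2$ on $[1,e^{\al\tau}]$, which is all the paper uses.
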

\begin{proof}
     By following a similar computation step 
     as in Lemma \ref{weighted L^2 estimates}, we obtain
     \[
     \begin{split}
\frac12\pa_\tau(\norm{\espz}_{L^2}^2 +  \norm{\espw}_{L^2}^2) \leq & (\frac32-\mu) (\norm{\espz}_{L^2}^2 +  \norm{\espw}_{L^2}^2)+ \int_0^\infty    |\pa_y \mathring z| |\espz|^2+|\pa_y \mathring z | |\espw|^2 +| \pa_y\mathring z| \espw\espz\ dy \\
&+\left| \langle F_1,\espz \rangle\right|+\left| \langle F_2,\espw \rangle\right|.
    \end{split}
     \]
     We first estimate the quadratic term on the first line. By recalling Proposition \ref{prop: mathring z}
     $$ |\pa_y\mathring z|\leq \begin{cases}
		A,\ \text{for } y\in[0,1],\\
		Ay^{-\mu},\ \text{for } y \ge 1,
	\end{cases}$$
    We split the integral domains into three pieces:
    \beqa\label{L2 linear}
        \int_0^\infty |\pa_y\mathring z| \espz^2 \ dy &= \int_0^1 |\pa_y\mathring z| \espz^2 \ dy+ \int_1^{e^{\al\tau}}|\pa_y\mathring z |\espz^2 \ dy +\int_{e^{\al\tau}}^\infty |\pa_y\mathring z |\espz^2 \ dy \\
        &\lesssim \int_0^1  \espz^2 \ dy+ \int_1^{e^{\al\tau}} y^{-\mu}\espz^2 \ dy +\int_{e^{\al\tau}}^\infty  y^{-\mu} \espz^2 \ dy \\
        &\lesssim (1+e^{(2\ndmu-\mu)\al\tau})\int_0^{e^{\al\tau}}  \left|\frac{\espz}{y^\ndmu}\right|^2 \ dy + e^{-\mu\al\tau} \norm{\espz}_{L^2}^2 \\
        &\lesssim e^{(2\ndmu-\mu)\al\tau-2\epone \tau}+B^2 e^{-\mu\al\tau} e^{(3-2\mu)\tau}.
    \eeqa
    where $\al$ will be determined later and we have used the assumptions of Proposition \ref{bootstrap lemma} in the last inequality.
        The remaining two terms are estimated in a similar manner, with the cross terms treated first by applying Young’s inequality. We now proceed to estimate the terms regarding $F_1$ and $F_2$. Since the estimate for $F_2$ follows in an analogous manner to that of $F_1$, we only present the detailed estimate for $F_1$. Recall the expression 
           $ F_1 = \mathcal S_z -\nz - \nmz$ 
        where $\mathcal S_z$, $\nz$ and $\nmz$ are defined in \eqref{nz nmz}. We estimate term by term.

        \noindent\underline{The source term $\mathcal S_z$}: By using Holder inequality,  
        \begin{align*}
          \Big|  \int_0^\infty \bar z (1-\chi) \pa_y \mathring z \espz \ dy \Big| \lesssim \int_{y_0e^{(1-\mu)(\tau-\tau_0)}}^{2y_0e^{\tau-\tau_0}}y^{1-2\mu} |Z| \ dy &\lesssim \norm{\espz}_{L^2} \big(\int_{y_0e^{(1-\mu)(\tau-\tau_0)}}^{2y_0e^{\tau-\tau_0}} y^{2-4\mu} \ dy\big)^{\frac12}.
        \end{align*}
        Now, depending on $\mu$, the integral of $y^{2-4\mu}$ is solved differently. 
        \beqa\label{Fmu}
            \int_{y_0e^{(1-\mu)(\tau-\tau_0)}}^{2y_0e^{\tau-\tau_0}} y^{2-4\mu} \ dy \leq \begin{cases}
                Cy_0^{3-4\mu} e^{(3-4\mu)(\tau-\tau_0)}, \ \text{ when }\mu\in[\frac12,\frac34),\\
                \ln(2y_0e^{\tau-\tau_0}),  \ \text{ when }\mu=\frac34,\\
                Cy_0^{3-4\mu}e^{(3-4\mu)(1-\mu)(\tau-\tau_0)}, \ \text{ when }\mu\in(\frac34,1).
            \end{cases}
        \eeqa
        For notational convenience, we denote the right-hand side of \eqref{Fmu} by $G[\mu]$. Hence, by using the assumptions of Proposition \ref{bootstrap lemma}, we demonstrate
        \be\label{L2 Sz}
            \Big| \int_0^\infty \bar z (1-\chi) \pa_y \mathring z \espz \ dy \Big| \leq CBe^{(\frac{3}{2}-\mu)\tau} (G[\mu])^{\frac12}.
        \ee
        
       \noindent \underline{The nonlinear term $\nz$}:
        We first do the integration by parts, 
        \begin{align*}
           & \int_0^\infty  (\frac{\ga+1}{4}\mz+\frac{3-\ga}{4}\mw+\frac{\ga+1}{4}\espz+\frac{3-\ga}{4}\espw)\partial_y\espz \espz+(\frac{\ga+1}{4}\espz+\frac{3-\ga}{4} \espw)\pa_y\mz \espz \ dy \\
          =&-\frac12\int_0^\infty  (\frac{\ga+1}{4}\pa_y\mz+\frac{3-\ga}{4}\pa_y\mw+\frac{\ga+1}{4}\pa_y\espz+\frac{3-\ga}{4}\pa_y\espw)\left| \espz\right|^2+(\frac{\ga+1}{4}\espz+\frac{3-\ga}{4} \espw)\pa_y\mz \espz \ dy \\
          \lesssim & \Big(\norm{\pa_y\mz}_{L^\infty}+\norm{\pa_y\mw}_{L^\infty}+\norm{\pa_y\espz}_{L^\infty}+\norm{\pa_y\espw}_{L^\infty}\Big)(\norm{\espz}_{L^2}^2 +\norm{\espw}_{L^2}^2 )
        \end{align*}
        where we have used H\"{o}lder inequality in the last step. Next, we apply the assumptions of Proposition \ref{bootstrap lemma}, Lemma \ref{prop payi mz mw} , and Lemma \ref{Linfty decay} to obtain
        \be\label{L2 Nz}
            \Big| \int_0^\infty \nz \espz \ dy \Big| \lesssim (e^{-\epzero \tau}+B^{1-\frac{1}{2(\ndmu-1)}} e^{-(1-\frac{1}{2(\ndmu-1)})(\mu-\frac12)\eptwo \tau}e^{\frac{-1}{2(\ndmu-1)}\epone\tau})B^2 e^{(3-2\mu)\tau}.
        \ee
        
        \noindent\underline{The nonlinear term $\nmz$}:
        By using H\"{o}lder inequality:
        \be\label{l2 Tz}
          \Big|  \int_0^\infty \nmz \espz \ dy\Big|  \lesssim \norm{\nmz}_{L^2} \norm{\espz}_{L^2} \lesssim B e^{-\epzero \tau}e^{(\frac{3}{2}-\mu)\tau} 
        \ee
        where Lemma \ref{prop Tz Tw /y} and the assumptions of Proposition \ref{bootstrap lemma} have been used. Combing \eqref{L2 linear}, \eqref{L2 Sz}, \eqref{L2 Nz}, \eqref{l2 Tz}, and choosing $\al=\frac{\epone}{2\ndmu-\mu}$, we derive
        \begin{align*}
            \frac12\pa_\tau(\norm{\espz}_{L^2}^2 +  \norm{\espw}_{L^2}^2) \leq & (\frac32-\mu) (\norm{\espz}_{L^2}^2 +  \norm{\espw}_{L^2}^2) +Ce^{-\epone \tau}+CB^2 e^{-\frac{\mu\epone}{2\ndmu-\mu}\tau} e^{(3-2\mu)\tau}\\
            &+CBe^{(\frac{3}{2}-\mu)\tau} (G[\mu])^{\frac12}+B e^{-\epzero \tau}e^{(\frac{3}{2}-\mu)\tau}\\
            &+C(e^{-\epzero \tau}+B^{1-\frac{1}{2(\ndmu-1)}} e^{-(1-\frac{1}{2(\ndmu-1)})(\mu-\frac12)\eptwo \tau}e^{-\frac{1}{2(\ndmu-1)}\epone\tau})B^2 e^{(3-2\mu)\tau}\\
            \leq & (\frac32-\mu) (\norm{\espz}_{L^2}^2 +  \norm{\espw}_{L^2}^2) +CB^2 e^{-\frac{\mu\epone}{2\ndmu-\mu}\tau} e^{(3-2\mu)\tau}+CBe^{(\frac{3}{2}-\mu)\tau}(G[\mu])^{\frac12} \\
            &+B^{1-\frac{1}{2(\ndmu-1)}} e^{-(1-\frac{1}{2(\ndmu-1)})(\mu-\frac12)\eptwo \tau}e^{-\frac{1}{2(\ndmu-1)}\epone\tau}B^2 e^{(3-2\mu)\tau}
        \end{align*}
        where $C$ is a constant. At the last inequality, we have used $e^{-\epzero\tau}<e^{-\epone\tau}\leq e^{-\frac{\mu\epone}{2\ndmu-\mu}\tau}$. 
\end{proof}

We are now ready to prove Proposition  \ref{bootstrap lemma}. 

\subsection{Proof of Proposition  \ref{bootstrap lemma}}
        We begin with \eqref{pay z w}. 
        
   \noindent \underline{Proof of \eqref{pay z w}}: From Lemma \ref{lemma nonlinear H^1}, we have
    \begin{align*}
    \pa_\tau\Big(e^{(2\mu-1)\tau}(\norm{\pa_y\espz}_{L^2}^2+\norm{\pa_y\espw}_{L^2}^2)\Big)\leq &   C   e^{(2\mu-1-\epone) \tau} + CB^2 e^{-\frac{\epone \mu}{2(\ndmu-1)} \tau }e^{(2\mu-1)(1-\eptwo) \tau} \\
          &+CB y_0^{\frac{-4\mu +1}{2}} 
            e^{\frac{(-4\mu+1)(1-\mu)}{2}(\tau-\tau_0)}e^{(\mu-\frac12)(2-\eptwo)\tau}.
    \end{align*}
    Integrating the above ODE inequality over $[\tau_0, \tau]$, we derive
    \begin{align*}
        &\norm{(\pa_y\espz(\tau,\cdot), \pa_y \espw(\tau,\cdot))}_{L^2}^2 \\
        &\leq e^{-(2\mu-1)(\tau-\tau_0)} \norm{(\pa_y\espz(\tau_0,\cdot), \pa_y \espw(\tau_0,\cdot))}_{L^2}^2 
         + C  e^{-(2\mu-1)\tau}\int_{\tau_0}^{\tau} e^{(2\mu-1-\epone) \tau'} \ d\tau'\\
        &\quad + CB^2 e^{-(2\mu-1)\tau}\int_{\tau_0}^{\tau} e^{-\frac{\epone \mu}{2(\ndmu-1)} \tau' }e^{(2\mu-1)(1-\eptwo) \tau'} d\tau'\\
        &\quad + CB y_0^{\frac{-4\mu +1}{2}}e^{\frac{-(-4\mu+1)(1-\mu)}{2}\tau_0} e^{-(2\mu-1)\tau}\int_{\tau_0}^{\tau}
            e^{\frac{(-4\mu+1)(1-\mu)}{2}\tau'}e^{(\mu-\frac12)(2-\eptwo)\tau'}d\tau'\\
        &=: I^1+II^1+III^1+IV^1.
    \end{align*}
We analyze the terms sequentially, following the order of choosing $\tau_0$, $\sigma_0$, and $y_0$. We first analyze $III^1$. We observe that the sign of $-\frac{\epone \mu}{2(\ndmu-1)} +(2\mu-1)(1-\eptwo)$ is indeterminate. To verify it, we recall the definition \eqref{def ndmu} of $\ndmu$. In particular, we note that $0<\frac{\epone \mu}{2(\ndmu-1)} \to 0$ as $\mu\to 1$, while $\eptwo$ is small. Consequently, the sign of the above expression may vary with the value of $\mu$, and it is thus natural to distinguish the following two cases: when $-\frac{\epone \mu}{2(\ndmu-1)} +(2\mu-1)(1-\eptwo)\neq 0$,
\begin{align*}
    III^1 &= 
        B^2e^{-(2\mu-1)\eptwo \tau}\frac{ C e^{-\frac{\epone \mu}{2(\ndmu-1)}\tau_0 }}{-\frac{\epone \mu}{2(\ndmu-1)} +(2\mu-1)(1-\eptwo)}\Big(  e^{-\frac{\epone \mu}{2(\ndmu-1)}(\tau-\tau_0) }-e^{-(2\mu-1)(1-\eptwo)(\tau- \tau_0)}\Big)\\
        &\leq  B^2e^{-(2\mu-1)\eptwo \tau}\frac{ C e^{-\frac{\epone \mu}{2(\ndmu-1)}\tau_0 }}{\left|-\frac{\epone \mu}{2(\ndmu-1)} +(2\mu-1)(1-\eptwo)\right|},\ \text{ for any }\tau\in[\tau_0,\tau_1],
\end{align*}
where we have used $\frac{\epone \mu}{2(\ndmu-1)}>0$ and $(2\mu-1)(1-\eptwo)\geq 0$ in the last step. And, when $-\frac{\epone \mu}{2(\ndmu-1)} +(\mu-\frac12)(2-\eptwo)= 0$, we have $(2\mu-1)(1-\eptwo)>0$ and
$$
    III^1 =e^{-(2\mu-1)\eptwo \tau} CB^2 e^{-(2\mu-1)(1-\eptwo)\tau}(\tau-\tau_0)\leq  B^2e^{-(2\mu-1)\eptwo\tau} \frac{C e^{-(2\mu-1)(1-\eptwo)\tau_0}}{e(2\mu-1)(1-\eptwo)}
$$
where we have used the fact that $e^{-ax}x\leq \frac{1}{ae}$ with $a>0$ for any $x\geq 0$.
We then choose $\tau_0$ sufficiently large such that 
        \beqa\label{B}
       \frac{ C e^{-\frac{\epone \mu}{2(\ndmu-1)}\tau_0 }}{\left|-\frac{\epone \mu}{2(\ndmu-1)} +(2\mu-1)(1-\eptwo)\right|} \leq \frac{1}{4}, \ \text{when }\left|-\frac{\epone \mu}{2(\ndmu-1)} +(\mu-\frac12)(2-\eptwo)\right| \neq 0,\\
       \frac{C e^{-(2\mu-1)(1-\eptwo)\tau_0}}{e(2\mu-1)(1-\eptwo)} \leq \frac{1}{4}, \ \text{when }\left|-\frac{\epone \mu}{2(\ndmu-1)} +(\mu-\frac12)(2-\eptwo)\right| = 0,
    \eeqa
   to obtain
    \be\label{H1 III}
        III^1 \leq \frac{1}{4}B^2e^{-(2\mu-1)\eptwo \tau}.
    \ee
    We next analyze $II^1$. Similarly, $2\mu-1\in[0,1]$ and $\epone$ is small. The sign of $2\mu-1-\epone$ is indeterminate as well. Hence, we split it into two cases: when $2\mu-1-\epone \neq 0$,
    \begin{align*}
       II^1&= e^{-(2\mu-1)\eptwo\tau}\frac{C}{2\mu-1-\epone}\Big( e^{-(\epone-(2\mu-1)\eptwo) \tau}- e^{-(2\mu-1)(1-\eptwo)\tau}e^{(2\mu-1-\epone)\tau_0}\Big)\\
       &\leq e^{-(2\mu-1)\eptwo\tau}\frac{C}{\left|2\mu-1-\epone\right|}\Big( e^{-(\epone-(2\mu-1)\eptwo) \tau}+ e^{-(2\mu-1)(1-\eptwo)\tau}e^{-\left| 2\mu-1-\epone\right|\tau_0}\Big).
    \end{align*}
When $2\mu-1-\epone=0$, then $(2\mu-1)(1-\eptwo)\neq 0$ due to \eqref{sequence a_1, a_2, a_0}, and
$$
    II^1= e^{-(2\mu-1)\eptwo\tau} C e^{-(2\mu-1)(1-\eptwo)\tau} (\tau-\tau_0) \leq e^{-(2\mu-1)\eptwo\tau}\frac{Ce^{-1-(2\mu-1)(1-\eptwo)\tau_0}}{(2\mu-1)(1-\eptwo)}
$$
where we also used $e^{-ax}x\leq \frac{1}{ae}$ with $a>0$ for any $x\geq 0$ in the last inequality. Now, by choosing $\tau_0$ sufficient large such that
    \beqa\label{tau0}
        \frac{C}{\left|2\mu-1-\epone\right|}\Big( e^{-(\epone-(2\mu-1)\eptwo) \tau}+ e^{-(2\mu-1)(1-\eptwo)\tau}e^{-\left| 2\mu-1-\epone\right|\tau_0}\Big)\leq\frac{1}{4}B^2,\ \text{when }2\mu-1-\epone\neq 0,\\
        \frac{Ce^{-1-(2\mu-1)(1-\eptwo)\tau_0}}{(2\mu-1)(1-\eptwo)}\leq \frac{1}{4}B^2,\ \text{when }2\mu-1-\epone= 0,
    \eeqa
we have 
\be\label{H1 II}
    II^1\leq \frac{1}{4}B^2e^{-(2\mu-1)\eptwo \tau}.
\ee
For $I^1$, by the initial condition $\norm{(\espz(\tau_0,\cdot), \espw(\tau_0,\cdot))}_{H^\ndmu}\leq \sigma_0$, we have 
    \begin{align}\label{H1 I}
        I^1&\le e^{-(2\mu-1)\eptwo\tau}  e^{-(2\mu-1)(1-\eptwo)\tau}e^{(2\mu-1)\tau_0}\sigma_0^2\leq \frac{1}{4}B^2e^{-(2\mu-1)\eptwo \tau}
    \end{align}
    if we choose $\sigma_0$ be sufficiently small such that
    \be \label{delta1}
        e^{-(2\mu-1)(1-\eptwo)\tau_0}e^{(2\mu-1)\tau_0}\sigma_0^2\leq \frac{1}{4}B^2.
    \ee
    For the last piece $IV^1$, we split it into two cases: when $\frac{(-4\mu+1)(1-\mu)}{2}+(\mu-\frac12)(2-\eptwo)\neq 0$,
    \begin{align*}
        IV^1 &= e^{-(2\mu-1)\eptwo\tau} \frac{ CB y_0^{\frac{-4\mu +1}{2}}}{\frac{(-4\mu+1)(1-\mu)}{2}+(\mu-\frac12)(2-\eptwo)}\Big(e^{-\frac{(-4\mu+1)(1-\mu)}{2}\tau_0}e^{\big(\frac{(-4\mu+1)(1-\mu)}{2}+(\mu-\frac12)\eptwo\big)\tau} \\
        &-e^{(\mu-\frac12)\eptwo\tau_0}e^{-(\mu-\frac12)(2-2\eptwo)(\tau-\tau_0)}\Big)\\
        &\leq e^{-(2\mu-1)\eptwo\tau} \frac{2 e^{(\mu-\frac12)\eptwo\tau_0}CB y_0^{\frac{-4\mu +1}{2}}}{\left|\frac{(-4\mu+1)(1-\mu)}{2}+(\mu-\frac12)(2-\eptwo)\right|}.
     \end{align*}
    We have used $\frac{(-4\mu+1)(1-\mu)}{2}+(\mu-\frac12)\eptwo<0$ and $(\mu-\frac12)(2-2\eptwo)>0$ from \eqref{sequence a_1, a_2, a_0} in the last inequality. When $\frac{(-4\mu+1)(1-\mu)}{2}+(\mu-\frac12)(2-\eptwo)= 0$, we then have $(2\mu-1)(1-\eptwo)>0$ and compute
     \begin{align*}
         IV^1 &= CB y_0^{\frac{-4\mu +1}{2}}e^{\frac{-(-4\mu+1)(1-\mu)}{2}\tau_0} e^{-(2\mu-1)\tau}(\tau-\tau_0)\\
        &=e^{-(2\mu-1)\eptwo\tau} CB y_0^{\frac{-4\mu +1}{2}}e^{(\mu-\frac12)\eptwo\tau_0}e^{-(2\mu-1)(1-\eptwo)(\tau-\tau_0)} (\tau-\tau_0)\\
        &\leq e^{-(2\mu-1)\eptwo\tau}B \frac{Cy_0^{\frac{-4\mu +1}{2}}e^{(\mu-\frac12)\eptwo\tau_0-1}}{(2\mu-1)(1-\eptwo)} 
     \end{align*}
    where we have replaced $\frac{(-4\mu+1)(1-\mu)}{2}$ by $-(\mu-\frac12)(2-\eptwo)$ in the third equality and used $e^{-ax}x\leq \frac{1}{ae}$ with $a>0$ for any $x\geq 0$ in the last inequality. Now, choosing 
    \be
            y_0=e^{\vartheta \tau_0}, \ \text{where } \vartheta>0 \text{ to be determined}.
    \ee
    Then
    \be\label{y0}
    \frac{Cy_0^{\frac{-4\mu +1}{2}}e^{(\mu-\frac12)\eptwo\tau_0-1}}{(2\mu-1)(1-\eptwo)} = \frac{C}{(2\mu-1)(1-\eptwo)} e^{\big(\frac{-4\mu +1}{2}\vartheta+(\mu-\frac12)\eptwo\big)\tau_0-1}.
    \ee
    Once we have
    \be\label{vartheta}
        \frac{-4\mu +1}{2}\vartheta+(\mu-\frac12)\eptwo<0
    \ee
    together with $\tau_0$ sufficiently large,
    we establish
    \be\label{H1 IV}
        IV^1\leq \frac{1}{4}B^2 e^{-(2\mu-1)\eptwo\tau}.
    \ee
    We conclude that, by selecting $\tau_0$, $\sigma_0$, $y_0$, and $\vartheta$ so as to satisfy \eqref{B}, \eqref{tau0}, \eqref{delta1}, \eqref{y0}, \eqref{vartheta} the estimates \eqref{H1 III}, \eqref{H1 II}, \eqref{H1 I} and \eqref{H1 IV} are derived, thereby establishing \eqref{pay z w}.

\

\noindent\underline{Proof of \eqref{payn z w}}: From Lemma \ref{ode Hn}, 
	\beqa
		 \pa_\tau (e^{\al^2\tau}F_{\ndmu}(\tau))
	&\le   C { y_0^{\frac32-2\mu-\ndmu}}e ^{-(1-\mu)(\ndmu +2\mu -\frac{3}{2}) (\tau-\tau_0)}  e^{\al^2\tau}e^{-\epone \tau} +Ce^{\al^2\tau}e^{-\epzero \tau}e^{-\epone \tau}\\
     &+ CB^{1-\frac{1}{2(\ndmu-1)}}e^{\al^2\tau}e^{-(\mu-\frac12)(1-\frac{1}{2(\ndmu-1)})\eptwo\tau} e^{-(2+\frac{1}{2(\ndmu-1)})\epone \tau}
	\eeqa
    where
    \beqa
        F_{\ndmu}(\tau):=&\norm{\espz(\tau,\cdot)}_{\dot H^{\ndmu}}^2+b \norm{\espw(\tau,\cdot)}_{\dot H^{\ndmu}}^2 +  C(\norm{\frac{\espz}{y^{\ndmu}}(\tau,\cdot)}_{L^2}^2 + b\norm{\frac{\espw}{y^{\ndmu}}(\tau,\cdot)}_{L^2}^2)\\
        \al^2:=&(1-\mu)\ndmu - \frac32 + \frac{\mu}{2} .
    \eeqa
    We integrate it over the interval $[\tau_0,\tau]$ and obtain
    \begin{align*}
        F_{\ndmu}(\tau) & = e^{-\al^2 (\tau-\tau_0)}  F_{\ndmu}(\tau_0) + C { y_0^{\frac32-2\mu-\ndmu}}e^{-\al^2 \tau}e ^{(1-\mu)(\ndmu +2\mu -\frac32)\tau_0}\int_{\tau_0}^\tau e ^{\big(-(1-\mu)(\ndmu +2\mu -\frac32)+\al^2-\epone \big)\tau'}\ d\tau' \\
        &+ Ce^{-\al^2\tau}\int_{\tau_0}^\tau e^{(\al^2-\epzero-\epone)\tau'} \ d\tau' + CB^{1-\frac{1}{2(\ndmu-1)}}e^{-\al^2\tau}\int_{\tau_0}^\tau e^{-(\mu-\frac12)(1-\frac{1}{2(\ndmu-1)})\eptwo\tau'} e^{-(2+\frac{1}{2(\ndmu-1)})\epone \tau'} e^{\al^2 \tau'}\ d\tau'\\
        &=: I^2+II^2+III^2+IV^2.
    \end{align*}
    Following the same procedure as in the proof of \eqref{pay z w}, we determine the parameters in the order $\tau_0$, $\sigma_0$, and $y_0$. We begin with the estimate of $IV^2$. Using \eqref{sequence a_1, a_2, a_0}, we compute
    \[
    \kappa^2:=-(\mu-\frac12)(1-\frac{1}{2(\ndmu-1)})\eptwo -(2+\frac{1}{2(\ndmu-1)})\epone+\al^2 >0.
    \]
    Consequently,
    \beqa\label{IV2}
        IV^4 \leq e^{-2\epone\tau}CB^{1-\frac{1}{2(\ndmu-1)}}(\kappa^2)^{-1}e^{-(\mu-\frac12)(1-\frac{1}{2(\ndmu-1)})\eptwo\tau} e^{-\frac{1}{2(\ndmu-1)}\epone \tau}
    \eeqa
    Next, we consider $III^2$. Since $\al^2-\epzero-\epone>0$ from \eqref{sequence a_1, a_2, a_0}, it follows that
\beqa\label{III2}
    III^2 \leq C e^{-(\epzero+\epone)\tau}\leq Ce^{-2\epone\tau} e^{-(\epzero-\epone)\tau}.
\eeqa
From \eqref{IV2} and \eqref{III2}, by choosing $\tau_0$ large enough such that
\be\label{tau01}
   CB^{1-\frac{1}{2(\ndmu-1)}}(\kappa^2)^{-1}e^{-(\mu-\frac12)(1-\frac{1}{2(\ndmu-1)})\eptwo\tau_0} e^{-\frac{1}{2(\ndmu-1)}\epone \tau_0}+ Ce^{-(\epzero-\epone)\tau_0} \leq \frac12
\ee
we establish
\be
    III^2+IV^2 \leq \frac12e^{-2\epone\tau}.
\ee
    We now turn to $II^2$. Note that $ -(1-\mu)(\ndmu +2\mu -\frac32)+\al^2 =\mu(2\mu-3) <0$
    for $\mu\in[\frac12,1)$.   
Hence,
\beqa\label{II2}
    II^2 &\leq \frac{C{ y_0^{\frac32-2\mu-\ndmu}}}{-\mu(2\mu-3)+\epone}e^{-\al^2 \tau} e ^{\al^2\tau_0}e^{-\epone \tau_0} = e^{-2\epone\tau} \frac{C{ y_0^{\frac32-2\mu-\ndmu}}e^{\epone\tau_0}}{-\mu(2\mu-3)+\epone}e^{-(\al^2-2\epone)(\tau-\tau_0)} \\
    &\leq e^{-2\epone\tau}\frac{Ce^{\big({(\frac32-2\mu-\ndmu)}\vartheta+\epone\big)\tau_0}}{\mu(2\mu-3)+\epone}
\eeqa
where we used \eqref{y0} to replace $y_0$ by $e^{\vartheta\tau_0}$. {Then, by selecting $\vartheta$ such that
\be\label{vartheta 1}
    (\frac32-2\mu-\ndmu)\vartheta+\epone <0
\ee}
we deduce
\be
    II^2 \leq \frac14  e^{-2\epone\tau} 
\ee
for $\tau_0$ large enough.
Finally, for the term $I^2$, we have
    \be
        e^{-\al^2 (\tau-\tau_0)}  F_{\ndmu}(\tau_0) \leq e^{-2\epone\tau}e^{-(\al^2-2\epone)\tau}C b e^{\al^2 \tau_0}  \sigma_0^2 \leq \frac{1}{4}e^{-2\epone \tau} 
    \ee
    by selecting $\sigma_0$ sufficiently small such that
    \be\label{delta11}
        Cb e^{\al^2 \tau_0}\sigma_0^2 \leq \frac{1}{4}.
    \ee
    By choosing $\tau_0$ sufficiently large as in \eqref{tau0} and \eqref{tau01}, $\sigma_0$ sufficiently small 
    as in \eqref{delta1} and \eqref{delta11}, and $\vartheta$ 
    as in \eqref{vartheta} and \eqref{vartheta 1}, we combine the above estimates and  conclude that
    \be
        F_{\ndmu}(\tau)\leq  e^{-2\epone \tau}
    \ee
    which completes the proof.

\

\noindent\underline{Proof of \eqref{L2}}: Now, we close the bootstrap assumption of the $L^2$ norm. From Lemma \ref{lemma L2},
 \begin{align*}
    & \pa_\tau\Big(e^{-(3-2\mu)\tau}(\norm{\espz}_{L^2}^2+\norm{\espw}_{L^2}^2)\Big) \\ \leq&  CB^2 e^{-\frac{\mu\epone}{2\ndmu-\mu}\tau} +CBe^{-(\frac{3}{2}-2\mu)\tau}(G[\mu])^{\frac12} +CB^{3-\frac{1}{2(\ndmu-1)}} e^{-(1-\frac{1}{2(\ndmu-1)})(\mu-\frac12)\eptwo \tau}e^{-\frac{1}{2(\ndmu-1)}\epone\tau} .
 \end{align*}
 We integrate it in the interval $[\tau_0,\tau]$ and obtain
 \begin{align*}
    \norm{\espz(\tau,\cdot)}_{L^2}^2+\norm{\espw(\tau,\cdot)}_{L^2}^2 &\leq e^{(3-2\mu)(\tau-\tau_0)}\Big(\norm{\espz(\tau_0,\cdot)}_{L^2}^2+\norm{\espw(\tau_0,\cdot)}_{L^2}^2\Big)\\
   &+ B^2e^{(3-2\mu)\tau}C(\frac{\mu\epone}{2\ndmu-\mu})^{-1}e^{-\frac{\mu\epone}{2\ndmu-\mu}\tau_0}\\
    &+B^2e^{(3-2\mu)\tau}\frac{CB^{1-\frac{1}{2(\ndmu-1)}}e^{-(1-\frac{1}{2(\ndmu-1)})(\mu-\frac12)\eptwo \tau_0}e^{-\frac{1}{2(\ndmu-1)}\epone\tau_0}}{(1-\frac{1}{2(\ndmu-1)})(\mu-\frac12)\eptwo+\frac{1}{2(\ndmu-1)}\epone}   \\
    &+CBe^{(3-2\mu)\tau}\int_{\tau_0}^\tau e^{-(\frac32-\mu)\tau'}(G[\mu])^{\frac12}\ dy
 \end{align*}
 where we have used $\frac{\mu\epone}{2\ndmu-\mu}>0$ and $(1-\frac{1}{2(\ndmu-1)})(\mu-\frac12)\eptwo+\frac{1}{2(\ndmu-1)}\epone>0$ to obtain the above inequality.
 The summation of the first four terms can be bounded by $\frac{1}{4}B^2e^{(3-2\mu)(\tau-\tau_0)}$ by choosing $\tau_0$ and $\sigma_0$ that satisfy
 \beqa
       e^{-(3-2\mu)\tau_0}\sigma_0^2&\leq \frac14 B^2,\\
      C(\frac{\mu\epone}{2\ndmu-\mu})^{-1}e^{-\frac{\mu\epone}{2\ndmu-\mu}\tau_0}, \ \frac{CB^{1-\frac{1}{2(\ndmu-1)}}e^{-(1-\frac{1}{2(\ndmu-1)})(\mu-\frac12)\eptwo \tau_0}e^{-\frac{1}{2(\ndmu-1)}\epone\tau_0}}{(1-\frac{1}{2(\ndmu-1)})(\mu-\frac12)\eptwo+\frac{1}{2(\ndmu-1)}\epone}  &\leq \frac{1}{4}.
 \eeqa
 It remains to show 
 \be\label{V}
    V:=C\int_{\tau_0}^\tau e^{-(\frac32-\mu)\tau'}(G[\mu])^{\frac12}\ dy\leq \frac{1}{4}B.
 \ee
 By recalling $G[\mu]$ as defined in \eqref{Fmu}, we bound it case by case. When $\mu\in[\frac12,\frac34)$, we compute
 \begin{align*}
   Cy_0^{\frac32-2\mu}\int_{\tau_0}^\tau e^{-(\frac32-\mu)\tau'} e^{(\frac32-2\mu)(\tau'-\tau_0)}\ dy &\leq \frac{y_0^{\frac32-2\mu}}{\mu}e^{-(\frac{3}{2}-\mu)\tau_0} = \frac1\mu e^{\Big((\frac{3}{2}-2\mu)\vartheta -\frac32+\mu\Big)\tau_0} = \frac{1}{\mu} e^{-\mu\tau_0} \leq \frac{1}{4}B
 \end{align*}
 for $\tau_0$ sufficiently large with 
$\vartheta=1$ such that 
 \be\label{y_0}
 y_0=e^{\tau_0}.
 \ee
 It is a routine matter to verify that \eqref{vartheta} and \eqref{vartheta 1} hold for $\vartheta=1$. We next show that with $y_0=e^{\tau_0}$, \eqref{V} holds for $\mu=\frac34$ and $\mu\in(\frac34,1)$. The case $\mu\in(\frac34,1)$ is treated analogously to the first case by noting that $3-4\mu<0$.  When $\mu=\frac34$, for $\tau_0$ sufficiently large, we have
 \be
        V = C\int_{\tau_0}^\tau e^{-\frac34\tau'}\sqrt{\ln 2+\tau'}\ dy \leq   C\int_{\tau_0}^\tau e^{-\frac12\tau}\ dy \leq \frac14 B.
 \ee
This completes the proof.
 
\section{Proof of the main theorems}  

We will first prove the Theorem \ref{thm 4.1}.
\begin{proof}[Proof of Theorem \ref{thm 4.1}]
The existence of a well-posed solution in $H^\ndmu$ follows directly from Proposition \ref{bootstrap lemma} via a continuity argument. The decay estimate \eqref{decay fact} is a consequence of Proposition \ref{bootstrap lemma}, Proposition \ref{prop payi mz mw}, and Lemma 
\ref{Linfty decay}. Indeed, by using Lemma 
\ref{Linfty decay}, we have
\begin{align*}
    &\norm{\big(\pa_y\tilde z(\tau,\cdot)-\pa_y \mathring z(\tau,\cdot), \pa_y\tilde w(\tau,\cdot)\big)}_{L^\infty} \\&\leq \norm{(\pa_y\mz,\pa_y\mw)}_{L^\infty} + \norm{(\pa_y\espz,\pa_y\espw)}_{L^\infty} \\
    &\leq \norm{(\pa_y\mz,\pa_y\mw)}_{L^\infty} + \norm{\big(\pa_y\espz(\tau,\cdot), \pa_y\espw(\tau,\cdot)\big)}_{L^2}^{1-\frac{1}{2(\ndmu-1)}}\norm{\big(\pa_y^\ndmu\espz(\tau,\cdot), \pa_y^\ndmu\espw(\tau,\cdot)\big)}_{L^2}^{\frac{1}{2(\ndmu-1)}}\\
    & \lesssim e^{-\epzero\tau} + e^{-(\mu-\frac12)(1-\frac{1}{2(\ndmu-1)})\eptwo\tau}e^{-\frac{\epone}{2(\ndmu-1)}\tau}.
\end{align*}
where we have applied Proposition \ref{bootstrap lemma} and Proposition \ref{prop payi mz mw} to obtain the last inequality.
The statement on the finite codimensional stability follows directly from \eqref{boundary condition} of $I^{\lfloor \beta \rfloor}$ together with Proposition \ref{trapping}.
\end{proof}

\begin{proof}[Proof of Theorem \ref{thm 1.1}]
    Direct computation shows
    \begin{align*}
        \norm{z}_{L^2_x}^2 &= \int_0^\infty z^2(t,x) \ dx = \int_0^\infty (\frac{1}{\mu})^{2\delta} e^{(2\mu-2)\tau}\hat z^2(\tau,y)(\frac{1}{\mu})^{\delta} e^{- \tau} \ dy = (\frac{1}{\mu})^{3\delta} e^{(2\mu- 3)\tau}\int_0^\infty \hat z^2(\tau,y) \ dy\\
       & \leq Ce^{(2\mu- 3)\tau} \big(\norm{\mathring z}_{L^2}^2+\norm{\mz}_{L^2}^2+\norm{\espz}_{L^2}^2 \big).
    \end{align*}
 By Proposition \ref{prop: mathring z}, Proposition \ref{prop payi mz mw}, together with the estimate \eqref{L2}, we obtain the $L^2$-boundedness of $z$. The same argument yields the boundedness of $\norm{w}_{L^2_x}$. 
 Hence, we deduce the $L^2$ boundedness of $u$ and $c$. 
 
 The asymptotic behavior \eqref{1.7} is a direct consequence of \eqref{decay fact} and Proposition \ref{prop: mathring z}.
 
    The positivity of $\rho$ can be shown provided that the initial data has strictly positive density in the interior of the domain. Introducing the Lagrangian flow map defined by
    \[\frac{d}{dt}X(t,x) = u(t,X(t,x)),\quad X(t_0,x)=x,\]and integrating the first equation of \eqref{Euler} along the flow map, we have 
    \begin{align*}
        \rho(t,X(t,x)) = \rho(t_0,X(t_0,x)) e^{-\int_{t_0}^t u_x(s,X(s,x))ds}.
    \end{align*}
    Since the solution is smooth for $t_0<t<0$ 
    and the gradient blow-up occurs only at $(t,x)=(0,0)$, the density remains strictly positive in the interior of the domain. 
\end{proof}

\section*{Acknowledgments}
JJ and JL were supported in part by the NSF grant DMS-2306910.

\appendix

\section{Interpolation inequalities}
\begin{lemma}\label{hardy 1}
	Let $n\in \mathbb N_{\geq 3}$ and $\mathtt a>0$. Consider the function $f\in H^n([0,\infty))$ satisfies $f(0)=\pa_yf(0)=\dots=\pa_y^{n-1}f(0)=0$. Then, for any $i\in\{0,\dots,n-1\}$, we have
	\be\label{5.1}
		\norm{y^{-(n-i)} \pa_y^i f}_{L^2([0, \infty))} \leq  A\norm{\pa_y^n f}_{L^2([0, \infty))}
	\ee
	where $A$ is a positive constant.
\end{lemma}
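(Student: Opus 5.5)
We will derive \eqref{5.1} by iterating the classical one-dimensional Hardy inequality on the half-line. For a function $g$ on $[0,\infty)$ with $g(0)=0$ and exponent $k\ge 1$, the weighted Hardy inequality states
\[
\int_0^\infty \frac{|g(y)|^2}{y^{2k}}\,dy \le \frac{4}{(2k-1)^2}\int_0^\infty \frac{|g'(y)|^2}{y^{2k-2}}\,dy .
\]
The plan is to apply this with $g=\pa_y^{i}f$ and $k=n-i$. This gives
\[
\norm{y^{-(n-i)}\pa_y^i f}_{L^2}\le \tfrac{2}{2(n-i)-1}\norm{y^{-(n-i-1)}\pa_y^{i+1}f}_{L^2}.
\]
Descending induction on $i$ from $i=n-1$ down to $0$ then yields \eqref{5.1} with $A=\prod_{k=1}^{n-i}\frac{2}{2k-1}$. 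The base case $i=n-1$ is exactly the standard Hardy inequality $\norm{y^{-1}\pa_y^{n-1}f}_{L^2}\le 2\norm{\pa_y^nf}_{L^2}$, which uses only $\pa_y^{n-1}f(0)=0$.

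To prove the weighted inequality, we first take $g\in C_c^\infty((0,\infty))$. Writing $y^{-2k}=-\frac{1}{2k-1}\pa_y(y^{1-2k})$ and integrating by parts gives
\[
\int_0^\infty g^2y^{-2k}\,dy=\frac{2}{2k-1}\int_0^\infty g g' y^{1-2k}\,dy .
\]
Cauchy--Schwarz bounds the right side by $\frac{2}{2k-1}\big(\int g^2y^{-2k}\big)^{1/2}\big(\int |g'|^2y^{2-2k}\big)^{1/2}$. Dividing by the first factor gives the claim.

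The main obstacle is justifying the boundary terms, or equivalently the density step, for general $f\in H^n$ with the vanishing conditions. Near $y=0$, Taylor's formula with integral remainder and the vanishing of $f,\dots,\pa_y^{n-1}f$ at $0$ give $|\pa_y^i f(y)|\le C y^{n-i-1/2}\norm{\pa_y^n f}_{L^2(0,y)}$. Hence the boundary term $g^2y^{1-2k}$ at $y=\varepsilon$ is $O(\norm{\pa_y^nf}^2_{L^2(0,\varepsilon)})\to0$. At infinity, $g\in H^1$ decays and $y^{1-2k}\to0$.

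For safety, we will first carry out the argument on $[\varepsilon,R]$, where the boundary terms have a favorable sign at $R$ and are controlled at $\varepsilon$ as above, and then let $\varepsilon\to0$ and $R\to\infty$. Monotone convergence then gives the result, including the finiteness of the left side, and avoids assuming a priori that $\norm{y^{-(n-i)}\pa_y^if}_{L^2}<\infty$ before dividing.
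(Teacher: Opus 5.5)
Your proposal is correct and follows the same route as the paper. The paper also integrates by parts against the singular weight to obtain $(k-\tfrac12)\int_0^\infty g^2y^{-2k}\,dy=\int_0^\infty gg'y^{1-2k}\,dy$ with $g=\pa_y^i f$ and $k=n-i$, then applies Cauchy--Schwarz and iterates down to $\pa_y^n f$; your boundary-term justification (the Taylor-remainder bound at $0$, truncation to $[\varepsilon,R]$, and ordering the descending induction so the right-hand side is already known to be finite) supplies rigor that the paper leaves implicit.
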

\begin{proof}
    Direct computation shows that for $ f\in H^n([0,\infty))$ with $f(0)=\pa_yf(0)=\dots=\pa_y^{n-1}f(0)=0$,
    \begin{align*}
        \int_0^\infty \frac{(\pa_y^i f)^2}{y^{2(n-i)}} dy =- \int_0^\infty \frac{(\pa_y^i f)^2}{y^{2(n-i-1)}} (\frac{1}{y})' dy = \int_0^\infty \frac{2\pa_y^i f\pa_y^{i+1} f}{y^{2(n-i-1)+1}}  dy -2(n-i-1)  \int_0^\infty \frac{(\pa_y^i f)^2}{y^{2(n-i)}} dy.
    \end{align*}
    Hence,
    \begin{align*}
        (n-i-\frac12)  \int_0^\infty \frac{(\pa_y^i f)^2}{y^{2(n-i)}} dy = \int_0^\infty \frac{\pa_y^i f\pa_y^{i+1} f}{y^{2(n-i-1)+1}}  dy \ \ \Rightarrow \ \  \norm{\frac{\pa_y^i f}{y^{n-i}}}_{L^2([0,\infty))} \leq A \norm{\frac{\pa_y^{i+1} f}{y^{n-i-1}}}_{L^2([0,\infty))}. 
    \end{align*}  
    Iterating this estimate for $i,i+1,\dots,n-1$ gives \eqref{5.1}.
\end{proof}

\begin{lemma}\label{G-N with weight}
	Let $n\in \mathbb N_{\geq 2}$.  
    Consider the function $f\in H^n([0,\infty))$ satisfies $f(0)=\pa_yf(0)=\dots=\pa_y^{n-1}f(0)=0$. Then, for any $i\in\{1,\dots,n-1\}$ and $y_*>0$, we have
	\be
		\norm{y^{i-n} \pa_y^i f}_{L^2([y_*, \infty))} \leq C_* \norm{y^{-n}f}_{L^2}^{1-\theta} \norm{\pa_y^n f}_{L^2}^{\theta} 
	\ee
	where $\theta=\frac{i}{n}$, and $C_*:=C_*(y_*)$ is a positive constant. 
\end{lemma}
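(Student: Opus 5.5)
The plan is to prove the weighted Gagliardo--Nirenberg inequality by splitting $[y_*,\infty)$ into dyadic-type pieces, or more simply by using a cutoff localized away from the origin, and reducing to the classical unweighted interpolation inequality on $[y_*,\infty)$ together with the Hardy inequality of Lemma \ref{hardy 1}. The key observation is that on $[y_*,\infty)$ the weight $y^{i-n}$ is bounded by $y_*^{i-n}$. So it suffices to bound $\|\pa_y^i f\|_{L^2([y_*,\infty))}$ in terms of the two quantities on the right-hand side, with constants depending on $y_*$.

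First, take a smooth cutoff $\psi$ with $\psi=0$ on $[0,y_*/2]$, $\psi=1$ on $[y_*,\infty)$, and $|\psi^{(k)}|\le C y_*^{-k}$, and set $g=\psi f$, extended by zero to $\R$. Then $g\in H^n(\R)$ and
\[
\|y^{i-n}\pa_y^i f\|_{L^2([y_*,\infty))}\le y_*^{i-n}\|\pa_y^i g\|_{L^2(\R)}.
\]
The classical Gagliardo--Nirenberg inequality on $\R$ gives
\[
\|\pa_y^i g\|_{L^2}\le C\|g\|_{L^2}^{1-\theta}\|\pa_y^n g\|_{L^2}^{\theta},\qquad \theta=\frac in .
\]
Since $\psi$ is supported where $y\ge y_*/2$, we have $\|g\|_{L^2}\le C(y_*)\|y^{-n}f\|_{L^2}$ on the support of $g$. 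This is already the desired weighted factor, up to a $y_*$-dependent constant.

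Next, by Leibniz,
\[
\pa_y^n g=\psi\,\pa_y^n f+\sum_{k\ge1}\binom nk \psi^{(k)}\pa_y^{n-k}f .
\]
The terms with $k\ge1$ are supported in $[y_*/2,y_*]$, where $y\sim y_*$. Hence $|\psi^{(k)}\pa_y^{n-k}f|\le C(y_*)\,|y^{-k}\pa_y^{n-k}f|$ there, and Lemma \ref{hardy 1} bounds these terms by $C(y_*)\|\pa_y^n f\|_{L^2}$. Here the vanishing conditions $\pa_y^j f(0)=0$ for $j\le n-1$ are used; Lemma \ref{hardy 1} is stated for $n\ge3$, but its proof works verbatim for $n\ge 2$. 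Thus $\|\pa_y^n g\|_{L^2}\le C(y_*)\|\pa_y^n f\|_{L^2}$. Combining the three displays gives the claimed estimate with $C_*=C_*(y_*)$.

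The only delicate point is ensuring that the lower-order commutator terms are controlled by $\|\pa_y^n f\|$ alone, rather than by a mixture of $\|y^{-n}f\|$ and $\|\pa_y^n f\|$. Such a mixture would spoil the exact exponent $\theta$. Hardy's inequality resolves this, because the commutators sit on a bounded interval near $y_*$ where $y^{-k}$ is comparable to a constant. Alternatively, one could bound those terms by the interpolation $\|y^{-k}\pa_y^{n-k} f\|\lesssim \|y^{-n}f\|^{k/n}\|\pa_y^nf\|^{1-k/n}$. Doing so, however, would require a further Young's inequality absorption argument, and that absorption does not preserve the multiplicative form. This is why the Hardy route is preferable.
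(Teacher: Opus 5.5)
There is a genuine gap, and it is in your very first reduction, not in the commutator terms. The step $\|g\|_{L^2}\le C(y_*)\|y^{-n}f\|_{L^2}$ is false. The support of $g$ is the unbounded set $[y_*/2,\infty)$, and $y^{-n}\to 0$ as $y\to\infty$, so $\|y^{-n}f\|_{L^2}$ does not control $\|f\|_{L^2([y_*/2,\infty))}$.

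Adjusting constants cannot fix this, because replacing $y^{i-n}$ by $y_*^{i-n}$ on the left already throws away too much. Take $f(y)=\phi(y/R)$ with $\phi\in C_c^\infty((1,2))$ and $R>y_*$; this $f$ satisfies all the vanishing conditions at $0$. Then
\[
\|\partial_y^i f\|_{L^2([y_*,\infty))}\sim R^{\frac12-i},\qquad \|y^{-n}f\|_{L^2}^{1-\theta}\|\partial_y^n f\|_{L^2}^{\theta}\sim R^{\frac12-n}.
\]
Since $i<n$, the unweighted quantity you reduce to is not bounded by the right-hand side as $R\to\infty$. The decay of $y^{i-n}$ on the left is exactly what balances the weight $y^{-n}$ on the right at large $y$, so any valid argument must keep it.

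The paper keeps the weight by a dyadic decomposition. Set $I_j=[\lambda_j,2\lambda_j]$ with $\lambda_j=2^jy_*$; on $I_j$ both weights are comparable to the constants $\lambda_j^{i-n}$ and $\lambda_j^{-n}$. Rescale $I_j$ to $[1,2]$, apply Gagliardo--Nirenberg on $[1,2]$ (with its lower-order term), and scale back. The powers of $\lambda_j$ cancel exactly, giving the bound
\[
\|\partial_y^n f\|_{L^2(I_j)}^{2\theta}\|y^{-n}f\|_{L^2(I_j)}^{2-2\theta}+\|y^{-n}f\|_{L^2(I_j)}^2 .
\]
Summing over $j$ with the discrete H\"older inequality gives the product of the global norms. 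The leftover $\|y^{-n}f\|_{L^2}^2$ is then put into multiplicative form using Lemma \ref{hardy 1}: $\|y^{-n}f\|_{L^2}^2\le A^{2\theta}\|y^{-n}f\|_{L^2}^{2-2\theta}\|\partial_y^nf\|_{L^2}^{2\theta}$.

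Your Hardy-based bound $\|\partial_y^n g\|_{L^2}\le C(y_*)\|\partial_y^n f\|_{L^2}$ is correct. The failure lies only in the zeroth-order factor and the initial loss of the weight at infinity.
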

\begin{proof}
	We first define $f_\lambda(y) = f(\lambda y)$ and compute 
	\[
		\int_{a}^{b} (\pa_y^i f(y))^2 \ dy = \lambda^{1-2i}\int_{ a/\lambda}^{b/\lambda} (\pa_y^i f_\lambda(y))^2 \ dy, \ \ \int_{a}^{b} y^{-2n} (f(y))^2 \ dy = \lambda^{1-2n}\int_{ a/\lambda}^{b/\lambda} y^{-2n} (f_\lambda(y))^2 \ dy.
	\] 
	 We then set $\lambda_j =  2^j y_*$, and define $f_j:=f_{\lambda_j}$ for $y\in[2^jy_*, 2^{j+1}y_*]$. Then, 
	 \begin{align*}
	 	\int_{y_*}^\infty y^{-2(n-i)} (\pa_y^i f)^2 \ dy 
        &\leq \sum_{j=0}^\infty \lambda_j^{1-2n}  \int_{1 }^{2}    (\pa_y^i f_j)^2 \ dy \\
		&\leq  \sum_{j=0}^\infty  \lambda_j^{1-2n}C_1 \Big( \norm{\pa_y^n f_j}_{L^2([1,2])}^{\frac{2i}{n}}\norm{f_j}_{L^2([1,2])}^{2-\frac{2i}{n}}+\norm{f_j}_{L^2([1,2])} \Big)\\
        &\leq \sum_{j=0}^\infty C_1  \Big(  \norm{\pa_y^n f}_{L^2([\lambda_j,2\lambda_j])}^{\frac{2i}{n}}\norm{ y^{-n}f}_{L^2([\lambda_j,2\lambda_j])}^{2-\frac{2i}{n}}+ \norm{ y^{-n}f}_{L^2([\lambda_j,2\lambda_j])}\Big) 
	 \end{align*}
     where we have used the Gagliardo–Nirenberg inequality in the second inequality.
     We then use the H\"{o}lder inequality on the first summation and together with Lemma \ref{hardy 1}, we deduce the result.   
\end{proof}

\begin{lemma}\label{Linfty decay}
	Let $f \in\dot H^1([0,\infty)) \cap \dot H^n([0,\infty))$. Suppose $\pa_y ^j f(0) =0$ for some $j\in\{1,\dots, n-1\}$. Then, 
	\be
		\norm{\pa_y^j f}_{L^\infty} \lesssim \norm{\pa_y f}_{L^2}^{1-\frac{2j-1}{2(n-1)}} \norm{\pa_y^n f}_{L^2}^{\frac{2j-1}{2(n-1)}}. 
	\ee
\end{lemma}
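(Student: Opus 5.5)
The plan is to reduce to a one-dimensional Gagliardo--Nirenberg estimate for $g:=\pa_y^j f$, applied in two steps. The vanishing condition $g(0)=0$ lets us write $g(y)^2 = 2\int_0^y g\,g'\,dy'$ on the half-line. This gives the Agmon-type bound
\[
\norm{\pa_y^j f}_{L^\infty}^2 \le 2\norm{\pa_y^j f}_{L^2}\norm{\pa_y^{j+1} f}_{L^2},
\]
and no boundary terms appear because of the assumption $\pa_y^j f(0)=0$. Note that this first step needs only $g(0)=0$, not vanishing of any other derivatives.

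Next we interpolate the two $L^2$ norms on the right between $\norm{\pa_y f}_{L^2}$ and $\norm{\pa_y^n f}_{L^2}$. Set $h=\pa_y f\in L^2\cap \dot H^{n-1}$. The half-line interpolation inequality
\[
\norm{\pa_y^k h}_{L^2}\lesssim \norm{h}_{L^2}^{1-\frac{k}{n-1}}\norm{\pa_y^{n-1}h}_{L^2}^{\frac{k}{n-1}},\qquad 0\le k\le n-1,
\]
holds without boundary conditions. To justify it, we extend $h$ to $\R$ by a bounded higher-order reflection (Hestenes/Seeley extension). This operator is bounded simultaneously on $L^2$ and $\dot H^{n-1}$: the reflection coefficients are chosen so that derivatives up to order $n-2$ match at $0$, and the extension operator commutes with the dilations $y\mapsto\lambda y$. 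On $\R$ the inequality then follows from Plancherel and Hölder in frequency. Alternatively, one can prove it by the dyadic scaling argument already used in Lemma \ref{G-N with weight}.

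We apply the interpolation with $k=j-1$ and $k=j$, and multiply the results:
\[
\norm{\pa_y^j f}_{L^2}\norm{\pa_y^{j+1} f}_{L^2}\lesssim \norm{\pa_y f}_{L^2}^{2-\frac{2j-1}{n-1}}\norm{\pa_y^n f}_{L^2}^{\frac{2j-1}{n-1}}.
\]
Taking square roots yields the claimed exponents $1-\frac{2j-1}{2(n-1)}$ and $\frac{2j-1}{2(n-1)}$. A scaling check confirms consistency: under $f\mapsto f(\lambda\cdot)$ both sides scale like $\lambda^{j-\frac12}$.

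The main obstacle is the interpolation step on the half-line in the homogeneous setting: the constant must not depend on any length scale, and no boundary data is available for $h$ at the top order. I would handle this via the dilation-commuting reflection extension just described. For $j=n-1$ the exponent of $\norm{\pa_y^{j+1}f}_{L^2}$ is trivial since $k=j=n-1$. A density argument (smooth compactly supported-in-$[0,\infty)$ functions with the stated vanishing condition) justifies the integration by parts used in the first step.
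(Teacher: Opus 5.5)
Your proposal is correct and follows the paper's proof: the identity $(\pa_y^j f)^2 = 2\int_0^y \pa_y^j f\,\pa_y^{j+1} f$ with Cauchy--Schwarz, then Gagliardo--Nirenberg for $\norm{\pa_y^j f}_{L^2}$ and $\norm{\pa_y^{j+1} f}_{L^2}$ between $\norm{\pa_y f}_{L^2}$ and $\norm{\pa_y^n f}_{L^2}$, multiplied together. Your dilation-commuting extension argument supplies the scale-invariant half-line interpolation that the paper simply cites, though there is a small slip in your scaling check: under $f\mapsto f(\lambda\cdot)$ both sides scale like $\lambda^{j}$, not $\lambda^{j-\frac12}$, and they are still consistent.
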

\begin{proof}
	Direct computation shows
    \begin{align*}
        (\pa_y^j f)^2 = \int_0^y 2\pa_y^j f \pa_y^{j+1} f d\tilde y.
    \end{align*}
    Hence, by using Holder inequality, we obtain
    \[
        \norm{\pa_y^j f}_{L^\infty} \lesssim \norm{\pa_y^j f}_{L^2}^{\frac{1}{2}}\norm{\pa_y^{j+1}f}_{L^2}^{\frac{1}{2}}.
    \]
    By the Gagliardo–Nirenberg interpolation inequality
    \[
        \norm{\pa_y^jf}_{L^2} \lesssim \norm{\pa_y f}_{L^2}^{1-\frac{j-1}{n-1}} \norm{\pa_y^n f}_{L^2}^{\frac{j-1}{n-1}}, \ \ \ 
        \norm{\pa_y^{j+1}f}_{L^2} \lesssim \norm{\pa_y f}_{L^2}^{1-\frac{j}{n-1}} \norm{\pa_y^n f}_{L^2}^{\frac{j}{n-1}}, 
    \]
    we obtain the desired inequality. 
\end{proof}

\section{Trivial mode instability}\label{eigenvalue subsection}
We discuss trivial mode instability of a simple wave state $(\bar z(y), 0)$ associated with symmetries of the system. 
Since $w$ is secondary, we will let $w\equiv 0$ and consider 
the equation for $z$: 
\be\label{3.1}
    \pa_t z + \frac{\ga+1}{4}z\pa_x z =0. 
\ee
Our discussion closely follows \cite{CGN18} and \cite{EF00} on self similarity in shocks for Burgers equation. 
If $z(t,x)$ is a solution to \eqref{3.1}, by time and space translation, Galilean transformation, space and time scaling invariances, 
\be
    \frac{\al}{\nu} z(\frac{t-t_0}{\nu}, \frac{x-x_0-vt}{\al})+\frac{4}{\ga+1} v
\ee
are also solutions to \eqref{3.1}. In particular, $z(t,x) = -\delta(T-t)^{\delta-1} \bar z (\frac{x-x_0-vt}{(T-t)^\delta}) + \frac{4}{\ga+1} v$ is also a solution. 
We can then compute the spatial counterpart of the associated infinitesimal generators of the above transformations:
\be
    \Lambda_{v}= \delta \pa_y  +\frac{4}{\ga+1},\ \ \Lambda_{x_0} = \pa_y , \ \ \Lambda_{T} = (1-\delta)\text{Id} + \delta y \pa_y,\ \ \Lambda_{\al} = \text{Id} - y\pa_y
\ee
where $\frac{4}{\ga+1}$ is the constant function and Id is identity. 

With $w\equiv 0$, the linearized operator (cf.\eqref{4.5}) is given by 
\be
    \mathcal L = (\mu-1+\frac{\ga+1}{4}\pa_y \bar z)+(y+\frac{\ga+1}{4}\bar z)\pa_y
\ee
where we recall $\bar z$ satisfies \eqref{2.8}: 
\[
       (y+\frac{\ga+1}{4}\bar z)\pa_y \bar z- (1-\mu)\bar z =0.
\]
In the following we discuss the unstable part of the spectrum of $\mathcal L$.  
\begin{proposition}\label{trivial mode}
    Let $\ga\in(1,3)$ and $\mu\in[\frac12,1)$.  
    The unstable spectrum consisting of non-positive eigenvalues of $\mathcal L$ 
    on the H\"{o}lder continuous functions $C^{[\frac{1}{1-\mu}], \frac{1}{1-\mu} - [\frac{1}{1-\mu}]}
    $ is given by  
    \be\notag 
            \mathfrak R(\mathcal L) = 
            \begin{cases}
            \{j (1-\mu)-1 \ | \ j\in \mathbb{N}_{\ge 0}\cap [0,\frac{1}{1-\mu} ]  \}, & \text{if } \mu \in \ssmu  \\ 
            \{j (1-\mu)-1 \ | \ j\in \mathbb{N} \cap [1, \lfloor\frac{1}{1-\mu}\rfloor ] \} \cup \{0\}, & \text{if } \mu \notin \ssmu 
            \end{cases}
    \ee
    The eigenfunctions related to symmetries are 
    \be\label{5.17}
         \mathcal L(\Lambda_{x_0} \bar z) = -(\Lambda_{x_0} \bar z),\ \ \mathcal L(\Lambda_{T} \bar z) = -\mu (\Lambda_{T} \bar z),\ \  \mathcal L(\Lambda_{v} \bar z) = - (1-\mu)\Lambda_{v} \bar z,\ \ \mathcal L(\Lambda_\al \bar z)=0. 
    \ee 
    Moreover, these eigenfunctions are given by
    \begin{align}\label{eigenfunction}
        \mathcal L(\phi_a) = a \phi_a,\qquad \phi_a = \frac{(-\bar z)^{\frac{1+a}{1-\mu}}}{1-\mu+(-\bar z )^{\frac{\mu}{1-\mu}}}\text{ with } a \in \mathfrak R(\mathcal L).
    \end{align}
\end{proposition}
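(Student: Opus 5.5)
The plan is to solve the eigenvalue equation $\mathcal L\phi=a\phi$ explicitly, by factoring out the translation mode. Write $V:=y+\frac{\ga+1}{4}\bar z$ and $c_0:=\mu-1+\frac{\ga+1}{4}\pa_y\bar z$, so that $\mathcal L=V\pa_y+c_0$. Differentiating \eqref{2.8} once in $y$ gives
\[
V\pa_y(\pa_y\bar z)+\big(\mu+\tfrac{\ga+1}{4}\pa_y\bar z\big)\pa_y\bar z=0 ,
\]
that is, $\mathcal L(\pa_y\bar z)=-\pa_y\bar z$. This is the first identity in \eqref{5.17}, since $\Lambda_{x_0}\bar z=\pa_y\bar z$.

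Next, \eqref{2.8} itself reads $V\pa_y\bar z=(1-\mu)\bar z$. Hence for any power $p$,
\[
V\pa_y\big((-\bar z)^p\big)=p(1-\mu)(-\bar z)^p .
\]
Using the Leibniz-type identity $\mathcal L(gh)=h\,\mathcal L g+g\,V\pa_y h$, we get
\[
\mathcal L\big(\pa_y\bar z\,(-\bar z)^p\big)=\big(p(1-\mu)-1\big)\,\pa_y\bar z\,(-\bar z)^p .
\]
On $(0,\infty)$ we have $V>0$ by Proposition \ref{prop bar z}. So $\mathcal L\phi=a\phi$ is a first-order linear ODE with a one-dimensional solution space there, and every eigenfunction is therefore a multiple of
\[
\phi_a=\pa_y\bar z\,(-\bar z)^{\frac{1+a}{1-\mu}} .
\]
To reach the closed form \eqref{eigenfunction}, I will express $\pa_y\bar z$ as a function of $\bar z$. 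This uses Proposition \ref{prop bar z}(1) together with the implicit relation \eqref{B formula} between $y$ and $\bar U=-\bar z/(2y)$, with a normalization of $K$ and of the constant multiple. This step is routine algebra, though the bookkeeping of constants needs care.

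The main step is selecting which $a\le0$ give admissible functions in $C^{\lfloor\beta\rfloor,\beta-\lfloor\beta\rfloor}$, with $p=\frac{1+a}{1-\mu}\in[0,\beta]$. I will use the expansion \eqref{z y<<1} near $y=0$:
\[
(-\bar z)^p=c\,y^p\big(1+O(y^{\beta-1})\big),\qquad \pa_y\bar z=-\tfrac{4\mu}{\ga+1}+\beta c_1y^{\beta-1}+\dots,
\]
so that
\[
\phi_a\sim y^p+y^{p+\beta-1}+\dots .
\]
If $p$ is not an integer and $p<\beta$, the term $y^p$ has Hölder regularity strictly below $C^{\lfloor\beta\rfloor,\beta-\lfloor\beta\rfloor}$ (for $p>\lfloor\beta\rfloor$ this is only true because the exponent $p$ lies strictly below $\beta$), so it is excluded. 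The endpoint $p=\beta$ is admissible and gives $a=0$. An integer $p\ge1$ is admissible because $p+\beta-1\ge\beta$.

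The delicate case is $p=0$, i.e.\ $\phi=\pa_y\bar z$. It contains $y^{\beta-1}$, which is smooth when $\mu\in\ssmu$ but only $C^{\lfloor\beta\rfloor-1,\cdot}$ when $\mu\notin\ssmu$. This explains why $j$ starts at $0$ in the first case and at $1$ in the second, while $a=0$ ($p=\beta$) is added separately in the second case. I will also check via Proposition \ref{prop bar z}(4) that growth at infinity imposes no further restriction.

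Finally, the remaining identities in \eqref{5.17} will be checked by direct computation from \eqref{2.8} and its derivative. For example, $\Lambda_\al\bar z=\bar z-y\pa_y\bar z$ should be a multiple of $\phi_0=\pa_y\bar z\,(-\bar z)^{\beta}$. Rather than verifying this by hand, I will verify $\mathcal L(\Lambda\bar z)=\lambda\Lambda\bar z$ for each generator, using $\mathcal L\bar z=-(1-\mu)\bar z+\dots$ and the commutator $[\mathcal L,y\pa_y]$, with the same ODE manipulations. The case $\Lambda_v$ includes the constant function, on which $\mathcal L$ acts by multiplication by $c_0$; combining this with $\delta\mathcal L(\pa_y\bar z)$ and \eqref{2.8} should give the eigenvalue $-(1-\mu)$. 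I expect the regularity bookkeeping at $y=0$ for non-integer $\beta$ to be the main obstacle, since that is exactly where the two cases of $\mathfrak R(\mathcal L)$ separate.
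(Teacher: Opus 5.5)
Your proposal is correct, and it is organized differently from the paper's proof. The paper fixes $K=\frac12$, which turns \eqref{B formula} into the explicit relation $\frac{4\mu}{1+\gamma}y=-\bar z+(-\bar z)^{\frac{1}{1-\mu}}$. It then uses $\bar z$ as the independent variable and integrates the resulting separable ODE for $\phi_a$ to obtain \eqref{eigenfunction} directly; the identities \eqref{5.17} are stated there but not verified.

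You instead factor out the translation mode. Because $\mathcal L(\partial_y\bar z)=-\partial_y\bar z$ and $V\partial_y$ acts diagonally on powers of $-\bar z$, the Leibniz rule produces the eigenfunction $\partial_y\bar z\,(-\bar z)^{p}$ with $a=p(1-\mu)-1$. It is unique up to scaling because $V\ge(1-\mu)y>0$. This route is independent of $K$ and makes the correspondence $j=p$ transparent. Combined with $\mathcal L\bar z=\frac{\gamma+1}{4}\bar z\,\partial_y\bar z$ and $\mathcal L(y\partial_y\bar z)=(1-\mu)\bar z-y\partial_y\bar z$, it verifies all of \eqref{5.17} in a few lines, which the paper's route does not supply.

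The algebra you left unspelled does close. Differentiating the $K=\frac12$ relation gives $\partial_y\bar z=-\frac{4\mu(1-\mu)}{\gamma+1}\bigl(1-\mu+(-\bar z)^{\frac{\mu}{1-\mu}}\bigr)^{-1}$, so your $\phi_a$ is a constant multiple of \eqref{eigenfunction}.

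Your regularity selection reaches the same conclusion as the paper's terse argument but is more explicit. To make the expansion $\phi_a\sim y^p+y^{p+\beta-1}+\cdots$ hold with derivatives, use the implicit relation: it gives $-\bar z=y\,\Sigma(y^{\beta-1})$ with $\Sigma$ analytic and $\Sigma(0)\neq0$, hence $\phi_a=y^{p}G(y^{\beta-1})$ with $G$ analytic and $G(0)\neq 0$.

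You should also state two points explicitly. First, $a<-1$ is excluded because then $p<0$ and $\phi_a$ is unbounded at $y=0$. Second, the H\"older condition must be read locally near $y=0$, since $\phi_0$ grows like $y^{1-\mu}$ at infinity, just as $\bar z$ does.
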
   
\begin{proof}
We first recall \eqref{B formula}. By rewriting $y =K \frac{(\frac{2\mu}{1+\ga}-\bar U)^{\frac1\mu -1}}{ \bar U ^\frac1\mu}$ 
and taking $K=\frac12$, we obtain
\be
    \frac{4\mu}{1+\ga} y = -\bar z +(-\bar z)^{\frac{1}{1-\mu}}.
\ee
Consider the eigenvalue problem $ \mathcal L(\phi_a) = a \phi_a$ and we may derive 
\begin{align*}
    \frac{d\phi_a}{d\bar z} = \phi_a \frac{-(a+1)-(\frac{a}{1-\mu}+1)(-\bar z)^{\frac{\mu}{1-\mu}}}{(\mu-1)\bar z +(-\bar z)^\frac{1}{1-\mu}}.
\end{align*}
It then follows that
\be
    \phi_a \in \text{Span}\{\frac{(-\bar z)^{\frac{1+a}{1-\mu}}}{1-\mu+(-\bar z )^{\frac{\mu}{1-\mu}}}\}.
\ee
Let $a\le 0$ be given. In order for $\phi_a$ to be $\frac{1}{1-\mu}$-H\"{o}lder continuous, we have that $1+a = j (1-\mu)$ for $0\le j \le \frac{1}{1-\mu}$ and $j\in \mathbb N_{\ge 0}$. If  $\frac{1}{1-\mu}$ is an integer, the eigenfunctions are smooth and the unstable spectrum is given by $\{  -1 + j (1-\mu) : 0 \le j \le \frac{1}{1-\mu}  \}$.  
If $\frac{1}{1-\mu}$ is not an integer, $\phi_{-1}$ is only $\frac{\mu}{1-\mu}$-H\"{o}lder continuous. the unstable part of the spectrum is given by $\{  -1 + j (1-\mu) : 1 \le j \le [\frac{1}{1-\mu}] \}\cup \{0\}$. 
\end{proof}

\begin{remark}
     The freedom $K$ (see \eqref{lemma 3.4}) corresponds to the scaling invariance.  
     Notice that  we see the eigenvalues $a=-1$ and $a=\mu-1$ only when $\mu\in\ssmu$, namely when $\frac{1}{1-\mu}$ is an integer, since $\Lambda_{x_0} \bar z$ and $\Lambda_{v} \bar z$ in \eqref{5.17} 
belong to 
$C^{[\frac{1}{1-\mu}], \frac{1}{1-\mu} - [\frac{1}{1-\mu}]}
    $ only if $\mu\in\ssmu$ (cf. \eqref{eigenfunction}). 
     Moreover, when $\mu=\frac{1}{2}$, direct computation shows $\Lambda_v\bar z = -\frac{\ga+1}{4} \Lambda_T \bar z$.
     Therefore, when $\mu=\frac{1}{2}$, the eigenfunctions associated with time translation and Galilean Transformation coincide. 
\end{remark}

\section{On the regularity index $\ndmu$}\label{Appendix C}

We next consider the weighted operator 
$\mathcal L^m$ considered in Lemma \ref{weighted L^2 estimates}, 
\be
    \mathcal L^m \phi= (\mu-1)\phi +\frac{\ga+1}{4}\pa_y \bar z \phi +(y+\frac{\ga+1}{4}\bar z ) \pa_y \phi + m(1+\frac{\ga+1}{4}\frac{\bar z}{y})\phi.
\ee
In other words, $\mathcal L^m \phi = \frac{1}{y^m} \mathcal L (y^m \phi) $. Here $\phi$ can be thought of $\frac{Z}{y^m}$ for $m=\ndmu$ in the context of Lemma  \ref{weighted L^2 estimates}. It is clear that positive $m$ dampens the dynamics and shifts the spectrum of $\mathcal L$. 
In fact, if $\mathcal L^m  \phi_a = a\phi_a$, 
we obtain
\begin{align*}
    \frac{d\phi_a}{d \bar z} =  \phi_a \Big(\frac{-(a+1)-(\frac{a}{1-\mu}+1)(-\bar z)^{\frac{\mu}{1-\mu}}}{(\mu-1)\bar z +(-\bar z)^\frac{1}{1-\mu}}-m\frac{ -1-\frac{1}{1-\mu}(-\bar z)^{\frac{\mu}{1-\mu}} }{-\bar z +(-\bar z)^{\frac{1}{1-\mu}}}\Big).
\end{align*}
Hence
\beqa
    \mathcal L^m \phi_a  = a\phi_a,\quad \text{ with } \quad \phi_a = \frac{y^{-m}(-\bar z)^{\frac{1+a}{1-\mu}}}{1-\mu+(-\bar z )^{\frac{\mu}{1-\mu}}}.
\eeqa
When $y\ll 1$, we have
\be
    \phi_a \sim y^{\frac{1+a}{1-\mu}-m} + o(y^{\frac{1+a}{1-\mu}-m}).
\ee
In order to have $\phi_a\in L^2([0,1])$, the exponent should satisfy $\frac{1+a}{1-\mu}-m >-\frac12$, which is equivalent to $a> m(1-\mu)-\frac{3}{2}+\frac{\mu}{2}$. If $m=\lceil \beta+\frac12 \rceil$, then 
\be\label{a_lb}
   a> m(1-\mu) - (\beta+\frac12)(1-\mu) = ( \lceil \beta+\frac12 \rceil-(\beta+\frac12))(1-\mu).  
\ee
If $\beta+\frac12 \notin \mathbb N $, the values of $a$ satisfying \eqref{a_lb}  have a strict lower bound and thus $\mathcal L^m$ for $m={\lceil \beta+\frac12 \rceil}$ is strictly positive in $L^2$. On the other hand,   
when $\beta+\frac12 \in \mathbb N$, $ \lceil \beta+\frac12 \rceil-(\beta+\frac12)=0$. The condition \eqref{a_lb} reduces to $a>0$, and  
the admissible range of $a$ collapses to the threshold value $a=0$ without spectral gap. 
Consequently, the strict coercivity 
of $\mathcal L^m$ for $m={\lceil \beta+\frac12 \rceil}$ fails at those values of $\mu$, which act as critical parameters with respect to the strength of weight. Similarly, the same criticality emerges in the linear operator $\mathcal L_m = \partial_y^m \mathcal L$ for $m={\lceil \beta+\frac12 \rceil}$ when $\beta+\frac12 \in \mathbb N$ in $L^2$.  
To attain a strict coercivity at these critical values in the standard $L^2$-based spaces, we work with the regularity index $\ndmu$ as given in \eqref{def ndmu} rather than ${\lceil \beta+\frac12 \rceil}$. 

\begin{remark}
Requiring one more degree of regularity at the critical values ($\mu \in \mathcal H$) induces non-integrable singular behavior  (in the sense of $L^2$) of interaction terms between low modulation variables and the profile itself. This motivates us to introduce one more constraint $w_2=0$, which  removes such logarithmical non-integrable singular behavior. 
\end{remark}

\section{Time translation symmetry and modulation}\label{sec: TT} 

In this section, we discuss time translation symmetry and constraints in relation to the modulation coefficients $z_1$ and $w_1$. In our main theorem, we have taken $z_1=w_1\equiv 0$, which  
fix the blowup time of our solutions as $t=0$. To address more general cases, we may consider   
the modulation parameter,  denoted by $T(t)$ and define the self-similar variables (cf. \eqref{tau y}) by 
\be 
\tau = - \frac{\log \mu (T(t)- t)}{\mu}	\ \text{ and } \  y = \frac{x}{(T(t)-t)^\delta} 
\ee
where $T(t):\R\to \R$. The gradient blowup time is then defined by the $T_*$ such that $T(T_*)=T_*$. 
It is straightforward to compute
\[
	\pa_t\tau = e^{\mu \tau}( e^{\mu \tau}T_\tau +1)^{-1},\ \pa_t y =    e^{\mu \tau}( e^{\mu \tau}T_\tau +1)^{-1}y,\ \pa_x\tau=0,\ \pa_x y=(\frac{1}{\mu})^{-\delta} e^{ \tau}.
\]
By using the above identities, we see that the self-similar unknowns $(\hat z, \hat w)$ (cf. \eqref{selfsimilar ansatz} where $-t$ is replaced by $T(t)-t$),  
 satisfy 
\be\begin{cases}
	\partial_\tau \hat z + \Big(y+(e^{\mu \tau}T_\tau+1)\hat \la_1\Big) \partial_y \hat z + (\mu-1)\hat z= 0,\\
	\partial_\tau \hat w + \Big(y+(e^{\mu \tau}T_\tau+1)\hat \la_2 \Big) \partial_y \hat w + (\mu-1)\hat w = 0,
	\end{cases}
\ee
where $\la_1$ and $\la_2$ are defined in \eqref{def la1 la2}. Note that  \eqref{ss EE zw} is recovered if $T$ does not vary in time and indeed for our main result we have normalized $T\equiv 0$: see \eqref{tau y}. 
Then, $(\tilde z, \tilde w)$ satisfy
\begin{align}
		&\partial_\tau \tilde z + \Big(y+\frac{\ga+1}{4}(e^{\mu \tau}T_\tau+1)\mathring z \Big) \partial_y \tilde z + \Big(\frac{\ga+1}{4}(e^{\mu \tau}T_\tau+1)\partial_y \mathring z +\mu-1\Big)\tilde z +\frac{3-\ga}{4}(e^{\mu \tau}T_\tau+1)\partial_y\mathring z \ \tilde w \notag \\ 
        &\quad \ \  + \frac{\ga+1}{4}e^{\mu \tau}T_\tau \mathring z\pa_y \mathring z+(e^{\mu \tau}T_\tau+1)(\frac{\ga+1}{4}\tilde z+\frac{3-\ga}{4}\tilde w)\partial_y \tilde z=\frac{\ga+1}{4} \bar z (1-\chi) \pa_y \mathring z , \label{PDE of tilde z w with T*}\\
		&\partial_\tau \tilde w + \Big(y+\frac{3-\ga}{4}(e^{\mu \tau}T_\tau+1)\mathring z \Big) \partial_y \tilde w + (\mu-1)\tilde w +(e^{\mu \tau}T_\tau+1)(\frac{3-\ga}{4}\tilde z+\frac{\ga+1}{4}\tilde w)\partial_y \tilde w= 0.\notag  
\end{align}
Our stationary vacuum boundary conditions lead to 
\be\label{zero BC 1}
    \hat z(\tau,0) = \hat w(\tau,0) = \tilde z(\tau, 0) =\bar z(0)=0,\ \text{ for all }\tau>\tau_0.
\ee
The system of $(z_0,w_0)$ is trivial due to the boundary condition \eqref{zero BC 1}. 
We then check the system for $(z_1,w_1)$
\begin{align}
		&\partial_\tau z_1 -\mu z_1  -\frac{(3-\ga)\mu}{\ga+1}w_1  +(\frac{\ga+1}{4}z_1 +\frac{3-\ga}{4}w_1 )z_1 \notag  \\ 
        &\ \ +\Big(-2\mu z_1 -\frac{(3-\ga)\mu}{\ga+1}w_1+ \frac{4\mu^2}{\ga+1}+(\frac{\ga+1}{4}z_1 +\frac{3-\ga}{4}w_1 )z_1\Big)e^{\mu \tau}T_\tau = 0, \label{ODE T*}\\
		&\partial_\tau w_1  +\frac{2(\ga-1)\mu}{\ga+1} w_1  +\Big(\frac{3-\ga}{4}z_1+\frac{\ga+1}{4}w_1 \Big)w_1+\Big(-\frac{(3-\ga)\mu}{\ga+1}+\frac{3-\ga}{4}z_1+\frac{\ga+1}{4}w_1\Big)e^{\mu \tau}T_\tau w_1 = 0. \notag 
\end{align}

Observe that the first $z_1$ equation of \eqref{ODE T*} has a non-degenerate coefficient in front of $T_\tau$. We then demand 
\be\label{z1=0}
z_1 (\tau)=0  \ \text{ for all } \ \tau\ge \tau_0. 
\ee
    With \eqref{z1=0}, the first equation in \eqref{ODE T*} gives rise to 
\be\label{ode of T'}
	e^{\mu \tau}T_\tau+1 
    = \frac{\mu}{\mu-\frac{3-\ga}{4}w_1}.
\ee
Conversely, observe that  if \eqref{ode of T'} holds and if $z_1(\tau_0)=0$, \eqref{z1=0} is dynamically preserved. We remark that the initial vanishing condition $z_1(\tau_0)=0$ is not a restriction. In fact due to the time translation symmetry, for given initial $z$-slope $\pa_x z (t_0,0)$, one can find $T(t_0)$ such that $\pa_x z (t_0,0) = \pa_x \bar z (t_0, 0)$ where $\bar z (t_0,x) = \delta (T(t_0)-t_0)^{\delta-1} \bar z (\frac{x}{ (T(t_0)-t_0)^{\delta} })$ (cf. \eqref{selfsimilar ansatz}), which will give rise to $z_1(\tau_0)=0$. 

From \eqref{ODE T*} and \eqref{ode of T'}, we see that
\begin{align}\label{ode w_1}
	\partial_\tau w_1  + \Big(1-\frac{3-\ga}{\ga+1}\frac{\mu}{\mu-\frac{3-\ga}{4}w_1}\Big) \mu w_1  +\frac{(\ga+1)\mu}{4\mu-(3-\ga)w_1}\big(w_1\big)^2&= 0.
\end{align}
We first show the following properties of $w_1$: 
\begin{lemma}\label{lemma w_1}
	Let $\ga\in(1,3)$ and $\mu\in[\frac12,1)$. Suppose that the initial data satisfy $w_1(\tau_0) = \epsilon_0$ with $\epsilon_0>0$ sufficiently small. Then the solution to \eqref{ode w_1} exists for all $\tau>\tau_0$, is strictly decreasing to zero, and satisfies the exponential decay estimate
	\be\label{estimate of w_1}
	0<w_1(\tau)< e^{\al^2 \tau_0}w_1(\tau_0)e^{-\al^2 \tau},\qquad \tau \geq \tau_0,
	\ee
for some constant $\alpha \neq 0$. Furthermore, if $w_1(\tau_0) = 0$, then $w_1 \equiv 0$ for all $\tau > \tau_0$.
\end{lemma}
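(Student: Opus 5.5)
The plan is to treat \eqref{ode w_1} as a scalar autonomous ODE $\partial_\tau w_1=-F(w_1)$, with
\[
F(w):=\Big(1-\frac{3-\ga}{\ga+1}\,\frac{\mu}{\mu-\frac{3-\ga}{4}w}\Big)\mu w+\frac{(\ga+1)\mu}{4\mu-(3-\ga)w}\,w^2 .
\]
Everything should follow from a linearization at $w=0$ combined with a comparison (Gronwall) argument.

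First I check regularity and the sign of $F$ near $0$. Since $\mu\ge\frac12$ and $0<3-\ga<2$, the denominators $\mu-\frac{3-\ga}{4}w$ and $4\mu-(3-\ga)w$ are bounded below by a positive constant for $|w|\le w_*$ small. Hence $F$ is smooth, in particular locally Lipschitz, on $[-w_*,w_*]$. Moreover $F(0)=0$ and
\[
F'(0)=\mu\Big(1-\frac{3-\ga}{\ga+1}\Big)=\frac{2(\ga-1)\mu}{\ga+1}=:\kappa>0,
\]
where positivity uses $\ga>1$. The quadratic term is nonnegative for $0\le w\le w_*$. By Taylor's theorem, for any fixed $\al^2\in(0,\kappa)$ I can choose $\epsilon_0\le w_*$ so small that $F(w)>\al^2 w$ for all $w\in(0,\epsilon_0]$.

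Next comes the qualitative behaviour. By Picard--Lindel\"of there is a unique local solution. Since $w\equiv0$ is a solution, uniqueness gives two consequences. If $w_1(\tau_0)=0$, then $w_1\equiv0$, which is the last claim of the lemma. If $w_1(\tau_0)=\epsilon_0>0$, then $w_1$ can never reach $0$ in finite time, so $w_1>0$ on its interval of existence. On $(0,\epsilon_0]$ we have $\partial_\tau w_1=-F(w_1)<0$, so $w_1$ is strictly decreasing and remains trapped in $(0,\epsilon_0]$, where $F$ is bounded. Therefore the solution cannot blow up and extends to all $\tau\ge\tau_0$.

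Finally, the decay estimate. On the trapping interval $\partial_\tau w_1=-F(w_1)<-\al^2 w_1$, so
\[
\partial_\tau\big(e^{\al^2\tau}w_1\big)<0 ,
\]
which gives $w_1(\tau)<e^{\al^2\tau_0}w_1(\tau_0)e^{-\al^2\tau}$ for $\tau>\tau_0$. This is \eqref{estimate of w_1}, and it forces $w_1\to0$. The only step needing care is choosing $\epsilon_0$ so that the strict inequality $F(w)>\al^2 w$ holds uniformly on $(0,\epsilon_0]$. This is guaranteed because $\kappa>0$ strictly, and the second-order remainder in $F$ is nonnegative or $O(w^2)$.
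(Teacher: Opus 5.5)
Your proof is correct and follows the same route as the paper. Both arguments use Picard--Lindel\"of for local existence, monotonicity and trapping near $0$ to extend the solution globally, uniqueness against the zero solution, and the differential inequality $\pa_\tau w_1<-\al^2 w_1$ integrated with the factor $e^{\al^2\tau}$. Your version is somewhat more explicit. You identify an admissible rate $\al^2<F'(0)=\frac{2(\ga-1)\mu}{\ga+1}$, and you use uniqueness to justify $w_1>0$, which the paper only asserts. You also rightly restrict the strict inequality to $\tau>\tau_0$, since equality holds at $\tau=\tau_0$.
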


\begin{proof}
	We will prove it by a continuity argument. Since the $w_1(0)=\epsilon_0$ is sufficiently small, $\mu-\frac{3-\ga}{4}w_1(0)>0$ initially. Consequently, $\pa_\tau w_1(\tau_0)<0$. By the Picard–Lindelöf theorem, there exists $\nu_0 > 0$ such that the solution to the ODE \eqref{ode w_1} with initial condition $w_1(\tau_0) = \epsilon_0$ exists on the interval $\tau \in [\tau_0,\tau_0+\nu_0)$. Moreover, $\nu_0$ can be chosen sufficiently small so that $w_1(\tau_0)-w_1(\tau)\ll 1$. It then follows that 
	\[
		1-\frac{3-\ga}{\ga+1}\frac{\mu}{\mu-\frac{3-\ga}{4}w_1}>0, \ w_1\in(0,\epsilon) \text{ for }\tau\in[\tau_0,\tau_0+\nu_0).
	\]
	In particular 
	\[
		\pa_\tau w_1<0\text{ for }\tau\in[\tau_0,\tau_0+\nu_0).
	\]
	By a standard continuity argument, the solution can be extended to all $\tau \in [\tau_0,\infty)$, and $w_1$ is strictly decreasing. Furthermore, note that $w_1 \equiv 0$ is itself a solution to the ODE \eqref{ode w_1}. By uniqueness of solutions guaranteed by the Picard–Lindelöf theorem, this establishes the second part of the statement. Now, we remain to show \eqref{estimate of w_1}. Since $w_1\in[0,\epsilon_0)$. Then, there exists an nonzero real number $\al$ such that
	\[
		\pa_\tau w_1 < -\al^2 w_1.
	\]
	It then follows 
	\[
		w_1(\tau)< w_1(\tau_0)e^{-\al^2 (\tau-\tau_0)}.
	\]
	This completes the proof.
\end{proof}
Now, back to \eqref{ode of T'}, we have the following property of the blowup time $T_*$. Assume $T(t_0)=0$. 
\begin{lemma}
	Let $z_1\equiv 0$ and $w_1(\tau_0)=\epsilon_0$ to be sufficiently small, we have
	\be
		T_*  < 2\epsilon_0 . 
	\ee
\end{lemma}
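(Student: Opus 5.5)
The plan is to integrate the modulation equation \eqref{ode of T'} in the self-similar time $\tau$ and use the decay of $w_1$ from Lemma \ref{lemma w_1}.

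First I rewrite \eqref{ode of T'} with $\mathtt a := \frac{3-\ga}{4}\in(0,\frac12)$ as
\[
T_\tau = e^{-\mu\tau}\Big(\frac{\mu}{\mu-\mathtt a w_1}-1\Big)= e^{-\mu\tau}\,\frac{\mathtt a\, w_1(\tau)}{\mu-\mathtt a\, w_1(\tau)} .
\]
By Lemma \ref{lemma w_1}, $0\le w_1(\tau)\le \epsilon_0 e^{-\al^2(\tau-\tau_0)}\le \epsilon_0$. Since $\epsilon_0$ is small, $\mu-\mathtt a w_1\ge \mu/2$. Hence $0\le T_\tau\le \frac{2\mathtt a}{\mu}\,\epsilon_0\, e^{-\mu\tau}$, and this is integrable on $[\tau_0,\infty)$. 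So $T$ is nondecreasing in $\tau$ and has a finite limit as $\tau\to\infty$.

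Next I identify this limit with the blowup time. By definition $e^{-\mu\tau}=\mu\,(T(t)-t)$, so $\tau\to\infty$ exactly when $T(t)-t\to0$, i.e.\ when $t$ reaches the fixed point $T_*=T(T_*)$. The relation $\frac{d\tau}{dt}=e^{\mu\tau}(e^{\mu\tau}T_\tau+1)^{-1}>0$ shows that $\tau$ is strictly increasing in $t$, so $t\mapsto\tau$ is a monotone reparametrization. Using $T(t_0)=0$, this gives
\[
T_*=\lim_{\tau\to\infty}T=\int_{\tau_0}^\infty T_\tau\,d\tau\le \frac{2\mathtt a}{\mu}\,\epsilon_0\int_{\tau_0}^\infty e^{-\mu\tau}\,d\tau=\frac{2\mathtt a}{\mu^2}\,\epsilon_0\, e^{-\mu\tau_0}.
\]
The same formula also gives $T_*\ge0$.

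Finally I bound the constant. With $\mathtt a<\frac12$ and $\mu\ge\frac12$, we get $\frac{2\mathtt a}{\mu^2}<4$. Also $e^{-\mu\tau_0}=\mu(T(t_0)-t_0)=\mu|t_0|$, which is at most $\frac12$ once $\tau_0$ is large, i.e.\ $|t_0|$ small, as in Theorem \ref{thm 1.1}. Therefore $T_*<4\epsilon_0\cdot\frac12=2\epsilon_0$. If $\epsilon_0$ is exactly the initial datum $w_1(\tau_0)$ rather than an upper bound, Lemma \ref{lemma w_1} gives strict decrease of $w_1$, so every inequality above is strict; the case $w_1(\tau_0)=0$ gives $T\equiv0=T_*$.

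The main obstacle is the justification that $\lim_{\tau\to\infty}T$ really equals the blowup time $T_*$. This requires $e^{\mu\tau}T_\tau+1$ to stay positive, so that $\tau(t)$ is a valid change of variables, and it requires the solution to persist up to $\tau=\infty$. The first point follows from the lower bound $\mu-\mathtt a w_1\ge\mu/2$. The second is the global existence part of Lemma \ref{lemma w_1}, together with the bootstrap of Section \ref{section 5} in the modulated variables; I would take that as given.
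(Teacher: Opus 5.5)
Your proof is correct and follows the paper's argument: write $T_\tau=e^{-\mu\tau}\frac{\frac{3-\ga}{4}w_1}{\mu-\frac{3-\ga}{4}w_1}$ from \eqref{ode of T'}, bound $w_1$ by $\epsilon_0$ using the monotone decay in Lemma~\ref{lemma w_1}, and integrate $e^{-\mu\tau}$ over $[\tau_0,\infty)$. The only difference is in the constants. The paper uses monotonicity of $x\mapsto x/(\mu-x)$ to get the sharper factor $\frac{\frac{3-\ga}{4}\epsilon_0}{\mu-\frac{3-\ga}{4}\epsilon_0}$ and effectively bounds $e^{-\mu\tau_0}$ by $1$; you instead use the cruder bound $\mu-\frac{3-\ga}{4}w_1\ge\mu/2$ and recover the constant $2$ by keeping $e^{-\mu\tau_0}=\mu|t_0|\le\frac12$.
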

\begin{proof}
	By using \eqref{ode of T'}, we compute
	\[
		T_\tau = e^{-\mu\tau} \frac{\frac{3-\ga}{4}w_1}{\mu-\frac{3-\ga}{4}w_1}<e^{-\mu\tau} \frac{\frac{3-\ga}{4}\epsilon_0}{\mu-\frac{3-\ga}{4}\epsilon_0},
	\]
	where we have used that $f(x) = \frac{x}{\mu-x}$ is an increasing function for $x\in[0,\mu)$ and $w_1$ is strictly decreasing from $\epsilon_0$ to 0 as proved in Lemma \ref{lemma w_1}. Hence, 
	\be
		T_*  = \int_{\tau_0}^\infty e^{-\mu\tau} \frac{\frac{3-\ga}{4}w_1(\tau)}{\mu-\frac{3-\ga}{4}w_1(\tau)} \ d\tau < \int_{\tau_0}^\infty e^{-\mu\tau} \frac{\frac{3-\ga}{4}\epsilon_0}{\mu-\frac{3-\ga}{4}\epsilon_0} \ d\tau = \frac{\frac{3-\ga}{4}\epsilon_0}{\mu^2-\frac{3-\ga}{4}\epsilon_0\mu}< 2\epsilon_0,
	\ee
	where we have used $\ga\in(1,3)$ and $\mu\in[\frac12,1)$ in the last inequality.
\end{proof}

Therefore, we see that the condition $z_1=w_1\equiv 0$ fixes the blowup time as $t=0$. Moreover, using the natural constraint $z_1\equiv 0$, we may allow nonvanishing $w_1(\tau_0)>0$, which determines the blowup time $T_*$.

\end{document}